\documentclass[11pt]{article}
\usepackage[letterpaper,margin=1in]{geometry}
\usepackage{amsmath,amssymb,amsthm,mathtools}
\usepackage{booktabs}
\usepackage{tabularx,array}
\usepackage{enumitem}
\usepackage{microtype}
\usepackage{needspace}
\usepackage{algorithm}
\usepackage{algpseudocode}
\usepackage{bm}
\usepackage{graphicx}
\usepackage{placeins}
\usepackage{float}
\usepackage{xurl}
\usepackage[hidelinks]{hyperref}

\DeclareMathSizes{6.6}{6.6}{5}{5}

\hypersetup{
  pdftitle={A Fixed-Penalty Linearized Augmented Lagrangian Method with Classical Multiplier Updates},
  pdfauthor={Benqi Liu, Kangkang Deng, Zichen Wang, and Zaiwen Wen}
}

\newtheorem{theorem}{Theorem}[section]
\newtheorem{lemma}[theorem]{Lemma}
\newtheorem{proposition}[theorem]{Proposition}
\newtheorem{corollary}[theorem]{Corollary}
\newtheorem{assumption}[theorem]{Assumption}
\theoremstyle{definition}
\newtheorem{definition}[theorem]{Definition}
\theoremstyle{remark}

\numberwithin{equation}{section}
\numberwithin{figure}{section}
\numberwithin{table}{section}
\numberwithin{algorithm}{section}

\newenvironment{keywords}{%
  \par\medskip\noindent\textbf{Keywords.}\ }{\par}
\newenvironment{MSCcodes}{%
  \par\smallskip\noindent\textbf{Mathematics Subject Classification.}\ }{\par\medskip}

\newcommand{\R}{\mathbb{R}}
\newcommand{\E}{\mathbb{E}}
\newcommand{\cL}{\mathcal{L}}
\newcommand{\cO}{\mathcal{O}}
\newcommand{\eps}{\varepsilon}
\newcommand{\norm}[1]{\left\lVert #1\right\rVert}
\newcommand{\ip}[2]{\left\langle #1,#2\right\rangle}
\newcommand{\grad}{\nabla}
\newcommand{\Proj}{\operatorname{Proj}}

\newcommand{\leanLinkText}[1]{%
  \underline{\textbf{#1}}%
}
\newcommand{\leanDeclTag}[2]{%
  {\footnotesize\texttt{[Lean source, }}%
  \href{https://github.com/optpku/ReasBook/blob/v4.32.2/ReasBook/Papers/TR_LALM_theory/#1?plain=1\#L#2}%
    {\leanLinkText{\footnotesize\texttt{L#2}}}%
}
\newcommand{\leanDeclTagExtra}[2]{%
  \unskip
  {\footnotesize\texttt{, }}%
  \href{https://github.com/optpku/ReasBook/blob/v4.32.2/ReasBook/Papers/TR_LALM_theory/#1?plain=1\#L#2}%
    {\leanLinkText{\footnotesize\texttt{L#2}}}\allowbreak
}
\newcommand{\leanDeclTagEnd}{%
  \unskip
  {\footnotesize\texttt{]}}\enspace
}

\title{A Fixed-Penalty Linearized Augmented Lagrangian Method with Classical Multiplier Updates}
\author{%
Benqi Liu$^{1}$ \qquad Kangkang Deng$^{2}$ \qquad
Zichen Wang$^{3}$ \qquad Zaiwen Wen$^{1,*}$\\[0.7em]
\small $^{1}$Beijing International Center for Mathematical Research,
Peking University, Beijing 100871, China\\[-0.1em]
\small \href{mailto:bqliu@pku.edu.cn}{\texttt{bqliu@pku.edu.cn}},
\href{mailto:wenzw@pku.edu.cn}{\texttt{wenzw@pku.edu.cn}}\\
\small $^{2}$College of Science, National University of Defense Technology,
Changsha 410073, China\\[-0.1em]
\small \href{mailto:freedeng1208@gmail.com}{\texttt{freedeng1208@gmail.com}}\\
\small $^{3}$School of Mathematical Sciences, Peking University,
Beijing 100871, China\\[-0.1em]
\small \href{mailto:zichenwang25@stu.pku.edu.cn}{\texttt{zichenwang25@stu.pku.edu.cn}}\\
\small $^{*}$Corresponding author: Zaiwen Wen
}
\date{}

\begin{document}
\maketitle

\begin{abstract}
Augmented Lagrangian methods are effective for nonlinear equality-constrained
optimization, but solving their nonlinear primal subproblems can be expensive.
For smooth nonconvex problems with deterministic or stochastic objectives, we
propose a nonlinear-residual linearized augmented Lagrangian method (NR-LALM)
that replaces this subproblem by a regularized Gauss--Newton-type step while
retaining the classical multiplier update based on the nonlinear constraint
residual.  The resulting step is computed from one symmetric positive-definite
linear system, but the mismatch between the linearized primal model and the
nonlinear-residual update produces a quadratic constraint-linearization error
in the multiplier identity.  We show that this error can be controlled under
local regularity; multiplier boundedness and trajectory localization are
derived rather than assumed.  With fixed, accuracy-independent parameters,
deterministic NR-LALM finds an $\varepsilon$-approximate
Karush--Kuhn--Tucker (KKT) pair in $\mathcal O(\varepsilon^{-2})$ iterations
and first-order oracle evaluations.  For stochastic objectives, a projected
stochastic path-integrated differential estimator with safeguarded restarts
requires, in expectation, $\mathcal O(\varepsilon^{-3})$ stochastic-gradient
evaluations and $\mathcal O(\varepsilon^{-2})$ constraint and Jacobian
evaluations.  Compactness and a Kurdyka--{\L}ojasiewicz condition further yield
finite-length convergence of the deterministic primal--dual sequence.  An
optional minimum-norm second-order correction reduces the
constraint-linearization error from second to fourth order without changing
the complexity orders. All theoretical results are formalized in Lean~4.  Numerical experiments confirm the predicted error
orders and show favorable performance on high-dimensional deterministic and
stochastic problems.  
\end{abstract}

\begin{keywords}
augmented Lagrangian method, nonlinear equality constraints, fixed penalty parameter,
iteration complexity, stochastic optimization
\end{keywords}

\begin{MSCcodes}
90C30, 90C15, 65K05
\end{MSCcodes}

\section{Introduction}
\label{sec:introduction}

We study the nonlinear equality-constrained problem
\begin{equation}
    \min_{x\in\R^n} f(x)
    \qquad \text{subject to}\qquad c(x)=0,
    \label{eq:problem}
\end{equation}
where $f:\R^n\to\R$ may be nonconvex and $c:\R^n\to\R^m$ may be nonlinear; we also allow stochastic objectives $f(x)=\E[F(x;\xi)]$ with deterministic constraints.  We write $\grad c(x):=(\grad c_1(x),\ldots,\grad c_m(x))\in\R^{n\times m}$. Introduced by Hestenes and Powell in 1969~\cite{Hestenes1969,Powell1969}, the augmented Lagrangian method (ALM) has become a standard framework for constrained optimization~\cite{Bertsekas1982,NocedalWright2006,Rockafellar1973}.
For $\rho>0$, ALM approximately minimizes the augmented Lagrangian before applying $\lambda_{k+1}=\lambda_k+\rho c(x_{k+1})$, but its primal subproblem is generally nonlinear and expensive.  Linearizing the smooth terms with a quadratic proximal term reduces it to one symmetric positive-definite system.  With $p_k:=x_{k+1}-x_k$, local Lipschitz continuity of $\grad c$ gives
$c(x_{k+1})=c(x_k)+\grad c(x_k)^\top p_k+\cO(\norm{p_k}^2)$.
The classical update therefore yields a quadratically perturbed multiplier identity rather than the exact identity of a linearized-residual update. Alacaoglu and Wright~\cite{AlacaogluWright2024} asked whether deterministic ALM for nonlinearly constrained nonconvex problems can attain the best-known $\cO(\eps^{-3})$ complexity with a fixed, accuracy-independent penalty and
large dual steps.  We ask whether this perturbation can be controlled without assumed multiplier or trajectory bounds while retaining complexity guarantees in the deterministic and stochastic settings.

\subsection{A linearized ALM with nonlinear-residual updates}

We propose the nonlinear-residual linearized augmented Lagrangian method (NR-LALM).  For fixed $\rho,\beta>0$, its deterministic iteration is
$$
\begin{aligned}
\bigl(\beta I_n+\rho\grad c(x_k)\grad c(x_k)^\top\bigr)p_k
&=
-\grad f(x_k)-\grad c(x_k)\bigl(\lambda_k+\rho c(x_k)\bigr),\\
x_{k+1}&=x_k+p_k,
\qquad
\lambda_{k+1}=\lambda_k+\rho c(x_{k+1}).
\end{aligned}
$$
The coefficient matrix is symmetric positive definite (SPD), and each iteration uses one objective gradient, one constraint Jacobian, one new constraint residual, and one SPD solve, with no nonlinear inner subproblem. Let $d_k:=c(x_{k+1})-c(x_k)-\grad c(x_k)^\top p_k$.  Local Lipschitz continuity gives $\norm{d_k}=\cO(\norm{p_k}^2)$, and the primal equation yields
$\grad f(x_k)+\grad c(x_k)\lambda_{k+1}+\beta p_k=\rho\grad c(x_k)d_k$. Thus the nonlinear-residual update creates a quadratic perturbation that must be controlled without assuming multiplier or trajectory boundedness.  Our contributions are as follows.
\begin{enumerate}[leftmargin=1.8em,labelsep=0.5em,itemsep=0em,topsep=0.1em]
    \item We give an affirmative answer, within the linearized augmented Lagrangian framework, to the open question raised by Alacaoglu and Wright~\cite{AlacaogluWright2024} concerning a fixed, accuracy-independent penalty and large dual steps.  For smooth nonconvex problems with nonlinear equality constraints, deterministic NR-LALM retains the classical multiplier update and finds an $\varepsilon$-approximate Karush--Kuhn--Tucker (KKT) pair in $\cO(\varepsilon^{-2})$ iterations and first-order oracle calls.  This improves on the $\cO(\varepsilon^{-3})$ target in that question.

    \item We derive, rather than assume, multiplier boundedness and trajectory localization.  Under local smoothness, a uniform linear independence constraint qualification (LICQ), and the stated parameter and localization conditions, we control the quadratic perturbation in the multiplier identity and establish Lyapunov descent, from which both conclusions follow.  Under compactness and a Kurdyka--{\L}ojasiewicz (KL) condition, we further show that the full primal--dual sequence has finite length and converges to a KKT pair.

    \item For stochastic objectives, a projected stochastic path-integrated differential estimator (SPIDER) with safeguarded restarts yields expected $\cO(\varepsilon^{-3})$ stochastic-gradient complexity and $\cO(\varepsilon^{-2})$ constraint/Jacobian complexity.  We also introduce an optional minimum-norm second-order correction (SOC), which reduces the
    constraint linearization error from second to fourth order.  Replacing the base constants by their corrected counterparts extends the deterministic and stochastic results to NR-LALM+SOC; its sufficient parameter region can
    strictly contain that of NR-LALM.

    \item We formally verify all theoretical results in the manuscript using Lean~4 and manually check that each formal statement faithfully represents its natural-language counterpart.  The complete Lean development is available in the public repository%
     \footnote{\href{https://github.com/optpku/ReasBook/tree/v4.32.2/ReasBook/Papers/TR_LALM_theory}%
      {\nolinkurl{https://github.com/optpku/ReasBook/tree/v4.32.2/ReasBook/Papers/TR_LALM_theory}}} and can also be checked through the ReasLab project%
      \footnote{\href{https://reaslab.io/share/w58YAU-8Rh-x_M7Xshn5WwR6df839.MTc.YWxs}%
      {\nolinkurl{https://reaslab.io/share/w58YAU-8Rh-x_M7Xshn5WwR6df839.MTc.YWxs}}}. For principal results, the manuscript includes direct links to the corresponding Lean declarations.
\end{enumerate}

\subsection{Related work}

Deterministic nonlinear ALM analyses include inexact frameworks
\cite{SahinEtAl2019,GrapigliaYuan2021,BirginMartinez2020} and the proximal ALM (ProxAL)~\cite{XieWright2021}, all of which retain nonlinear primal subproblems.  The linearized augmented Lagrangian method (L-AL)~\cite{BourkhissiNecoara2025} is closest: its fully linearized model uses backtracking, dynamic regularization, and a linearized-residual update. Perturbed linearized ALM employs a
scaled dual update for nonsmooth composite objectives~\cite{ElBourkhissiEtAl2025}, while Zhu and Li~\cite{ZhuLi2026} study related first-order schemes under local
regularity.  NR-LALM retains the classical update based on
$c(x_{k+1})$ and therefore produces the quadratic perturbation analyzed in Section~\ref{sec:deterministic}.

For stochastic objectives, the single-loop quadratic penalty method (QPM) uses no multiplier recursion and an increasing penalty, attaining $\widetilde{\cO}(\eps^{-4})$ sample complexity~\cite{AlacaogluWright2024}. The momentum-based linearized augmented Lagrangian method (MLALM) combines
recursive momentum with a nonlinear-residual update~\cite{ShiWangWang2025}, whereas the Fletcher stochastic ALM uses a multiplier map and tangential--normal steps~\cite{CuiEtAl2026}.  Further alternatives include
variance-reduced penalties under a global error bound~\cite{LuMeiXiao2026} and stochastic sequential quadratic programming~\cite{BerahasEtAl2021,CurtisONeillRobinson2024}.
Table~\ref{tab:intro-comparison} summarizes the differences;
$\widetilde{\cO}$ hides logarithmic factors, D and S denote deterministic and stochastic settings, CG denotes conjugate gradient, and stochastic bounds are in expectation.

\begin{table}[H]
\caption{Comparison with representative methods.}
\label{tab:intro-comparison}
\centering
\fontsize{6.8pt}{7.0pt}\selectfont
\setlength{\tabcolsep}{1.1pt}
\renewcommand{\arraystretch}{0.72}
\begin{tabular}{@{}>{\raggedright\arraybackslash}p{0.125\textwidth}
>{\raggedright\arraybackslash}p{0.165\textwidth}
>{\centering\arraybackslash}p{0.10\textwidth}
>{\raggedright\arraybackslash}p{0.115\textwidth}
>{\raggedright\arraybackslash}p{0.23\textwidth}
>{\raggedright\arraybackslash}p{0.225\textwidth}@{}}
\toprule
Method & Primal update & Dual update & Penalty & Complexity & Main assumptions \\
\midrule
ProxAL~\cite{XieWright2021}
& Inexact nonlinear solve
& Nonlinear $c(x_{k+1})$
& $\rho=\Omega(\eps^{-\eta})$, $\eta\in[0,2]$
& D: $\cO(\eps^{\eta-2})$ outer;
  $\cO(\eps^{-11/2})$ Newton--CG ($\eta=1$)
& Compact penalized level sets; uniform LICQ
\\ \hline
L-AL~\cite{BourkhissiNecoara2025}
& One SPD solve per trial
& Linearized residual
& Fixed $\rho$
& D: $\cO(\sqrt{\rho}\,\eps^{-2})$ Jacobian evaluations
& Compact penalized level sets; uniform LICQ
\\ \hline
Single-loop QPM~\cite{AlacaogluWright2024}
& Projected stochastic-gradient step
& None
& Increasing $\rho_k\asymp k^{1/4}$
& S: $\widetilde{\cO}(\eps^{-4})$
& Mean-squared smoothness; bounded data; uniform regularity
\\ \hline
MLALM~\cite{ShiWangWang2025}
& Stochastic proximal-gradient step
& Nonlinear $c(x_{k+1})$
& Runwise fixed; horizon-dependent
& S: $\cO(\eps^{-4})$; $\cO(\eps^{-3})$ with
  $\sqrt\eps$-feasible start and $\cO(\eps)$ initial error
& Mean-squared smoothness; constraint qualification
\\ \hline
Variance-reduced penalty~\cite{LuMeiXiao2026}
& Projected stochastic penalty-gradient step
& None
& Increasing $\rho_k$
& S: $\widetilde{\cO}(\eps^{-3})$ (recursive) or
  $\widetilde{\cO}(\eps^{-4})$ (Polyak), $\theta=1$
& Global error bound ($\theta\ge1$); average or Lipschitz smoothness
\\ \hline
Fletcher stochastic ALM~\cite{CuiEtAl2026}
& Stochastic tangential--normal step
& Map $\lambda(x)$; no recursion
& Adaptive merit parameter
& S: $\cO(\eps^{-3})$ total stochastic-oracle calls
& Mean-squared smoothness; Lipschitz Hessians; strong LICQ
\\ \hline
NR-LALM (this paper)
& One SPD solve per iteration
& Nonlinear $c(x_{k+1})$
& Fixed; accuracy-independent
& D: $\cO(\eps^{-2})$ iterations and first-order calls;
  S: $\cO(\eps^{-3})$ gradients and
  $\cO(\eps^{-2})$ constraint/Jacobian calls
& Local smoothness/uniform LICQ; regularity-region buffer and initialization;
  mean-squared smoothness; exact-set restart safeguard
\\
\bottomrule
\end{tabular}
\end{table}

\subsection{Organization}

Section~\ref{sec:deterministic} develops the deterministic analysis, and Section~\ref{sec:stochastic} treats the stochastic method.  The optional correction is studied in Section~\ref{sec:correction}; numerical results and
conclusions appear in Sections~\ref{sec:numerics} and~\ref{sec:conclusion}.

\section{Deterministic linearized augmented Lagrangian method}
\label{sec:deterministic}

This section develops the deterministic analysis of NR-LALM.  We first define approximate stationarity, state the local regularity assumptions, and present the algorithm and its preliminary estimates.  We then establish Lyapunov descent and trajectory localization, derive the $\cO(\varepsilon^{-2})$ complexity bound, and prove full-sequence convergence under compactness and a KL condition.

\subsection{Stationarity and local regularity assumptions}

Unless otherwise stated, vector norms are Euclidean and matrix norms are spectral.  For $\rho>0$ and $\lambda\in\R^m$, define
$\cL_\rho(x,\lambda):=f(x)+\ip{\lambda}{c(x)} +(\rho/2)\norm{c(x)}^2$. The following assumptions are imposed on an open set \(\mathcal U\).  We do not assume that the generated iterates lie in this set; that property is proved in Theorem~\ref{thm:localization}.

\begin{assumption}
\label{ass:smooth}
\leanDeclTag{Assumption_2_1/Regularity.lean}{34}
\leanDeclTagEnd
There exist a nonempty open set $\mathcal U\subseteq\R^n$ on which \(f\) and
\(c\) are continuously differentiable, a scalar $\underline{f}\in\R$, and
positive constants $G,L_f,M,L_c,\sigma$ with the following properties:
\begin{enumerate}[label=\textup{(\roman*)},leftmargin=2.5em,itemsep=0.15em,topsep=0.25em]
\item \(f(x)\ge \underline{f}\), \(\norm{\grad f(x)}\le G\), and
      \(\norm{\grad f(x)-\grad f(y)}\le L_f\norm{x-y}\) for all
      \(x,y\in\mathcal U\);
\item \(\norm{\grad c(x)}\le M\) and
      \(\norm{\grad c(x)-\grad c(y)}\le L_c\norm{x-y}\) for all
      \(x,y\in\mathcal U\);
\item $\norm{\grad c(x)u}\ge\sigma\norm{u}$ for all
      $x\in\mathcal U$ and $u\in\R^m$.
\end{enumerate}
\end{assumption}

Condition~(iii) is uniform LICQ on $\mathcal U$ and, in particular,
implies $m\le n$. The first-order KKT conditions are \(\grad f(x)+\grad c(x)\lambda=0\) and \(c(x)=0\). Since approximate stationarity depends on the multiplier, we distinguish
a primal point that admits an approximate multiplier from an explicitly specified primal--dual pair, following standard terminology for nonlinear equality constraints~\cite{XieWright2021,BourkhissiNecoara2025}.

\begin{definition}
\label{def:eps-kkt}
\leanDeclTag{Definition_2_2/KKT.lean}{31}
\leanDeclTagEnd
Given \(\varepsilon\ge0\), a point \(x\) is an
\(\varepsilon\)-KKT point of problem~\eqref{eq:problem} if there exists
\(\lambda\in\mathbb{R}^m\) such that
$$
\norm{\grad f(x)+\grad c(x)\lambda}\le\varepsilon,
\qquad
\norm{c(x)}\le\varepsilon.
$$
A specified pair \((x,\lambda)\) satisfying these inequalities is called
an \(\varepsilon\)-KKT pair.  For \(\varepsilon=0\), we call \(x\) a
KKT point and \((x,\lambda)\) a KKT pair.
\end{definition}

For subsequent estimates, define the aggregate KKT residual
\begin{equation}
    \mathcal R(x,\lambda)
    :=\left(\norm{\grad f(x)+\grad c(x)\lambda}^2
       +\norm{c(x)}^2\right)^{1/2}.
    \label{eq:kkt-residual}
\end{equation}
Hence \(\mathcal R(x,\lambda)\le\varepsilon\) implies that
\((x,\lambda)\) is an \(\varepsilon\)-KKT pair.  Conversely, every
\(\varepsilon\)-KKT pair satisfies
\(\mathcal R(x,\lambda)\le\sqrt{2}\,\varepsilon\); this fixed factor does not
affect any complexity order.
\subsection{Algorithm and preliminary estimates}
\label{sec:method}

We now present the deterministic NR-LALM iteration.  For fixed parameters $\rho,\beta>0$, the primal step $p_k$ is obtained by solving the linearized augmented-Lagrangian subproblem
$$
\min_{p\in\mathbb{R}^n} \ip{\grad f(x_k)}{p}
 +\ip{\lambda_k}{c(x_k) +\grad c(x_k)^\top p}
 +\frac{\rho}{2}\norm{c(x_k) +\grad c(x_k)^\top p}^2
 +\frac{\beta}{2}\norm{p}^2.
$$
Because $\beta>0$, this subproblem is strongly convex and hence has a unique minimizer.  Algorithm~\ref{alg:lalm} combines this primal step with the classical multiplier update based on the nonlinear constraint residual
$c(x_{k+1})$.
\begin{algorithm}[H]
\caption{Fixed-penalty NR-LALM}
\label{alg:lalm}
\leanDeclTag{Algorithm_2_1/Iteration.lean}{143}
\leanDeclTagEnd
\begin{algorithmic}[1]
\Require $x_0,\lambda_0$, fixed $\rho>0$, and fixed proximal coefficient $\beta>0$.
\For{$k=0,1,2,\ldots$}
    \State Compute the linearized augmented-Lagrangian step $p_k$ from
    \begin{equation*}
        \big(\beta I_n+\rho\grad c(x_k)\grad c(x_k)^\top\big)p_k
        =-\grad f(x_k)-\grad c(x_k)(\lambda_k+\rho c(x_{k})).
    \end{equation*}
    \State Set $x_{k+1}=x_k+p_k$.
    \State Perform the classical ALM multiplier update
    \begin{equation*}
        \lambda_{k+1}=\lambda_k+\rho c(x_{k+1}).
    \end{equation*}
\EndFor
\end{algorithmic}
\end{algorithm}

After the initial evaluation of $c(x_0)$, each iteration evaluates
$\grad f(x_k)$, $\grad c(x_k)$, and the new constraint residual
$c(x_{k+1})$, and solves one Jacobian-induced symmetric positive-definite system.  Thus no nonlinear inner subproblem is required.  When $m\ll n$, the Woodbury identity reduces this solve to an $m\times m$ positive-definite system.  We count exact linear-system solves separately from oracle evaluations; this count does not represent their arithmetic or Krylov cost.

The primal model uses \(c(x_k)+\grad c(x_k)^\top p_k\), whereas the
multiplier update uses \(c(x_{k+1})\).  Define the constraint linearization error by \(d_k:=c(x_{k+1})-c(x_k)-\grad c(x_k)^\top p_k\). Combining the first-order optimality condition of the primal model with the classical multiplier update gives
\begin{equation}
\grad f(x_k) +\grad c(x_k)\lambda_{k+1} +\beta p_k = \rho\grad c(x_k)d_k.
\label{eq:identity}
\end{equation}
Whenever \([x_k,x_{k+1}]\subset\mathcal U\), the fundamental theorem of calculus gives
$$
d_k = \int_0^1 \left[ \grad c(x_k+t p_k)-\grad c(x_k) \right]^\top p_k\,dt.
$$
Hence, by the Lipschitz continuity of \(\grad c\),
\begin{equation}
    \norm{d_k}
    \le
    \kappa_{\mathrm{lin}}\norm{p_k}^2,
    \qquad
    \kappa_{\mathrm{lin}}
    :=
    \frac{L_c}{2}.
    \label{eq:linerrorbounds}
\end{equation}

Thus the classical update introduces a quadratic perturbation into the multiplier identity that would be exact under a linearized residual update. We say that the iterates remain in the regularity region up to iteration $N$ if $[x_k,x_{k+1}]\subset\mathcal U$ for $0\le k\le N-1$. Requiring the entire line segment between successive iterates, rather than only its endpoints, to lie in $\mathcal U$ permits the use of the local Taylor estimates. This condition is automatic when $\mathcal U=\R^n$; under the conditions stated below, Theorem~\ref{thm:localization} shows that it holds for every $N$.

For the fixed parameters $\beta,\rho>0$ and auxiliary constants
$\Delta,\Lambda>0$, define
$$
\begin{aligned}
    C_{\mathrm{mod}}
    &:=\frac{L_f}{2}
      +\kappa_{\mathrm{lin}}(3\Lambda+\rho M\Delta)
      +\frac{\rho}{2}\kappa_{\mathrm{lin}}^2\Delta^2,\quad
    A_p:=\beta+\rho M\kappa_{\mathrm{lin}}\Delta,\\
    B_p&:=A_p+L_f+L_c\Lambda,\quad
    C_{\lambda,p}:=\frac{4}{\sigma^2}\max\{A_p^2,B_p^2\}.
\end{aligned}
$$
Here $C_{\mathrm{mod}}$ collects the error terms in the primal-descent estimate; $A_p$ and $B_p$ are the coefficients of $\norm{p_k}$ and $\norm{p_{k-1}}$, respectively, in the multiplier-increment estimate; and $C_{\lambda,p}$ is the resulting coefficient in the squared multiplier-increment bound.

\begin{assumption}
\label{ass:safe}
\leanDeclTag{Assumption_2_3.lean}{174}
\leanDeclTagEnd
The fixed algorithmic parameters $\beta,\rho>0$ and auxiliary constants $\Delta,\Lambda>0$ are independent of the target accuracy and satisfy
\begin{equation}
\Lambda
\ge\frac{G+\beta\Delta
+\rho M\kappa_{\mathrm{lin}}\Delta^2}{\sigma},
\quad
\frac{G}{\beta}+\frac{3M\Lambda}{\beta+\rho\sigma^2}
\le\Delta,
\quad
C_{\mathrm{mod}}
\le\frac{3\beta}{8},
\quad
\rho
\ge\frac{8C_{\lambda,p}}{\beta}.
\label{eq:safe}
\end{equation}
The initial pair also satisfies $\norm{\lambda_0}\le\Lambda$ and
$\rho\norm{c(x_0)}\le\Lambda$.
\end{assumption}

Together, the inequalities in \eqref{eq:safe} yield the step and multiplier bounds and the Lyapunov descent estimate used below.  They are sufficient analytical conditions rather than a parameter-selection rule.  The next proposition shows that they are jointly feasible with fixed, accuracy-independent parameters.

\begin{proposition}
\label{prop:existence}
\leanDeclTag{Proposition_2_4.lean}{381}
\leanDeclTagEnd
Under Assumption~\ref{ass:smooth}, there exist constants
$\Delta>0$, $\tau_\rho>0$, and $\beta_0>0$ such that, for every fixed $\beta\ge\beta_0$, setting $\rho=\tau_\rho\beta$, there is a
$\Lambda>0$ for which all inequalities in \eqref{eq:safe} hold.
The resulting tuple $(\Delta,\beta,\rho,\Lambda)$ is independent of the target accuracy.
\end{proposition}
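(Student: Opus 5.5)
The plan is to set $\rho=\tau_\rho\beta$ and choose $\Lambda$ as the smallest value allowed by the first inequality,
$$\Lambda:=\frac{G+\beta\Delta+\tau_\rho\beta M\kappa_{\mathrm{lin}}\Delta^2}{\sigma}.$$
Then every quantity in \eqref{eq:safe} has the form $\beta\cdot(\text{function of }\tau_\rho,\Delta)+O(1)$, where the $O(1)$ terms involve only $G,L_f,L_c,M,\sigma$. Dividing each remaining inequality by $\beta$ (or by $\beta^2$ for the last one) gives a leading condition on $(\tau_\rho,\Delta)$ alone, plus remainders of order $1/\beta$. The constants are fixed in the order $\tau_\rho$, then $\Delta$, then $\beta_0$, all depending only on the constants of Assumption~\ref{ass:smooth}. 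This makes the tuple accuracy-independent.

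\emph{Step 1: the multiplier-increment condition.} Substituting $\Lambda$ gives
$$A_p=\beta(1+\tau_\rho M\kappa_{\mathrm{lin}}\Delta),\qquad B_p=\beta\bigl(1+\tau_\rho M\kappa_{\mathrm{lin}}\Delta+L_c(\Delta+\tau_\rho M\kappa_{\mathrm{lin}}\Delta^2)/\sigma\bigr)+L_f+L_cG/\sigma.$$
Hence $\rho\ge 8C_{\lambda,p}/\beta$ reads $\tau_\rho\ge (32/\sigma^2)(B_p/\beta)^2$. If $\tau_\rho\Delta$ and $\Delta$ are small and $\beta$ is large, then $B_p/\beta\le 1+\delta$. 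I therefore fix
$$\tau_\rho:=\frac{\max\{64,\;12M/\sigma\}}{\sigma^2}.$$
The first entry gives slack for the factor $(1+\delta)^2\le 2$. The second is chosen for Step 2.

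\emph{Step 2: the step-size condition.} The second inequality in \eqref{eq:safe} becomes
$$\frac{G}{\beta}+\frac{3MG}{\sigma\beta(1+\tau_\rho\sigma^2)}+\frac{3M(\Delta+\tau_\rho M\kappa_{\mathrm{lin}}\Delta^2)}{\sigma(1+\tau_\rho\sigma^2)}\le\Delta.$$
By the choice of $\tau_\rho$, $3M/(\sigma(1+\tau_\rho\sigma^2))\le 1/4$. The term $3M^2\kappa_{\mathrm{lin}}\Delta^2/\sigma^3$ is at most $\Delta/4$ once $\Delta\le \sigma^3/(12M^2\kappa_{\mathrm{lin}})$. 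The two $1/\beta$ terms are at most $\Delta/2$ for $\beta$ large.

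\emph{Step 3: the descent condition.} Write
$$C_{\mathrm{mod}}=\frac{L_f}{2}+\frac{3\kappa_{\mathrm{lin}}G}{\sigma}+\beta\Bigl[\frac{3\kappa_{\mathrm{lin}}}{\sigma}(\Delta+\tau_\rho M\kappa_{\mathrm{lin}}\Delta^2)+\tau_\rho\kappa_{\mathrm{lin}}M\Delta+\frac{\tau_\rho\kappa_{\mathrm{lin}}^2\Delta^2}{2}\Bigr].$$
With $\tau_\rho$ already fixed, the bracket tends to $0$ as $\Delta\to0$. So I shrink $\Delta$ further until the bracket is at most $1/8$ and until $\tau_\rho M\kappa_{\mathrm{lin}}\Delta$ and $L_c(\Delta+\tau_\rho M\kappa_{\mathrm{lin}}\Delta^2)/\sigma$ are small enough to give $B_p/\beta\le\sqrt2$ up to $O(1/\beta)$, as used in Step 1. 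Finally I take $\beta_0$ large enough for three things: the constants $L_f/2+3\kappa_{\mathrm{lin}}G/\sigma\le\beta/4$; the $1/\beta$ remainders in Step 2 are absorbed; and the $(L_f+L_cG/\sigma)/\beta$ contribution to $B_p/\beta$ keeps $(B_p/\beta)^2\le 2$. Then $32\cdot 2/\sigma^2\le\tau_\rho$ holds.

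The main obstacle is the circular coupling between the conditions. Step 2 wants $\tau_\rho$ large. Step 3 wants $\tau_\rho\Delta$ small. Step 1 needs $\tau_\rho$ bounded below by a quantity that itself grows with $\tau_\rho\Delta$. The ordering ($\tau_\rho$ first, from $\sigma$ and $M$ only, then $\Delta$, then $\beta_0$) resolves this because every leading-order dependence on $\tau_\rho$ in Steps 1 and 3 appears multiplied by $\Delta$. I would verify carefully that no hidden $\beta$-dependence enters through $\Lambda$ inside $B_p$, since $L_c\Lambda$ contributes $O(\beta\Delta)$ and not $O(\beta)$.
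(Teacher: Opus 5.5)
Your proposal is correct and follows essentially the same route as the paper: set $\rho=\tau_\rho\beta$, take $\Lambda$ to be the smallest value allowed by the first inequality in \eqref{eq:safe}, fix $\tau_\rho$ from $\sigma$ and $M$ alone, then shrink $\Delta$, and finally take $\beta$ large enough that the $\mathcal{O}(1/\beta)$ remainders are absorbed. The differences are cosmetic. The paper takes $\tau_\rho>\max\{32/\sigma^2,6M/\sigma^3\}$ and argues through limits as $\beta\to\infty$, using $a_\Delta\le1$ to bound the $\Lambda$-dependent terms. You instead build in explicit slack with $\tau_\rho=\max\{64,12M/\sigma\}/\sigma^2$ and allocate explicit budgets ($\Delta/4+\Delta/4+\Delta/2$ and $\beta/8+\beta/4$), and these check out.
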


\begin{proof}
Choose
\(\tau_\rho>\max\{32/\sigma^2,6M/\sigma^3\}\) and, for
\(\Delta>0\), set
\begin{align*}
a_\Delta:=\tau_\rho M\kappa_{\mathrm{lin}}\Delta,\quad
H_\Delta:=\frac4{\sigma^2}
\max\left\{
(1+a_\Delta)^2,\
\left(1+a_\Delta+
\frac{L_c\Delta(1+a_\Delta)}{\sigma}\right)^2
\right\}.
\end{align*}
Since \(a_\Delta\to0\) and
\(H_\Delta\to4/\sigma^2<\tau_\rho/8\), choose \(\Delta>0\) so that
\begin{equation}
 a_\Delta\le1,\qquad H_\Delta<\tau_\rho/8,\qquad
 \kappa_{\mathrm{lin}}\Delta
 \left(\frac6\sigma+\tau_\rho M\right)
 +\frac{\tau_\rho}{2}\kappa_{\mathrm{lin}}^2\Delta^2
 \le\frac18.
    \label{eq:delta-choice}
\end{equation}
Now set \(\rho=\tau_\rho\beta\) and
\(\Lambda:=(G+\beta\Delta+\rho M\kappa_{\mathrm{lin}}\Delta^2)/\sigma\).
With this definition of \(\Lambda\), verification of \eqref{eq:safe}
reduces, as \(\beta\to\infty\), to
\begin{align*}
 &\frac1\Delta\left[
 \frac G\beta+\frac{3M\Lambda}{\beta+\rho\sigma^2}\right]
 \longrightarrow
 \frac{3M(1+a_\Delta)}{\sigma(1+\tau_\rho\sigma^2)}
 <1,\\
 &\frac{C_{\mathrm{mod}}}{\beta}
 \le o(1)+\kappa_{\mathrm{lin}}\Delta
 \left(\frac6\sigma+\tau_\rho M\right)
 +\frac{\tau_\rho}{2}\kappa_{\mathrm{lin}}^2\Delta^2
 \le o(1)+\frac18,\\
 &\frac{C_{\lambda,p}}{\beta^2}
 \longrightarrow H_\Delta<\frac{\tau_\rho}{8}.
\end{align*}
Thus, for all sufficiently large \(\beta\), all conditions in \eqref{eq:safe} hold.  Increasing a fixed threshold \(\beta_0\) if necessary completes the proof, with every choice independent of \(\eps\).
\end{proof}

The proof gives the following order of choice: fix $\tau_\rho$ above the stated threshold, choose $\Delta>0$ sufficiently small and $\beta>0$ sufficiently large, and then set $\rho=\tau_\rho\beta$ and $\Lambda=(G+\beta\Delta+\rho M\kappa_{\mathrm{lin}}\Delta^2)/\sigma$. These choices depend only on the constants in Assumption~\ref{ass:smooth} and are independent of the target accuracy $\eps$.  The initialization requirement in Assumption~\ref{ass:safe} remains separate.  We next specify a level set on which the preceding conditional estimates ensure that all subsequent iterates remain in $\mathcal U$.

For $S\subseteq\R^n$ and $r\ge0$, write
$\mathcal N_r(S):=\{y\in\R^n:\operatorname{dist}(y,S)\le r\}$.
Define
\begin{equation}
    \overline\Phi_1
    :=
    f(x_0)+G\Delta+\frac{4\Lambda^2}{\rho}
    +\frac{C_{\lambda,p}}{\rho}\Delta^2,
    \qquad
    H_{\mathrm{det}}
    :=
    \overline\Phi_1+\frac{\Lambda^2}{2\rho}.
    \label{eq:localizationlevels}
\end{equation}
Set
\(\mathcal S_{\mathrm{det}}
:=\{x\in\R^n:f(x)\le H_{\mathrm{det}},
\norm{c(x)}\le2\Lambda/\rho\}\).

\begin{assumption}
\label{ass:localization}
\leanDeclTag{Assumption_2_5/Region.lean}{78}
\leanDeclTagEnd
The closed $\Delta$-neighborhood of $\mathcal S_{\mathrm{det}}$ is contained in the regularity region:
$\mathcal N_\Delta(\mathcal S_{\mathrm{det}})\subset\mathcal U$.
\end{assumption}

This condition is automatic when $\mathcal U=\R^n$.  More generally, it suffices that the bounds in Assumption~\ref{ass:smooth} hold on an open set containing the closed $\Delta$-neighborhood of $\mathcal S_{\mathrm{det}}$; no global regularity is required. The initialization in Assumption~\ref{ass:safe} holds, in particular, whenever $x_0$ is feasible and $\lambda_0=0$.  The following lemma derives uniform step and multiplier bounds up to any iteration for which the iterates remain in the regularity region.

\begin{lemma}
\label{lem:invariant}
\leanDeclTag{Lemma_2_6.lean}{373}
\leanDeclTagEnd
Suppose that Assumptions~\ref{ass:smooth} and~\ref{ass:safe} hold, and let $N\ge1$. If the NR-LALM iterates remain in the regularity region up to iteration $N$, then
$$
    \norm{p_k}\leq\Delta
    \quad (0\leq k\leq N-1),
    \qquad
    \norm{\lambda_k}\leq\Lambda
    \quad (0\leq k\leq N).
$$
\end{lemma}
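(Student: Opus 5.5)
We argue by induction on $k$, proving jointly $\norm{\lambda_k}\le\Lambda$ and, for $k\ge1$, $\rho\norm{c(x_k)}\le 2\Lambda$ (equivalently $\norm{\lambda_k-\lambda_{k-1}}\le 2\Lambda$), and deducing $\norm{p_k}\le\Delta$ from the multiplier bound at step $k$. The base case is Assumption~\ref{ass:safe}: $\norm{\lambda_0}\le\Lambda$ and $\rho\norm{c(x_0)}\le\Lambda$.

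\emph{Step bound from multiplier bound.} Suppose $\norm{\lambda_k}\le\Lambda$ and $\rho\norm{c(x_k)}\le2\Lambda$. We rewrite the right-hand side of the linear system through the decomposition $\R^n=\operatorname{range}\grad c(x_k)\oplus\ker\grad c(x_k)^\top$. Write $J=\grad c(x_k)$ and $-\grad f(x_k)-J(\lambda_k+\rho c(x_k))=g_\perp+Jw$, with $\norm{g_\perp}\le G$ and $w$ collecting the range component of $\grad f$ together with $\lambda_k+\rho c(x_k)$. On $\ker J^\top$ the matrix acts as $\beta I$, which gives the term $G/\beta$. On the range of $J$ the eigenvalues are at least $\beta+\rho\sigma^2$, and $\norm{Jw}\le M\norm{w}$. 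A careful split would put $\grad f$ entirely into the $1/\beta$ part and bound $\norm{J(\lambda_k+\rho c(x_k))}\le M(\Lambda+2\Lambda)=3M\Lambda$, which is where the constant $3$ comes from. Since $(\beta I+\rho JJ^\top)^{-1}J=J(\beta I+\rho J^\top J)^{-1}$, we get $\norm{(\beta I+\rho JJ^\top)^{-1}Jv}\le M\norm{v}/(\beta+\rho\sigma^2)$. Therefore $\norm{p_k}\le G/\beta+3M\Lambda/(\beta+\rho\sigma^2)\le\Delta$ by \eqref{eq:safe}.

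\emph{Multiplier bound from step bound.} With $\norm{p_k}\le\Delta$ and $[x_k,x_{k+1}]\subset\mathcal U$, \eqref{eq:linerrorbounds} gives $\norm{d_k}\le\kappa_{\mathrm{lin}}\Delta^2$. Identity \eqref{eq:identity} reads $J(\lambda_{k+1}-\rho d_k)=-\grad f(x_k)-\beta p_k$. Uniform LICQ (Assumption~\ref{ass:smooth}(iii)) then yields
\[
\sigma\norm{\lambda_{k+1}-\rho d_k}\le G+\beta\Delta .
\]
Hence $\norm{\lambda_{k+1}}\le (G+\beta\Delta)/\sigma+\rho\kappa_{\mathrm{lin}}\Delta^2$. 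To match the first inequality in \eqref{eq:safe}, I would instead apply LICQ to $J\lambda_{k+1}=-\grad f-\beta p_k+\rho J d_k$, which gives $\sigma\norm{\lambda_{k+1}}\le G+\beta\Delta+\rho M\kappa_{\mathrm{lin}}\Delta^2$, so $\norm{\lambda_{k+1}}\le\Lambda$. Finally $\rho\norm{c(x_{k+1})}=\norm{\lambda_{k+1}-\lambda_k}\le2\Lambda$, which closes the induction. Running it for $k=0,\dots,N-1$ gives the claimed ranges, with $\lambda_N$ covered by the last step.

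\emph{Main obstacle.} The delicate step is the spectral bound on $p_k$. It needs the auxiliary invariant $\rho\norm{c(x_k)}\le2\Lambda$, which must be carried along in the induction, and the push-through identity to obtain the denominator $\beta+\rho\sigma^2$ rather than $\beta$. If this does not produce the factor $3M\Lambda$ cleanly, I would bound $\norm{\lambda_k+\rho c(x_k)}\le3\Lambda$ directly and apply the push-through estimate to that vector.
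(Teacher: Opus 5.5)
Your proposal is correct and follows essentially the same route as the paper: the push-through identity gives $\norm{p_k}\le G/\beta+M\norm{\lambda_k+\rho c(x_k)}/(\beta+\rho\sigma^2)$ with $\norm{\lambda_k+\rho c(x_k)}\le 3\Lambda$, and uniform LICQ applied to \eqref{eq:identity} then gives $\norm{\lambda_{k+1}}\le\Lambda$. The paper writes the $3\Lambda$ bound as $\norm{2\lambda_k-\lambda_{k-1}}\le 3\Lambda$, which is your auxiliary invariant in another form; your range/kernel detour is unnecessary, and your first multiplier estimate would also have sufficed, since $\sigma\le M$ when $m\ge1$.
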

\begin{proof}
For any $A\in\R^{n\times m}$,
$$
    (\beta I_n+\rho AA^\top)^{-1}A
    =
    A(\beta I_m+\rho A^\top A)^{-1}.
$$
The primal-step equation and uniform LICQ therefore give
\begin{equation}
    \norm{p_k}
    \leq
    \frac{G}{\beta}
    +
    \frac{M}{\beta+\rho\sigma^2}
    \norm{\lambda_k+\rho c(x_k)}.
    \label{eq:p-resolvent}
\end{equation}
At $k=0$, Assumption~\ref{ass:safe} gives $\norm{\lambda_0+\rho c(x_0)}\leq2\Lambda$.
Hence \eqref{eq:p-resolvent} and \eqref{eq:safe} imply
$\norm{p_0}\leq\Delta$.  Moreover, by
\eqref{eq:identity}, \eqref{eq:linerrorbounds}, and uniform LICQ,
$$
    \sigma\norm{\lambda_1}
    \leq
    G+\beta\Delta
    +\rho M\kappa_{\mathrm{lin}}\Delta^2
    \leq
    \sigma\Lambda.
$$
Now suppose that
$\norm{\lambda_{k-1}},\norm{\lambda_k}\leq\Lambda$. The multiplier
update yields
$$
    \rho c(x_k)=\lambda_k-\lambda_{k-1},
    \qquad
    \norm{\lambda_k+\rho c(x_k)}
    =
    \norm{2\lambda_k-\lambda_{k-1}}
    \leq3\Lambda.
$$
Thus \eqref{eq:p-resolvent} and \eqref{eq:safe} give
$\norm{p_k}\leq\Delta$. Applying
\eqref{eq:identity}, \eqref{eq:linerrorbounds}, and uniform LICQ once
more gives
$$
    \sigma\norm{\lambda_{k+1}}
    \leq
    G+\beta\Delta
    +\rho M\kappa_{\mathrm{lin}}\Delta^2
    \leq
    \sigma\Lambda.
$$
The conclusion follows by induction.
\end{proof}

Here $\Lambda$ is an a priori analysis constant, but Lemma~\ref{lem:invariant} proves---rather than assumes---that every multiplier up to any iteration for which the iterates remain in the regularity region satisfies this bound.

\subsection{Lyapunov descent and trajectory localization}

The primal step decreases the augmented Lagrangian before the multiplier update, whereas the classical multiplier update adds a nonnegative term. The next two lemmas quantify these two effects.  Their combination yields a decreasing Lyapunov function \eqref{eq:potential}, which is then used to establish trajectory localization.

\begin{lemma}
\label{lem:primaldescent}
\leanDeclTag{Lemma_2_7.lean}{316}
\leanDeclTagEnd
Under Assumptions~\ref{ass:smooth} and~\ref{ass:safe}, for any $k\ge0$ such that the iterates remain in the regularity region up to iteration $k+1$, we have
\(\cL_\rho(x_{k+1},\lambda_k)
\le\cL_\rho(x_k,\lambda_k)-(\beta/2)\norm{p_k}^2\).
\end{lemma}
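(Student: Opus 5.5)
The plan is to compare $\cL_\rho(x_{k+1},\lambda_k)$ with the strongly convex linearized model whose minimizer is $p_k$, and to absorb the nonlinearity errors into $C_{\mathrm{mod}}\norm{p_k}^2$ using the bounds of Lemma~\ref{lem:invariant}. Throughout, write $J_k:=\grad c(x_k)$ and $c_k:=c(x_k)$. Define
$$
m_k(p):=\ip{\grad f(x_k)}{p}+\ip{\lambda_k}{c_k+J_k^\top p}+\frac{\rho}{2}\norm{c_k+J_k^\top p}^2+\frac{\beta}{2}\norm{p}^2 .
$$

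\textbf{Step 1 (model decrease).} The function $m_k$ is $\beta$-strongly convex and $p_k$ is its minimizer. Hence $m_k(0)\ge m_k(p_k)+(\beta/2)\norm{p_k}^2$. Rewriting this inequality gives
$$
\ip{\grad f(x_k)}{p_k}+\ip{\lambda_k}{c_k+J_k^\top p_k}+\frac{\rho}{2}\norm{c_k+J_k^\top p_k}^2\le \ip{\lambda_k}{c_k}+\frac{\rho}{2}\norm{c_k}^2-\beta\norm{p_k}^2 .
$$

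\textbf{Step 2 (nonlinear errors).} Since $[x_k,x_{k+1}]\subset\mathcal U$, the descent lemma for $f$ gives
$$
f(x_{k+1})\le f(x_k)+\ip{\grad f(x_k)}{p_k}+\frac{L_f}{2}\norm{p_k}^2 .
$$
For the constraints, write $c(x_{k+1})=c_k+J_k^\top p_k+d_k$ and expand. Relative to the linearized terms in Step~1, this produces the extra terms
$$
\ip{\lambda_k}{d_k}+\rho\ip{c_k+J_k^\top p_k}{d_k}+\frac{\rho}{2}\norm{d_k}^2 .
$$
We bound each piece as follows.
\begin{itemize}[leftmargin=1.8em,itemsep=0em]
\item By \eqref{eq:linerrorbounds}, $\norm{d_k}\le\kappa_{\mathrm{lin}}\norm{p_k}^2$.
\item By Lemma~\ref{lem:invariant}, applied with $N=k+1$, we have $\norm{\lambda_k}\le\Lambda$ and $\norm{p_k}\le\Delta$.
\item For $\rho\norm{c_k}$: if $k\ge1$, the update gives $\rho c_k=\lambda_k-\lambda_{k-1}$, so $\rho\norm{c_k}\le2\Lambda$. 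If $k=0$, Assumption~\ref{ass:safe} gives $\rho\norm{c_0}\le\Lambda$ directly.
\item Consequently $\rho\norm{c_k+J_k^\top p_k}\le 2\Lambda+\rho M\Delta$.
\end{itemize}
Putting these together, the error terms are at most
$$
\Bigl[\kappa_{\mathrm{lin}}(3\Lambda+\rho M\Delta)+\frac{\rho}{2}\kappa_{\mathrm{lin}}^2\Delta^2\Bigr]\norm{p_k}^2 .
$$

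\textbf{Step 3 (combine).} Adding Steps~1 and~2 gives
$$
\cL_\rho(x_{k+1},\lambda_k)\le\cL_\rho(x_k,\lambda_k)-\bigl(\beta-C_{\mathrm{mod}}\bigr)\norm{p_k}^2 .
$$
By \eqref{eq:safe}, $C_{\mathrm{mod}}\le3\beta/8$, so $\beta-C_{\mathrm{mod}}\ge5\beta/8\ge\beta/2$, which yields the claimed decrease.

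The main point of care is Step~2. There we must check that the hypothesis "in the region up to iteration $k+1$" is exactly what allows invoking Lemma~\ref{lem:invariant} for $\norm{\lambda_k}$, $\norm{\lambda_{k-1}}$ and $\norm{p_k}$. We must also treat the $k=0$ case separately, because $\lambda_{-1}$ is not defined and the bound on $\rho\norm{c_0}$ comes from the initialization instead. Everything else is routine expansion.
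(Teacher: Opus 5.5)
Your proof is correct and essentially matches the paper's: the paper takes the inner product of the model optimality condition with $p_k$, which is equivalent to your strong-convexity comparison of $m_k(0)$ and $m_k(p_k)$ (it keeps an extra $-(\rho/2)\norm{\grad c(x_k)^\top p_k}^2$ that is then dropped). It then bounds the same terms $\ip{\lambda_k+\rho c(x_k)+\rho\grad c(x_k)^\top p_k}{d_k}+\frac{\rho}{2}\norm{d_k}^2$ via Lemma~\ref{lem:invariant} and \eqref{eq:linerrorbounds}, obtaining $(-\beta+C_{\mathrm{mod}})\norm{p_k}^2\le-\frac{5\beta}{8}\norm{p_k}^2$; your explicit handling of $k=0$ and of invoking Lemma~\ref{lem:invariant} with $N=k+1$ supplies details the paper leaves implicit.
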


\begin{proof}
Using the $L_f$-smoothness of $f$,
$$
    c(x_{k+1})
    =
    c(x_k)+\grad c(x_k)^\top p_k+d_k,
$$
and the inner product of the model optimality condition with $p_k$,
we obtain
\begin{align*}
\cL_\rho(x_{k+1},\lambda_k)
-\cL_\rho(x_k,\lambda_k)
\leq
&-\left(\beta-\frac{L_f}{2}\right)\norm{p_k}^2
-\frac{\rho}{2}\norm{\grad c(x_k)^\top p_k}^2                 \\
&+\ip{\lambda_k+\rho c(x_k)
+\rho\grad c(x_k)^\top p_k}{d_k}
+\frac{\rho}{2}\norm{d_k}^2.
\end{align*}
By Lemma~\ref{lem:invariant} and \eqref{eq:linerrorbounds},
\begin{align*}
&\left|
\ip{\lambda_k+\rho c(x_k)
+\rho\grad c(x_k)^\top p_k}{d_k}
\right|
+\frac{\rho}{2}\norm{d_k}^2                                  \\
&\leq
\left[
\kappa_{\mathrm{lin}}(3\Lambda+\rho M\Delta)
+\frac{\rho}{2}\kappa_{\mathrm{lin}}^2\Delta^2
\right]\norm{p_k}^2.
\end{align*}
Therefore, by the definition of $C_{\mathrm{mod}}$,
$$
    \cL_\rho(x_{k+1},\lambda_k)
    -\cL_\rho(x_k,\lambda_k)
    \leq
    (-\beta+C_{\mathrm{mod}})\norm{p_k}^2
    \leq
    -\frac{5\beta}{8}\norm{p_k}^2,
$$
where the last inequality follows from \eqref{eq:safe}.  The result
follows since $5\beta/8\geq\beta/2$.
\end{proof}

The dual update changes the augmented Lagrangian by
\begin{equation}
    \cL_\rho(x_{k+1},\lambda_{k+1})
    -\cL_\rho(x_{k+1},\lambda_k)
    =\frac1\rho\norm{\lambda_{k+1}-\lambda_k}^2.
    \label{eq:dualincrease}
\end{equation}
Thus the multiplier update contributes a nonnegative term.  To absorb this term into the primal decrease, we next bound the multiplier increment by the two most recent primal steps.
\begin{lemma}
\label{lem:dualinc}
\leanDeclTag{Lemma_2_8.lean}{286}
\leanDeclTagEnd
Under Assumptions~\ref{ass:smooth} and~\ref{ass:safe}, for any $k\ge1$ such that the iterates remain in the regularity region up to iteration $k+1$, the following bound holds:
\begin{equation}
    \norm{\lambda_{k+1}-\lambda_k}^2
    \le C_{\lambda,p}
       \big(\norm{p_k}^2+\norm{p_{k-1}}^2\big).
    \label{eq:dualinc}
\end{equation}
\end{lemma}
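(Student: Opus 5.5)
We would subtract the perturbed multiplier identity \eqref{eq:identity} at consecutive indices and then invert \(\grad c(x_k)\) using uniform LICQ. Writing \eqref{eq:identity} at iteration \(k\) and at iteration \(k-1\) (both available, since \(k\ge1\) and the iterates remain in the regularity region up to \(k+1\)), we get
\begin{align*}
\grad f(x_k)+\grad c(x_k)\lambda_{k+1}+\beta p_k&=\rho\grad c(x_k)d_k,\\
\grad f(x_{k-1})+\grad c(x_{k-1})\lambda_{k}+\beta p_{k-1}&=\rho\grad c(x_{k-1})d_{k-1}.
\end{align*}
Subtracting and regrouping, and using \(\grad c(x_{k-1})\lambda_k = \grad c(x_k)\lambda_k - (\grad c(x_k)-\grad c(x_{k-1}))\lambda_k\), we obtain
$$
\grad c(x_k)(\lambda_{k+1}-\lambda_k)
= \bigl[\rho\grad c(x_k)d_k-\beta p_k\bigr]
-\bigl[\grad f(x_k)-\grad f(x_{k-1})\bigr]
-\bigl[\grad c(x_k)-\grad c(x_{k-1})\bigr]\lambda_k
+\bigl[\beta p_{k-1}-\rho\grad c(x_{k-1})d_{k-1}\bigr].
$$

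Next we bound each bracket. Since \([x_{k-1},x_k]\subset\mathcal U\) and \(x_k-x_{k-1}=p_{k-1}\), the Lipschitz bounds in Assumption~\ref{ass:smooth} give \(\norm{\grad f(x_k)-\grad f(x_{k-1})}\le L_f\norm{p_{k-1}}\) and \(\norm{\grad c(x_k)-\grad c(x_{k-1})}\le L_c\norm{p_{k-1}}\). Lemma~\ref{lem:invariant} gives \(\norm{\lambda_k}\le\Lambda\), so the third bracket is at most \(L_c\Lambda\norm{p_{k-1}}\).

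For the linearization terms we use \eqref{eq:linerrorbounds} together with \(\norm{p_j}\le\Delta\) from Lemma~\ref{lem:invariant}:
\[
\rho\norm{\grad c(x_j)d_j}\le\rho M\kappa_{\mathrm{lin}}\norm{p_j}^2\le\rho M\kappa_{\mathrm{lin}}\Delta\norm{p_j}.
\]
Thus the \(p_k\) terms contribute \((\beta+\rho M\kappa_{\mathrm{lin}}\Delta)\norm{p_k}=A_p\norm{p_k}\). The \(p_{k-1}\) terms contribute \((\beta+\rho M\kappa_{\mathrm{lin}}\Delta+L_f+L_c\Lambda)\norm{p_{k-1}}=B_p\norm{p_{k-1}}\).

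On the left side, uniform LICQ (Assumption~\ref{ass:smooth}(iii)) gives \(\norm{\grad c(x_k)(\lambda_{k+1}-\lambda_k)}\ge\sigma\norm{\lambda_{k+1}-\lambda_k}\). Hence
\[
\sigma\norm{\lambda_{k+1}-\lambda_k}\le A_p\norm{p_k}+B_p\norm{p_{k-1}}.
\]
Squaring and using \((a+b)^2\le2a^2+2b^2\) yields
\[
\norm{\lambda_{k+1}-\lambda_k}^2\le\frac{2}{\sigma^2}\max\{A_p^2,B_p^2\}\bigl(\norm{p_k}^2+\norm{p_{k-1}}^2\bigr),
\]
which is at most \(C_{\lambda,p}(\norm{p_k}^2+\norm{p_{k-1}}^2)\), since \(C_{\lambda,p}\) carries the factor \(4\) rather than \(2\).

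The main point needing care is that every estimate is applied only where it is justified. The Lipschitz bounds for \(\grad f\) and \(\grad c\) at \(x_{k-1},x_k\) require those points in \(\mathcal U\), and \eqref{eq:linerrorbounds} for \(d_{k-1}\) and \(d_k\) requires the segments \([x_{k-1},x_k]\) and \([x_k,x_{k+1}]\) in \(\mathcal U\). The hypothesis that the iterates remain in the regularity region up to \(k+1\) supplies all of these. It also places \(N=k+1\) within the scope of Lemma~\ref{lem:invariant}, which gives the bounds \(\norm{\lambda_k}\le\Lambda\) and \(\norm{p_j}\le\Delta\) for \(j\le k\) used above.
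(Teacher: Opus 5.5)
Your proposal is correct and follows the paper's proof essentially step by step. You subtract the identity \eqref{eq:identity} at $k-1$ from the one at $k$, bound the gradient, Jacobian and remainder differences using Lemma~\ref{lem:invariant} and \eqref{eq:linerrorbounds}, apply uniform LICQ to get $\sigma\norm{\lambda_{k+1}-\lambda_k}\le A_p\norm{p_k}+B_p\norm{p_{k-1}}$, and square. Your observation that squaring only needs the factor $2/\sigma^2$, half of the $4/\sigma^2$ built into $C_{\lambda,p}$, is also correct.
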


\begin{proof}
Subtracting \eqref{eq:identity} at \(k-1\) from that at \(k\) gives
\begin{align*}
\grad c(x_k)(\lambda_{k+1}-\lambda_k)
={}&-\big(\grad f(x_k)-\grad f(x_{k-1})\big)
    -\beta(p_k-p_{k-1})\\
&-\big(\grad c(x_k)-\grad c(x_{k-1})\big)\lambda_k
+\rho\grad c(x_k)d_k
  -\rho\grad c(x_{k-1})d_{k-1}.
\end{align*}
The objective-gradient difference is bounded by
\(L_f\norm{p_{k-1}}\), the Jacobian-difference term by
\(L_c\Lambda\norm{p_{k-1}}\), and each remainder term by
\(\rho M\kappa_{\mathrm{lin}}\Delta\norm{p_j}\) for \(j=k-1,k\).
Uniform LICQ therefore gives
\(\sigma\norm{\lambda_{k+1}-\lambda_k}
\le A_p\norm{p_k}+B_p\norm{p_{k-1}}\).
Squaring and using the definition of \(C_{\lambda,p}\) proves
\eqref{eq:dualinc}.
\end{proof}

The estimate \eqref{eq:dualinc} contains the lagged step
$\norm{p_{k-1}}^2$.  To absorb this term when combining the primal decrease with \eqref{eq:dualincrease}, we add a one-step memory term to the augmented Lagrangian and define, for $k\geq1$,
\begin{equation}
    \Phi_k
    :=\cL_\rho(x_k,\lambda_k)
      +\frac{C_{\lambda,p}}{\rho}\norm{p_{k-1}}^2.
    \label{eq:potential}
\end{equation}

\begin{theorem}
\label{thm:lyap}
\leanDeclTag{Theorem_2_9.lean}{97}
\leanDeclTagExtra{Theorem_2_9.lean}{136}
\leanDeclTagEnd
Suppose Assumptions~\ref{ass:smooth} and~\ref{ass:safe} hold, and let $N\ge2$. If the iterates remain in the regularity region up to iteration $N$, then, for $1\le k\le N-1$,
\begin{equation}
    \Phi_{k+1}
    \le \Phi_k
      -\frac{\beta}{4}\norm{p_k}^2.
    \label{eq:lyap}
\end{equation}
Moreover,
\begin{equation}
    \Phi_k\ge
    \underline\Phi
    :=\underline{f}-\frac{\Lambda^2}{2\rho}
    \qquad \text{for }1\le k\le N.
    \label{eq:lowerphi}
\end{equation}
\end{theorem}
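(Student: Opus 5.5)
The plan is to split the Lyapunov change over one iteration into the primal part and the dual part, and then absorb the lagged step with the memory term. Fix $1\le k\le N-1$. The iterates remain in the regularity region up to iteration $N\ge k+1$. Hence Lemma~\ref{lem:primaldescent}, Lemma~\ref{lem:dualinc} (which needs $k\ge1$) and Lemma~\ref{lem:invariant} all apply at index $k$.

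Writing $\cL_\rho(x_{k+1},\lambda_{k+1})-\cL_\rho(x_k,\lambda_k)$ as the dual change \eqref{eq:dualincrease} plus the primal change, and using Lemma~\ref{lem:primaldescent} and Lemma~\ref{lem:dualinc}, gives
\begin{equation*}
\cL_\rho(x_{k+1},\lambda_{k+1})-\cL_\rho(x_k,\lambda_k)\le -\frac{\beta}{2}\norm{p_k}^2+\frac{C_{\lambda,p}}{\rho}\bigl(\norm{p_k}^2+\norm{p_{k-1}}^2\bigr).
\end{equation*}
Adding the memory terms $\frac{C_{\lambda,p}}{\rho}\norm{p_k}^2-\frac{C_{\lambda,p}}{\rho}\norm{p_{k-1}}^2$ from \eqref{eq:potential} cancels the lagged term exactly and yields
\begin{equation*}
\Phi_{k+1}-\Phi_k\le\Bigl(-\frac{\beta}{2}+\frac{2C_{\lambda,p}}{\rho}\Bigr)\norm{p_k}^2.
\end{equation*}
The last inequality in \eqref{eq:safe}, $\rho\ge 8C_{\lambda,p}/\beta$, gives $2C_{\lambda,p}/\rho\le\beta/4$, and \eqref{eq:lyap} follows. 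Note that Lemma~\ref{lem:primaldescent} actually gives the sharper coefficient $5\beta/8$, so there is slack here.

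For the lower bound \eqref{eq:lowerphi}, the memory term is nonnegative, so $\Phi_k\ge\cL_\rho(x_k,\lambda_k)$. Completing the square gives
\begin{equation*}
\ip{\lambda_k}{c(x_k)}+\frac{\rho}{2}\norm{c(x_k)}^2=\frac{1}{2\rho}\norm{\lambda_k+\rho c(x_k)}^2-\frac{1}{2\rho}\norm{\lambda_k}^2\ge-\frac{\norm{\lambda_k}^2}{2\rho}.
\end{equation*}
Since $x_k\in\mathcal U$ for $k\le N$ (it is an endpoint of a segment contained in $\mathcal U$), Assumption~\ref{ass:smooth}(i) gives $f(x_k)\ge\underline f$. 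Lemma~\ref{lem:invariant} gives $\norm{\lambda_k}\le\Lambda$ for $k\le N$. Together these give $\Phi_k\ge\underline f-\Lambda^2/(2\rho)$ for $1\le k\le N$.

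There is no serious obstacle; the main care is bookkeeping of index ranges. First, Lemma~\ref{lem:dualinc} is only available for $k\ge1$, which is why the descent is stated from $k=1$ and $\Phi_k$ is defined only for $k\ge1$. Second, the regularity hypothesis "up to iteration $k+1$" required by both lemmas must be implied by the hypothesis up to $N$, which holds since $k+1\le N$. Third, $x_N$ itself lies in $\mathcal U$ for the lower bound, because it is the right endpoint of $[x_{N-1},x_N]\subset\mathcal U$.
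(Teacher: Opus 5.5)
Your proposal is correct and matches the paper's proof step for step. You combine the primal descent of Lemma~\ref{lem:primaldescent} with the dual increase \eqref{eq:dualincrease}, bound the latter by Lemma~\ref{lem:dualinc}, let the memory term cancel the lagged step, and invoke $\rho\ge 8C_{\lambda,p}/\beta$; the lower bound uses the same completion-of-the-square argument, and your checks that $x_k\in\mathcal U$ (so $f(x_k)\ge\underline f$) and $\norm{\lambda_k}\le\Lambda$ via Lemma~\ref{lem:invariant} for $k\le N$ simply make explicit what the paper leaves implicit.
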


\begin{proof}
Combine Lemma~\ref{lem:primaldescent}, \eqref{eq:dualincrease}, and
Lemma~\ref{lem:dualinc}, and then add the potential term in
\eqref{eq:potential}.  This gives
\begin{align*}
    \Phi_{k+1}
    \le{}&\Phi_k
    +\left(-\frac\beta2+\frac{2C_{\lambda,p}}\rho\right)\norm{p_k}^2.
\end{align*}
The conditions in \eqref{eq:safe} imply
$2C_{\lambda,p}/\rho\le\beta/4$, proving \eqref{eq:lyap}.

For the lower bound, completing the square gives
\(\ip{\lambda_k}{c(x_k)}+(\rho/2)\norm{c(x_k)}^2
\ge-\norm{\lambda_k}^2/(2\rho)\ge-\Lambda^2/(2\rho)\).
The second term in \eqref{eq:potential} is nonnegative, so \eqref{eq:lowerphi} follows.
\end{proof}

The Lyapunov inequality above is conditional on the iterates remaining in the regularity region. The following result uses Assumption~\ref{ass:localization} to verify this condition for every iteration.

\begin{theorem}
\label{thm:localization}
\leanDeclTag{Theorem_2_10.lean}{276}
\leanDeclTagExtra{Theorem_2_10.lean}{767}
\leanDeclTagEnd
Let Assumptions~\ref{ass:smooth}, \ref{ass:safe}, and~\ref{ass:localization}
hold, and run NR-LALM in Algorithm~\ref{alg:lalm}.  Then NR-LALM is
well defined for every $k$, $[x_k,x_{k+1}]\subset\mathcal U$ for every $k\ge0$,
and
\begin{equation}
    \norm{p_k}\le\Delta,\quad
    \norm{\lambda_k}\le\Lambda,\quad
    x_k\in\mathcal S_{\mathrm{det}}\ (k\ge0),\quad
    \Phi_k\le\overline\Phi_1\ (k\ge1).
    \label{eq:localizedbounds}
\end{equation}
\end{theorem}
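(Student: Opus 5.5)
The plan is a strong induction on $N$. The inductive claim $\mathcal P(N)$ is that the iterates remain in the regularity region up to iteration $N$, that $x_k\in\mathcal S_{\mathrm{det}}$ for $0\le k\le N$, and that $\Phi_k\le\overline\Phi_1$ for $1\le k\le N$. Well-definedness is never at issue, since the system matrix $\beta I_n+\rho\grad c(x_k)\grad c(x_k)^\top$ is SPD for any $x_k$ at which $\grad c$ exists; that holds once $x_k\in\mathcal S_{\mathrm{det}}\subset\mathcal U$. The bounds $\norm{p_k}\le\Delta$ and $\norm{\lambda_k}\le\Lambda$ then come from Lemma~\ref{lem:invariant} for every $N$ at which $\mathcal P(N)$ holds.

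The base case is $x_0$. We have $\norm{c(x_0)}\le\Lambda/\rho\le2\Lambda/\rho$ and $f(x_0)\le H_{\mathrm{det}}$, so $x_0\in\mathcal S_{\mathrm{det}}$. The estimate \eqref{eq:p-resolvent} with $\norm{\lambda_0+\rho c(x_0)}\le2\Lambda$ gives $\norm{p_0}\le\Delta$. Hence $[x_0,x_1]\subset\mathcal N_\Delta(\mathcal S_{\mathrm{det}})\subset\mathcal U$ by Assumption~\ref{ass:localization}. Next we bound $\Phi_1=\cL_\rho(x_1,\lambda_1)+(C_{\lambda,p}/\rho)\norm{p_0}^2$:
\begin{itemize}
\item $f(x_1)\le f(x_0)+G\Delta$ by the mean value theorem on the segment;
\item $\lambda_1$ satisfies $\norm{\lambda_1}\le\Lambda$ by Lemma~\ref{lem:invariant}, and $\rho c(x_1)=\lambda_1-\lambda_0$, so $\norm{c(x_1)}\le2\Lambda/\rho$;
\item therefore $\ip{\lambda_1}{c(x_1)}+(\rho/2)\norm{c(x_1)}^2\le 2\Lambda^2/\rho+2\Lambda^2/\rho=4\Lambda^2/\rho$;
\item the memory term is at most $(C_{\lambda,p}/\rho)\Delta^2$.
\end{itemize}
Together these give $\Phi_1\le\overline\Phi_1$, which is exactly why $\overline\Phi_1$ has this form. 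Also $x_1\in\mathcal S_{\mathrm{det}}$, since $f(x_1)\le f(x_0)+G\Delta\le H_{\mathrm{det}}$ and $\norm{c(x_1)}\le2\Lambda/\rho$.

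For the inductive step, assume $x_k\in\mathcal S_{\mathrm{det}}$ for all $k\le N$ and $[x_k,x_{k+1}]\subset\mathcal U$ for $k\le N-1$.
\begin{enumerate}
\item Bound the step. The inductive argument inside the proof of Lemma~\ref{lem:invariant} needs only $x_N\in\mathcal U$ and $\norm{\lambda_{N-1}},\norm{\lambda_N}\le\Lambda$ to give $\norm{p_N}\le\Delta$. The membership $x_N\in\mathcal U$ is what makes $\grad c(x_N)$ obey LICQ and the bound $M$.
\item Localize the segment. Since $x_N\in\mathcal S_{\mathrm{det}}$ and $\norm{p_N}\le\Delta$, we get $[x_N,x_{N+1}]\subset\mathcal N_\Delta(\mathcal S_{\mathrm{det}})\subset\mathcal U$. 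So the iterates remain in the region up to $N+1$.
\item Bound the potential. Theorem~\ref{thm:lyap} now applies up to $N+1$ and gives $\Phi_{N+1}\le\Phi_1\le\overline\Phi_1$. Lemma~\ref{lem:invariant} gives $\norm{\lambda_{N+1}}\le\Lambda$.
\item Recover $x_{N+1}\in\mathcal S_{\mathrm{det}}$. The feasibility part follows from $\rho c(x_{N+1})=\lambda_{N+1}-\lambda_N$. For the objective part, complete the square: $\cL_\rho(x,\lambda)\ge f(x)-\norm{\lambda}^2/(2\rho)$. With the nonnegative memory term this gives $f(x_{N+1})\le\Phi_{N+1}+\Lambda^2/(2\rho)\le H_{\mathrm{det}}$, which closes the induction.
\end{enumerate}

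The main obstacle is the circularity between the Lyapunov bound and localization. Theorem~\ref{thm:lyap} and Lemma~\ref{lem:invariant} are conditional on the segments lying in $\mathcal U$, while localization needs the Lyapunov bound. The resolution is to control each new step $p_N$ using only quantities already established at $x_N$, namely $x_N\in\mathcal S_{\mathrm{det}}$ and the previous multiplier bounds, before the segment $[x_N,x_{N+1}]$ is known to lie in $\mathcal U$. Here $\mathcal N_\Delta$ acts as the buffer. Some care is also needed at $N=1$, where Theorem~\ref{thm:lyap} requires $N\ge2$; there the direct bound on $\Phi_1$ is used instead.
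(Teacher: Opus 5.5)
Your proposal is correct and follows essentially the same induction as the paper. The base case bounds $\Phi_1\le\overline\Phi_1$ directly. Each inductive step then uses $\norm{\lambda_k+\rho c(x_k)}\le3\Lambda$ to get $\norm{p_k}\le\Delta$, the $\Delta$-buffer to place $[x_k,x_{k+1}]$ in $\mathcal U$, Theorem~\ref{thm:lyap} for $\Phi_{k+1}\le\overline\Phi_1$, and completion of the square for $x_{k+1}\in\mathcal S_{\mathrm{det}}$. The differences from the paper are cosmetic: you certify $x_1\in\mathcal S_{\mathrm{det}}$ via $f(x_1)\le f(x_0)+G\Delta$ rather than by completing the square, and you cite Lemma~\ref{lem:invariant} instead of re-deriving its step and multiplier bounds inline.
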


\begin{proof}
By Assumption~\ref{ass:localization},
\begin{equation}
    x_k\in\mathcal S_{\mathrm{det}},
    \quad
    \norm{p_k}\leq\Delta
    \quad\Longrightarrow\quad
    [x_k,x_{k+1}]
    \subset
    \mathcal N_\Delta(\mathcal S_{\mathrm{det}})
    \subset\mathcal U.
    \label{eq:one-step-localization}
\end{equation}
The initialization conditions and \eqref{eq:localizationlevels} give
$x_0\in\mathcal S_{\mathrm{det}}$.  Moreover,
\eqref{eq:p-resolvent} and \eqref{eq:safe} give
$\norm{p_0}\leq\Delta$. Hence \eqref{eq:one-step-localization}
gives $[x_0,x_1]\subset\mathcal U$, and
\eqref{eq:identity}--\eqref{eq:linerrorbounds} give
$\norm{\lambda_1}\leq\Lambda$.  Moreover,
$$
    \rho\norm{c(x_1)}
    =
    \norm{\lambda_1-\lambda_0}
    \leq2\Lambda,
$$
and therefore
\begin{align*}
\Phi_1
&\leq
f(x_0)+G\Delta
+\frac{4\Lambda^2}{\rho}
+\frac{C_{\lambda,p}}{\rho}\Delta^2
=
\overline\Phi_1.
\end{align*}
Completing the square in the augmented Lagrangian gives
$f(x_1) \leq \Phi_1+\Lambda^2/(2\rho) \leq H_{\mathrm{det}}$;
hence $x_1\in\mathcal S_{\mathrm{det}}$.

Suppose now that the claims hold through some $k\geq1$.  Since
$\rho c(x_k)=\lambda_k-\lambda_{k-1}$, we have
$\norm{\lambda_k+\rho c(x_k)} \leq3\Lambda$.
Thus \eqref{eq:p-resolvent} and \eqref{eq:safe} give
$\norm{p_k}\leq\Delta$. The implication
\eqref{eq:one-step-localization} gives $[x_k,x_{k+1}]\subset\mathcal U$,
and \eqref{eq:identity}--\eqref{eq:linerrorbounds} give
$\norm{\lambda_{k+1}}\leq\Lambda$.  Therefore
$\rho\norm{c(x_{k+1})}
=\norm{\lambda_{k+1}-\lambda_k}\le2\Lambda$.  Finally,
Theorem~\ref{thm:lyap} yields
$$
    \Phi_{k+1}\leq\Phi_k\leq\overline\Phi_1,
$$
and completing the square once more gives
$$
    f(x_{k+1})
    \leq
    \Phi_{k+1}+\frac{\Lambda^2}{2\rho}
    \leq
    H_{\mathrm{det}}.
$$
Thus $x_{k+1}\in\mathcal S_{\mathrm{det}}$.
The conclusion follows by induction.
\end{proof}

\subsection{Complexity and full-sequence convergence}

The KKT residual can be controlled by two consecutive primal steps.
Combining this estimate with the Lyapunov descent inequality yields the deterministic complexity result.  We then establish finite length and full-sequence convergence under compactness and the KL property.  For the generated primal--dual pairs, write
$\mathcal R_k:=\mathcal R(x_k,\lambda_k)$.

\begin{lemma}
\label{lem:kktres}
\leanDeclTag{Lemma_2_11.lean}{237}
\leanDeclTagEnd
Under Assumptions~\ref{ass:smooth} and~\ref{ass:safe}, let $N\ge2$ and suppose that the deterministic NR-LALM iterates remain in the regularity region up to iteration $N$. Then
\(\mathcal R_{k+1}^2
\le C_R(\norm{p_k}^2+\norm{p_{k-1}}^2)\) for \(1\le k\le N-1\),
where \(C_s:=\beta+\rho M\kappa_{\mathrm{lin}}\Delta+L_f+L_c\Lambda\)
and \(C_R:=C_s^2+C_{\lambda,p}/\rho^2\).
\end{lemma}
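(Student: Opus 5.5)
We bound the two components of $\mathcal R_{k+1}^2$ separately: the stationarity residual by a shifted form of the multiplier identity \eqref{eq:identity}, and the feasibility residual by the multiplier update together with Lemma~\ref{lem:dualinc}. Throughout, $1\le k\le N-1$. Since the iterates remain in the regularity region up to iteration $N$, Lemma~\ref{lem:invariant} gives $\norm{p_j}\le\Delta$ for $j\le N-1$ and $\norm{\lambda_j}\le\Lambda$ for $j\le N$. Also, \eqref{eq:linerrorbounds} applies to $d_{k-1}$ and $d_k$ because $[x_{k-1},x_k]$ and $[x_k,x_{k+1}]$ lie in $\mathcal U$.

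\emph{Feasibility term.} The multiplier update gives $\rho c(x_{k+1})=\lambda_{k+1}-\lambda_k$. Lemma~\ref{lem:dualinc} applies for $k\ge1$ when the region condition holds up to $k+1\le N$, so
\[
\norm{c(x_{k+1})}^2\le\frac{C_{\lambda,p}}{\rho^2}\bigl(\norm{p_k}^2+\norm{p_{k-1}}^2\bigr).
\]

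\emph{Stationarity term.} The stationarity residual sits at $x_{k+1}$, while \eqref{eq:identity} at index $k$ is evaluated at $x_k$. Since $x_{k+1}$ lies in $\mathcal U$ but the next step need not be controlled at $k=N-1$, I will not use \eqref{eq:identity} at index $k+1$. Instead I shift the base point of the identity at index $k$:
\[
\grad f(x_{k+1})+\grad c(x_{k+1})\lambda_{k+1}
=\bigl(\grad f(x_{k+1})-\grad f(x_k)\bigr)+\bigl(\grad c(x_{k+1})-\grad c(x_k)\bigr)\lambda_{k+1}-\beta p_k+\rho\grad c(x_k)d_k.
\]
The four terms are bounded by $L_f\norm{p_k}$, $L_c\Lambda\norm{p_k}$, $\beta\norm{p_k}$, and $\rho M\kappa_{\mathrm{lin}}\norm{p_k}^2\le\rho M\kappa_{\mathrm{lin}}\Delta\norm{p_k}$. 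Summing gives $\norm{\grad f(x_{k+1})+\grad c(x_{k+1})\lambda_{k+1}}\le C_s\norm{p_k}$, so its square is at most $C_s^2\norm{p_k}^2\le C_s^2(\norm{p_k}^2+\norm{p_{k-1}}^2)$.

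Adding the two squared bounds gives $\mathcal R_{k+1}^2\le C_R(\norm{p_k}^2+\norm{p_{k-1}}^2)$ with $C_R=C_s^2+C_{\lambda,p}/\rho^2$. The main obstacle is the index bookkeeping. One must choose the identity at index $k$, shifted to $x_{k+1}$, rather than the identity at $k+1$; this is what keeps every quantity used within the region guaranteed up to iteration $N$, namely the bound $\norm{\lambda_{k+1}}\le\Lambda$ (since $k+1\le N$) and the linearization estimates for $d_k$ and $d_{k-1}$. The restriction $k\ge1$ is needed only for the lagged term in Lemma~\ref{lem:dualinc}.
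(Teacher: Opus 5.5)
Your proof is correct and follows the paper's argument essentially verbatim. You rewrite the stationarity residual at $(x_{k+1},\lambda_{k+1})$ using \eqref{eq:identity} at index $k$ plus the objective-gradient and Jacobian differences, which gives the bound $C_s\norm{p_k}$, and you bound the feasibility term through $\rho c(x_{k+1})=\lambda_{k+1}-\lambda_k$ and Lemma~\ref{lem:dualinc}, with the same index bookkeeping ($\norm{\lambda_{k+1}}\le\Lambda$ for $k+1\le N$) that the paper's proof implicitly uses.
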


\begin{proof}
To bound the stationarity residual at $(x_{k+1},\lambda_{k+1})$, we combine \eqref{eq:identity} with the objective-gradient and Jacobian differences:
\begin{align*}
&\grad f(x_{k+1})+\grad c(x_{k+1})\lambda_{k+1}\\
&=-\beta p_k+\rho\grad c(x_k)d_k
  +\grad f(x_{k+1})-\grad f(x_k)
  +\big(\grad c(x_{k+1})-\grad c(x_k)\big)\lambda_{k+1}.
\end{align*}
Using \eqref{eq:linerrorbounds} and Lemma~\ref{lem:invariant},
\begin{equation}
\norm{\grad f(x_{k+1})
      +\grad c(x_{k+1})\lambda_{k+1}}
\le C_s\norm{p_k}.
\label{eq:statbound}
\end{equation}
The multiplier update and Lemma~\ref{lem:dualinc} give
\(\norm{c(x_{k+1})}^2
\le(C_{\lambda,p}/\rho^2)(\norm{p_k}^2+\norm{p_{k-1}}^2)\).
Combining the two inequalities proves the claim.
\end{proof}

\begin{theorem}
\label{thm:det}
\leanDeclTag{Theorem_2_12.lean}{337}
\leanDeclTagExtra{Theorem_2_12.lean}{423}
\leanDeclTagEnd
Let Assumptions~\ref{ass:smooth}, \ref{ass:safe}, and~\ref{ass:localization}
hold, and run NR-LALM in Algorithm~\ref{alg:lalm} with exact gradients.  For
\(K\ge2\), let
\(\widehat{k}\) be uniformly distributed over \(\{1,\ldots,K-1\}\).  Then
\begin{equation}
    \min_{1\le k\le K-1}\mathcal R_{k+1}^2
    \le \E[\mathcal{R}_{\widehat{k}+1}^2]
    \le \frac{C_{\mathrm{det}}}{K-1},
    \label{eq:detrate}
\end{equation}
where
\(C_{\mathrm{det}}
:=C_R[\Delta^2+8(\Phi_1-\underline\Phi)/\beta]\)
is independent of \(K\) and \(\eps\).  In particular, if
\(K-1\ge C_{\mathrm{det}}\eps^{-2}\), choose \(k_\star\) to minimize
\(\mathcal R_{k+1}\) over \(1\le k\le K-1\).  Then
\((x_{k_\star+1},\lambda_{k_\star+1})\) is an \(\eps\)-KKT pair.  Thus
an \(\eps\)-KKT pair is obtained using \(\cO(\eps^{-2})\) iterations,
\(\cO(\eps^{-2})\) first-order oracle evaluations, and
\(\cO(\eps^{-2})\) exact Jacobian-induced linear-system solves, with a fixed
penalty parameter and the classical ALM multiplier update.
\end{theorem}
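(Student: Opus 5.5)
The plan is to combine the localization result, the Lyapunov descent, and the KKT-residual bound into a telescoping argument. By Theorem~\ref{thm:localization}, the iterates remain in the regularity region up to every iteration $N$, and $\norm{p_k}\le\Delta$, $\norm{\lambda_k}\le\Lambda$ hold for all $k$. So we may apply Theorem~\ref{thm:lyap} and Lemma~\ref{lem:kktres} with $N=K$ (or any larger $N$), for all indices $1\le k\le K-1$.

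First, sum the Lyapunov inequality \eqref{eq:lyap} over $k=1,\ldots,K-1$. Using the lower bound \eqref{eq:lowerphi} on $\Phi_K$, this gives
\[
\frac{\beta}{4}\sum_{k=1}^{K-1}\norm{p_k}^2\le \Phi_1-\Phi_K\le\Phi_1-\underline\Phi,
\qquad\text{so}\qquad
\sum_{k=1}^{K-1}\norm{p_k}^2\le \frac{4(\Phi_1-\underline\Phi)}{\beta}.
\]
Second, sum Lemma~\ref{lem:kktres} over the same range:
\[
\sum_{k=1}^{K-1}\mathcal R_{k+1}^2\le C_R\Bigl(\sum_{k=1}^{K-1}\norm{p_k}^2+\sum_{k=0}^{K-2}\norm{p_k}^2\Bigr)
\le C_R\Bigl(\norm{p_0}^2+2\sum_{k=1}^{K-1}\norm{p_k}^2\Bigr).
\]
The lagged sum is bounded by $\norm{p_0}^2$ plus the main sum. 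Since $\norm{p_0}\le\Delta$, the right-hand side is at most $C_R[\Delta^2+8(\Phi_1-\underline\Phi)/\beta]=C_{\mathrm{det}}$.

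Dividing by $K-1$ identifies the average with $\E[\mathcal R_{\widehat k+1}^2]$ for uniform $\widehat k$. The minimum is bounded by the average, which proves \eqref{eq:detrate}. The constant $C_{\mathrm{det}}$ is independent of $K$ and $\eps$, because $C_R$, $\Delta$ and $\beta$ are fixed and $\Phi_1\le\overline\Phi_1$ by \eqref{eq:localizedbounds}.

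For the corollary, take $K-1\ge C_{\mathrm{det}}\eps^{-2}$. Then $\mathcal R_{k_\star+1}\le\eps$, and by the remark after \eqref{eq:kkt-residual} the pair $(x_{k_\star+1},\lambda_{k_\star+1})$ is an $\eps$-KKT pair. For the oracle count, each iteration uses one gradient, one Jacobian, one new residual and one SPD solve. The total over $K=\cO(\eps^{-2})$ iterations is therefore $\cO(\eps^{-2})$ of each.

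The only delicate point is the index bookkeeping for the lagged term $\norm{p_{k-1}}^2$ at $k=1$, which involves $p_0$. This is why $\Delta^2$ appears in $C_{\mathrm{det}}$, and why the descent sum has to start at $k=1$, where $\Phi_k$ is defined. Everything else is a direct assembly of the earlier results.
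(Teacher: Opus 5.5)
Your proposal is correct and matches the paper's proof essentially step for step. Both arguments telescope \eqref{eq:lyap} against the lower bound $\underline\Phi$, sum Lemma~\ref{lem:kktres} while absorbing the lagged sum via $\norm{p_0}\le\Delta$, and bound the minimum by the average. Your explicit appeal to Theorem~\ref{thm:localization} to certify the regularity-region hypothesis for all $N$ is exactly what the paper uses implicitly.
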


\begin{proof}
By Theorem~\ref{thm:lyap},
\((\beta/4)\sum_{k=1}^{K-1}\norm{p_k}^2
\le \Phi_1-\underline\Phi\).
Since \(\norm{p_0}\le\Delta\), Lemma~\ref{lem:kktres} gives
\begin{align*}
    \sum_{k=1}^{K-1}\mathcal{R}_{k+1}^2
    &\le C_R\left(
       \sum_{k=1}^{K-1}\norm{p_k}^2
       +\sum_{k=1}^{K-1}\norm{p_{k-1}}^2\right)\\
    &\le C_R\left(\Delta^2+
       2\sum_{k=1}^{K-1}\norm{p_k}^2\right)
    \le C_{\mathrm{det}}.
\end{align*}
Dividing by \(K-1\) proves the randomized bound.  The minimum is no
larger than the average, which proves the first inequality in
\eqref{eq:detrate} and the asserted pointwise \(\eps\)-KKT guarantee.
\end{proof}

Theorem~\ref{thm:det} establishes the deterministic complexity guarantees without compactness or a KL assumption.  To obtain convergence of the entire primal--dual sequence, we now impose these additional assumptions and establish that the trajectory has finite length.  The KL analysis must account for the two-step multiplier bound in Lemma~\ref{lem:dualinc}, which couples $p_k$ and
$p_{k-1}$.  To accommodate this dependence, we retain the preceding primal step in a lifted state and define the following Lyapunov function:
\begin{equation}
    \mathcal E(x,\lambda,w)
    :=
    \cL_\rho(x,\lambda)
    +\frac{\beta}{4}\norm{w}^2,
    \qquad
    u_k:=(x_k,\lambda_k,p_{k-1}),
    \quad k\geq1,
    \label{eq:KLenergy}
\end{equation}
Write $\mathcal E_k:=\mathcal E(u_k)$.  The coefficient $\beta/4$ is chosen so that the resulting descent estimate controls both
$\norm{p_k}^2$ and $\norm{p_{k-1}}^2$.

Before stating the convergence result, we recall the differentiable KL property.  Let $\mathcal D\subseteq\R^q$ be open.  A continuously
differentiable function $h:\mathcal D\to\R$ has the KL property at
$\bar z\in\mathcal D$ if there exist a neighborhood $\mathcal V\subseteq\mathcal D$ of $\bar z$, $\eta>0$, and a continuous
concave function $\varphi:[0,\eta)\to[0,\infty)$ satisfying
$\varphi(0)=0$, $\varphi\in C^1((0,\eta))$, and $\varphi'(s)>0$ for $s\in(0,\eta)$ such that
$$
    \varphi'\bigl(h(z)-h(\bar z)\bigr)\norm{\grad h(z)}\geq1
$$
whenever $z\in\mathcal V$ and $0<h(z)-h(\bar z)<\eta$.  The function
$\varphi$ is called a desingularizing function.  If $h$ is constant on a compact set and has the KL property at each of its points, the standard uniformization lemma supplies a single desingularizing function and constants that are valid in a neighborhood of the entire set~\cite{AttouchBolteSvaiter2013}.  The proof below applies this lemma to the cluster set of $\{u_k\}$.

\Needspace{9\baselineskip}

\begin{theorem}
\label{thm:KL}
\leanDeclTag{Theorem_2_13.lean}{804}
\leanDeclTagExtra{Theorem_2_13.lean}{1220}
\leanDeclTagEnd
Let Assumptions~\ref{ass:smooth}, \ref{ass:safe}, and~\ref{ass:localization}
hold, and run NR-LALM with
exact gradients.  Assume also that
\(\mathcal S_{\mathrm{det}}\) is compact
and that the function \(\mathcal E\) defined in \eqref{eq:KLenergy} has the
KL property at every point of the cluster set
\(\Omega:=\{\bar u:\text{there is a subsequence }u_{k_j}\to\bar u\}\).
Then the primal--dual trajectory has finite length:
\begin{equation}
 \sum_{k=0}^{\infty}
 \left(\norm{x_{k+1}-x_k}
       +\norm{\lambda_{k+1}-\lambda_k}\right)<\infty,
 \label{eq:KLoriginallength}
\end{equation}
and the entire sequence $(x_k,\lambda_k)$ converges to a KKT pair
$(x_\star,\lambda_\star)$ of \eqref{eq:problem}.  Moreover,
$u_k\to(x_\star,\lambda_\star,0)$.
\end{theorem}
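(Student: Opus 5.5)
The argument follows the standard Attouch--Bolte--Svaiter template applied to the lifted energy $\mathcal E$. It has three ingredients: sufficient decrease, a relative-error bound on $\grad\mathcal E$, and continuity along the sequence. All of them rely on the localization in Theorem~\ref{thm:localization}, which gives $\norm{p_k}\le\Delta$, $\norm{\lambda_k}\le\Lambda$ and $x_k\in\mathcal S_{\mathrm{det}}$. Compactness of $\mathcal S_{\mathrm{det}}$ then makes $\{u_k\}$ bounded, so $\Omega$ is nonempty and compact.

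\emph{Sufficient decrease.} Combining Lemma~\ref{lem:primaldescent} (with its sharper constant $5\beta/8$), \eqref{eq:dualincrease} and Lemma~\ref{lem:dualinc} gives
$\cL_\rho(x_{k+1},\lambda_{k+1})\le\cL_\rho(x_k,\lambda_k)-\tfrac{5\beta}{8}\norm{p_k}^2+\tfrac{C_{\lambda,p}}{\rho}(\norm{p_k}^2+\norm{p_{k-1}}^2)$. The last inequality in \eqref{eq:safe} gives $C_{\lambda,p}/\rho\le\beta/8$. Adding $\tfrac\beta4\norm{p_k}^2-\tfrac\beta4\norm{p_{k-1}}^2$ then yields
$\mathcal E_{k+1}\le\mathcal E_k-\tfrac{\beta}{4}\norm{p_k}^2-\tfrac{\beta}{8}\norm{p_{k-1}}^2$, which controls both steps. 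Since $\mathcal E_k\ge\underline\Phi$ by \eqref{eq:lowerphi}, the sequence $\mathcal E_k$ decreases to a limit $\mathcal E^\ast$, and $\sum\norm{p_k}^2<\infty$.

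\emph{Relative error.} Next I compute $\grad\mathcal E(u_{k+1})$. Its $x$-block is $\grad f(x_{k+1})+\grad c(x_{k+1})(\lambda_{k+1}+\rho c(x_{k+1}))$. By \eqref{eq:statbound} and $\rho c(x_{k+1})=\lambda_{k+1}-\lambda_k$, together with Lemma~\ref{lem:dualinc}, this block is $\cO(\norm{p_k}+\norm{p_{k-1}})$. The $\lambda$-block is $c(x_{k+1})$, bounded in the same way, and the $w$-block is $\tfrac\beta2 p_k$. Hence $\norm{\grad\mathcal E(u_{k+1})}\le b(\norm{p_k}+\norm{p_{k-1}})$ for some constant $b$.

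\emph{Limit points and the KL argument.} Because $\mathcal E$ is continuous, $\mathcal E\equiv\mathcal E^\ast$ on $\Omega$. Since $p_k\to0$, the relative-error bound gives $\grad\mathcal E(\bar u)=0$ for every $\bar u\in\Omega$, and every $\bar u$ has the form $(\bar x,\bar\lambda,0)$. Stationarity of $\mathcal E$ in $\lambda$ gives $c(\bar x)=0$, and then the $x$-block gives $\grad f(\bar x)+\grad c(\bar x)\bar\lambda=0$, so $\bar u$ corresponds to a KKT pair. If $\mathcal E_k=\mathcal E^\ast$ for some $k$, the decrease inequality forces the steps to vanish from then on and the result is immediate, so assume $\mathcal E_k>\mathcal E^\ast$ for all $k$. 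The uniformized KL lemma applied on $\Omega$ then holds for all large $k$, because $\operatorname{dist}(u_k,\Omega)\to0$. Let $\delta_k:=\varphi(\mathcal E_k-\mathcal E^\ast)-\varphi(\mathcal E_{k+1}-\mathcal E^\ast)$ and $a_k:=\norm{p_k}+\norm{p_{k-1}}$. Concavity of $\varphi$, the KL inequality and the decrease estimate give $a_k^2\lesssim \delta_k\, a_{k-1}$. By AM--GM this becomes $2a_k\le a_{k-1}/2+C\delta_k$, a form whose constants need care because of the two-index coupling (this is why the lifted state and the $\beta/8$ margin are used). Summing telescopes and gives $\sum\norm{p_k}<\infty$. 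Lemma~\ref{lem:dualinc} then gives $\sum\norm{\lambda_{k+1}-\lambda_k}\le\sqrt{C_{\lambda,p}}\sum(\norm{p_k}+\norm{p_{k-1}})<\infty$, which is \eqref{eq:KLoriginallength}. So $(x_k,\lambda_k)$ is Cauchy and converges to a limit, which lies in $\Omega$ and is therefore a KKT pair; moreover $p_{k-1}\to0$, so $u_k\to(x_\star,\lambda_\star,0)$.

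The main obstacle is the KL step. The gradient bound at $u_{k+1}$ involves $p_{k-1}$ while the decrease involves $p_k$ and $p_{k-1}$, so the indices must be aligned carefully to obtain a contraction-type recursion for $a_k$. If the direct AM--GM form fails, I would instead work with $\sqrt{\norm{p_k}^2+\norm{p_{k-1}}^2}$ as the descent quantity and use a one-step index shift.
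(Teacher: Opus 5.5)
Your proposal is correct and follows the paper's proof: the same lifted energy $\mathcal E$, sufficient decrease in $\norm{p_k}^2+\norm{p_{k-1}}^2$, a relative-error bound from \eqref{eq:statbound} and $\rho c(x_k)=\lambda_k-\lambda_{k-1}$, uniformized KL on $\Omega$, and a telescoping AM--GM recursion. The only difference is bookkeeping: the paper tracks $\ell_k=\norm{p_{k-1}}+\norm{\lambda_k-\lambda_{k-1}}$, whereas you track $\norm{p_k}+\norm{p_{k-1}}$ and recover the multiplier increments afterwards from Lemma~\ref{lem:dualinc}; your direct recursion $a_k^2\lesssim\delta_k a_{k-1}$ already closes, so the fallback you mention is not needed.
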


\begin{proof}
Set
\(\delta_{\mathrm{KL}}=\beta/4-C_{\lambda,p}/\rho\ge\beta/8\).
Combining Lemmas~\ref{lem:primaldescent} and~\ref{lem:dualinc} with
\eqref{eq:dualincrease} and adding the memory term in \eqref{eq:KLenergy}
gives
\begin{equation}
 \mathcal E_k-\mathcal E_{k+1}
 \ge\delta_{\mathrm{KL}}
 \left(\norm{p_k}^2+\norm{p_{k-1}}^2\right).
 \label{eq:KLdescent}
\end{equation}
The localization bounds and completion of the square imply that
\(\mathcal E_k\ge \underline{f}-\Lambda^2/(2\rho)\).  Hence
\(\mathcal E_k\downarrow\mathcal E_\star\) and
\begin{equation}
 \sum_{k\ge1}\norm{p_k}^2<\infty,\qquad p_k\to0.
 \label{eq:KLsquaresum}
\end{equation}
Let \(\ell_k=\norm{p_{k-1}}+\norm{\lambda_k-\lambda_{k-1}}\).  Equations \eqref{eq:dualinc} and \eqref{eq:KLdescent} yield
\begin{align}
 \ell_{k+1}^2
 &\le2(1+C_{\lambda,p})
       \left(\norm{p_k}^2+\norm{p_{k-1}}^2\right),
 \label{eq:KLlengthcontrol}\\
 \mathcal E_k-\mathcal E_{k+1}
 &\ge a_{\mathrm{KL}}\ell_{k+1}^2,\qquad
 a_{\mathrm{KL}}:=
 \frac{\delta_{\mathrm{KL}}}{2(1+C_{\lambda,p})}>0.
 \label{eq:KLlengthdescent}
\end{align}
Moreover, \eqref{eq:statbound} with its index shifted by one and
\(\rho c(x_k)=\lambda_k-\lambda_{k-1}\) give, for \(k\ge2\),
\begin{equation}
 \norm{\grad\mathcal E(u_k)}
 \le\left(C_s+\frac\beta2\right)\norm{p_{k-1}}
    +\left(M+\frac1\rho\right)
      \norm{\lambda_k-\lambda_{k-1}} 
 \le b_{\mathrm{KL}}\ell_k,
 \label{eq:KLrelativeerror}
\end{equation}
where \(b_{\mathrm{KL}}=C_s+\beta/2+M+1/\rho\).

By Theorem~\ref{thm:localization} and the assumed compactness, every \(u_k\)
lies in the compact product set
\(\mathcal S_{\mathrm{det}}
\times\{\lambda:\norm{\lambda}\le\Lambda\}
\times\{p:\norm{p}\le\Delta\}\).  Thus its cluster set \(\Omega\) is
nonempty and compact, \(\operatorname{dist}(u_k,\Omega)\to0\), and continuity
of \(\mathcal E\), together with \(\mathcal E_k\downarrow\mathcal E_\star\),
gives \(\mathcal E=\mathcal E_\star\) on \(\Omega\).  Equations
\eqref{eq:dualinc}, \eqref{eq:KLsquaresum}, and
\eqref{eq:KLrelativeerror} also give
\(\lambda_k-\lambda_{k-1}\to0\) and
\(\grad\mathcal E(u_k)\to0\).
If \(\mathcal E_k=\mathcal E_\star\) for some \(k\), then
\eqref{eq:KLdescent}, \eqref{eq:dualinc}, and \eqref{eq:identity} show
that the tail is constant at a KKT pair.

Otherwise, set
\(\zeta_k=\mathcal E_k-\mathcal E_\star>0\).  Uniformization of the KL
property on \(\Omega\) gives \(k_0\) and \(\varphi\) such that, for
\(k\ge k_0\), concavity, \eqref{eq:KLlengthdescent}, and
\eqref{eq:KLrelativeerror} imply
\(\varphi(\zeta_k)-\varphi(\zeta_{k+1})
\ge a_{\mathrm{KL}}\ell_{k+1}^2/(b_{\mathrm{KL}}\ell_k)\).
Here \(\ell_k>0\), since otherwise the KL inequality would contradict
\(\zeta_k>0\).  The arithmetic--geometric mean inequality gives
\begin{equation}
 \ell_{k+1}\le\frac12\ell_k+
 \frac{b_{\mathrm{KL}}}{2a_{\mathrm{KL}}}
 [\varphi(\zeta_k)-\varphi(\zeta_{k+1})].
 \label{eq:KLrecursion}
\end{equation}
Summation and telescoping yield
\(\sum_{k=k_0}^{N}\ell_{k+1}\le
\ell_{k_0}+(b_{\mathrm{KL}}/a_{\mathrm{KL}})
\varphi(\zeta_{k_0})\), uniformly in \(N\).  Thus
\eqref{eq:KLoriginallength} holds because \(x_{k+1}-x_k=p_k\).  Hence
both primal and multiplier sequences are Cauchy.  Their limit satisfies
feasibility by the multiplier update and stationarity by
\eqref{eq:statbound}; hence it is a KKT pair, and
\(p_{k-1}\to0\) gives the asserted convergence of \(u_k\).
\end{proof}

The KL assumption is satisfied, in particular, when $f$ and $c$ are
semialgebraic.  Indeed, $\mathcal E$ is then semialgebraic and hence has the KL property ~\cite{BolteDaniilidisLewis2007,AttouchBolteSvaiter2013}.

\section{Stochastic linearized augmented Lagrangian method}
\label{sec:stochastic}

We now replace the exact objective gradient in NR-LALM by a projected SPIDER estimator.  The direct stochastic analysis requires $[x_k,x_{k+1}]\subset\mathcal U$ at each iteration; this condition is automatic when $\mathcal U=\R^n$.  Under local regularity, we control the finite-horizon exit probability through a stopped-process argument and obtain an unconditional expected guarantee by safeguarded restarts based on an exact membership test for a prescribed localization set.  In addition to
Assumptions~\ref{ass:smooth} and~\ref{ass:safe}, we impose standard
unbiasedness and bounded-variance conditions~\cite{GhadimiLan2013}, together with the mean-squared smoothness condition required by the SPIDER recursion.

\begin{assumption}
\label{ass:stoch}
\leanDeclTag{Assumption_3_1/Oracle.lean}{19}
\leanDeclTagEnd
Suppose that \(f(x)=\E[F(x;\xi)]\) for \(x\in\mathcal U\).  All random
variables introduced below are assumed to be measurable.
For almost every \(\xi\), \(F(\cdot;\xi)\) is differentiable on \(\mathcal U\).
There exist constants \(\sigma_f,L_s\ge0\) such that:
\begin{enumerate}[label=\textup{(\roman*)},leftmargin=2.5em,itemsep=0.15em,topsep=0.25em]
\item \(\E[\grad F(x;\xi)]=\grad f(x)\) and
      \(\E\norm{\grad F(x;\xi)-\grad f(x)}^2\le\sigma_f^2\) for all
      \(x\in\mathcal U\);
\item \(\E\norm{\grad F(x;\xi)-\grad F(y;\xi)}^2
      \le L_s^2\norm{x-y}^2\) for all \(x,y\in\mathcal U\).
\end{enumerate}
\end{assumption}

Condition~\textup{(ii)} is the mean-squared smoothness condition used in variance-reduced stochastic methods~\cite{ShiWangWang2025}. The same sample \(\xi\) is used at \(x\) and \(y\), as required by the SPIDER difference estimator.  By condition~\textup{(i)}, Jensen's inequality, and condition~\textup{(ii)}, \(\grad f\) is \(L_s\)-Lipschitz continuous on \(\mathcal U\), without requiring a uniform sample-wise Lipschitz constant. For a random output, we measure approximate stationarity in mean square, following the convention used in stochastic constrained optimization \cite{ShiWangWang2025,LiEtAl2024Stoc}.

\begin{definition}
\label{def:stoch-eps-kkt}
\leanDeclTag{Definition_3_2/Stochastic.lean}{66}
\leanDeclTagEnd
Given \(\eps\ge0\), a random point \(x\) is a stochastic \(\eps\)-KKT point if there exists a random multiplier \(\lambda\), defined on the same probability space, such that \(\E\norm{\grad f(x)+\grad c(x)\lambda}^2\le\eps^2\) and \(\E\norm{c(x)}^2\le\eps^2\).  A specified random pair \((x,\lambda)\) satisfying these inequalities is called a stochastic \(\eps\)-KKT pair.
\end{definition}

Since \(\mathcal R(x,\lambda)^2\) is the sum of the two squared residuals, the single bound \(\E[\mathcal R(x,\lambda)^2]\le\eps^2\) implies the conditions in Definition~\ref{def:stoch-eps-kkt}.  Conversely, every stochastic \(\eps\)-KKT pair satisfies \(\E[\mathcal R(x,\lambda)^2]\le2\eps^2\).  Moreover, Jensen's inequality
gives \(\E[\mathcal R(x,\lambda)]\le\eps\) under this single residual bound~\cite{AlacaogluWright2024,CuiEtAl2026}.

Fix a positive integer horizon \(K\) and positive integers \(Q,B,b\).  Let \(\{\mathcal G_k\}_{k\ge0}\) be the filtration generated by the algorithmic history before the \(k\)th batch is sampled.  Then \(x_k\) and, at noncheckpoint iterations, \(h_{k-1}\) are \(\mathcal G_k\)-measurable.  At iteration \(k\), let \(\{\xi_{k,i}\}\) be a batch of independent copies of \(\xi\), independent of \(\mathcal G_k\), with size \(B\) at checkpoints and
\(b\) otherwise.  The raw SPIDER estimator~\cite{FangEtAl2018} is
\begin{equation}
h_k=
\begin{cases}
\displaystyle \frac1B\sum_{i=1}^{B}\grad F(x_k;\xi_{k,i}),
    & k\equiv0\pmod Q,\\[2mm]
\displaystyle h_{k-1}+\frac1b\sum_{i=1}^{b}
\big[\grad F(x_k;\xi_{k,i})-\grad F(x_{k-1};\xi_{k,i})\big],
    & \text{otherwise}.
\end{cases}
\label{eq:spider}
\end{equation}
At noncheckpoint iterations, the same sample \(\xi_{k,i}\) is used in the two gradient evaluations forming each difference.

The raw estimator need not be pathwise bounded, so we project it before the primal step.  We assume that the bound \(G\) in Assumption~\ref{ass:smooth} is available to the stochastic algorithm; any larger valid bound may be used,
provided that the same value is used in the conditions of Assumption~\ref{ass:safe}. Set \(v_k:=\Proj_{\{w\in\R^n:\norm{w}\le G\}}(h_k)\), so that \(\norm{v_k}\le G\) pathwise, and define the raw and projected errors by \(\widehat e_k:=h_k-\grad f(x_k)\) and
\(e_k:=v_k-\grad f(x_k)\), respectively.  Whenever \(x_k\in\mathcal U\), Assumption~\ref{ass:smooth} gives \(\norm{\grad f(x_k)}\le G\); hence nonexpansiveness of the Euclidean projection gives
\begin{equation}
    \norm{e_k}\le \norm{\widehat e_k}.
    \label{eq:projectionerror}
\end{equation}

The stochastic NR-LALM replaces \(\grad f(x_k)\) in Algorithm~\ref{alg:lalm} by \(v_k\); all other primal and multiplier updates remain unchanged.  Its model optimality condition gives
\begin{equation}
    v_k+\grad c(x_k)\lambda_{k+1}+\beta p_k
    =\rho\grad c(x_k)d_k.
    \label{eq:stoch-identity}
\end{equation}
For the stochastic analysis up to horizon $K$, assume that, outside a single null event, $[x_k,x_{k+1}]\subset\mathcal U$ for $0\le k\le K-1$. Since \(\norm{v_k}\le G\),
the proof of Lemma~\ref{lem:invariant} applies to each such sample path with
\(\grad f(x_k)\) replaced by \(v_k\).  Consequently, $\norm{p_k}\le\Delta$ and $\norm{\lambda_k}\le\Lambda$ almost surely.  In particular, the step square-integrability required in
Lemma~\ref{lem:spider} is automatic.

\begin{lemma}
\label{lem:spider}
\leanDeclTag{Lemma_3_3.lean}{1532}
\leanDeclTagEnd
Under Assumptions~\ref{ass:smooth} and~\ref{ass:stoch}, suppose that, almost surely, the stochastic iterates remain in the regularity region up to iteration $K$ and that
\(\E\norm{p_k}^2<\infty\) for \(0\le k<K\). Then
\begin{equation}
    \sum_{k=0}^{K-1}\E\norm{e_k}^2
    \le \frac{K\sigma_f^2}{B}
       +\frac{QL_s^2}{b}
        \sum_{k=0}^{K-1}\E\norm{p_k}^2.
    \label{eq:spidersum}
\end{equation}
\end{lemma}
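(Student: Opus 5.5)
The proof follows the standard SPIDER variance recursion, applied epoch by epoch. Using \eqref{eq:projectionerror}, it suffices to bound the raw errors \(\widehat e_k\), since \(x_k\in\mathcal U\) almost surely gives \(\E\norm{e_k}^2\le\E\norm{\widehat e_k}^2\). I would first treat a checkpoint \(k\equiv0\pmod Q\). There \(\widehat e_k\) is an average of \(B\) independent, conditionally mean-zero terms \(\grad F(x_k;\xi_{k,i})-\grad f(x_k)\), each independent of \(\mathcal G_k\). Conditioning on \(\mathcal G_k\) and using Assumption~\ref{ass:stoch}(i) at the \(\mathcal G_k\)-measurable point \(x_k\in\mathcal U\) then gives \(\E\norm{\widehat e_k}^2\le\sigma_f^2/B\).

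At a noncheckpoint iteration, write
\[
\widehat e_k=\widehat e_{k-1}+\frac1b\sum_{i=1}^b\zeta_{k,i},\qquad
\zeta_{k,i}:=\grad F(x_k;\xi_{k,i})-\grad F(x_{k-1};\xi_{k,i})-\bigl(\grad f(x_k)-\grad f(x_{k-1})\bigr).
\]
Conditionally on \(\mathcal G_k\), the \(\zeta_{k,i}\) are i.i.d.\ with mean zero and \(\widehat e_{k-1}\) is fixed. The cross term therefore vanishes in expectation, and the variance of an average satisfies \(\E[\norm{\zeta}^2\mid\mathcal G_k]\le\E[\norm{\grad F(x_k;\xi)-\grad F(x_{k-1};\xi)}^2\mid\mathcal G_k]\le L_s^2\norm{p_{k-1}}^2\) by Assumption~\ref{ass:stoch}(ii), since both endpoints lie in \(\mathcal U\). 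This yields
\[
\E\norm{\widehat e_k}^2\le\E\norm{\widehat e_{k-1}}^2+\frac{L_s^2}{b}\E\norm{p_{k-1}}^2 .
\]
For the conditional computations to be legitimate we need integrability: \(\E\norm{p_{k-1}}^2<\infty\) is assumed, and \(\widehat e_k\in L^2\) follows by induction from the checkpoint bound and this recursion.

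Next, unroll the recursion within each epoch. If \(k\) lies in the epoch starting at the checkpoint \(k_0\le k<k_0+Q\), then
\[
\E\norm{\widehat e_k}^2\le\frac{\sigma_f^2}{B}+\frac{L_s^2}{b}\sum_{j=k_0}^{k-1}\E\norm{p_j}^2 .
\]
Summing over \(k\) in the epoch, each \(\E\norm{p_j}^2\) with \(j\) in the epoch appears at most \(Q-1\le Q\) times. Summing over all epochs intersecting \(\{0,\dots,K-1\}\), noting that only indices \(j\le K-2\) appear, produces \(K\sigma_f^2/B+(QL_s^2/b)\sum_{k=0}^{K-1}\E\norm{p_k}^2\), which is \eqref{eq:spidersum}.

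I expect the main obstacle to be the measure-theoretic justification rather than the algebra. The sample-wise bound in Assumption~\ref{ass:stoch}(ii) is stated for deterministic \(x,y\), but we apply it at the random \(\mathcal G_k\)-measurable points \(x_k,x_{k-1}\). This needs a freezing (substitution) lemma: because the batch is independent of \(\mathcal G_k\), the conditional expectation equals the unconditional one evaluated at the frozen points. It also needs the almost-sure event \([x_k,x_{k+1}]\subset\mathcal U\) so that the bounds apply at those points. I would state this freezing lemma once and use it for both the checkpoint and difference steps.
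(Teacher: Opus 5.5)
Your proposal is correct and follows the paper's proof essentially step for step: the checkpoint bound $\sigma_f^2/B$, the recursion for $\widehat e_k$ built from the same mean-zero differences $\zeta_{k,i}$, unrolling within each epoch, counting each $\E\norm{p_j}^2$ at most $Q$ times, and transferring the bound to $e_k$ via \eqref{eq:projectionerror}. Your explicit appeal to a freezing lemma at the random points $x_k,x_{k-1}$, and your attention to the $L^2$-integrability needed for the cross terms to vanish (which is where the hypothesis $\E\norm{p_k}^2<\infty$ enters), only spell out what the paper compresses into ``conditional independence.''
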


\begin{proof}
At a checkpoint, conditional independence and
Assumption~\ref{ass:stoch}\textup{(i)} give
$$
\E[\norm{\widehat e_k}^2\mid\mathcal G_k]
=\frac{1}{B^2}\sum_{i=1}^B
  \E[\norm{\grad F(x_k;\xi_{k,i})-\grad f(x_k)}^2\mid\mathcal G_k]
\le\frac{\sigma_f^2}{B}.
$$
At a noncheckpoint iteration, set
$$
\zeta_{k,i}:=\grad F(x_k;\xi_{k,i})-\grad F(x_{k-1};\xi_{k,i})
-\big(\grad f(x_k)-\grad f(x_{k-1})\big).
$$
Then \(\E[\zeta_{k,i}\mid\mathcal G_k]=0\) and
\(\widehat e_k=\widehat e_{k-1}+b^{-1}\sum_{i=1}^b\zeta_{k,i}\).
The conditional cross terms therefore vanish, while
Assumption~\ref{ass:stoch}\textup{(ii)} and the conditional variance identity yield
\begin{align*}
\E[\norm{\widehat e_k}^2\mid\mathcal G_k]
&=\norm{\widehat e_{k-1}}^2
  +\frac{1}{b^2}\sum_{i=1}^b
    \E[\norm{\zeta_{k,i}}^2\mid\mathcal G_k]\\
&\le\norm{\widehat e_{k-1}}^2
  +\frac{L_s^2}{b}\norm{x_k-x_{k-1}}^2
=\norm{\widehat e_{k-1}}^2
  +\frac{L_s^2}{b}\norm{p_{k-1}}^2.
\end{align*}
Taking expectations and writing
\(s(k):=Q\lfloor k/Q\rfloor\) for the most recent checkpoint, iterating
within the current epoch gives
$$
\E\norm{\widehat e_k}^2
\le \frac{\sigma_f^2}{B}
  +\frac{L_s^2}{b}\sum_{j=s(k)}^{k-1}\E\norm{p_j}^2.
$$
After summing over \(k=0,\ldots,K-1\), each \(\E\norm{p_j}^2\) in the
double sum is counted at most \(Q\) times.  Hence
$$
\sum_{k=0}^{K-1}\E\norm{\widehat e_k}^2
\le \frac{K\sigma_f^2}{B}
  +\frac{QL_s^2}{b}\sum_{k=0}^{K-1}\E\norm{p_k}^2.
$$
Finally, \eqref{eq:projectionerror} transfers this bound from the raw errors
\(\widehat e_k\) to the projected errors \(e_k\).
\end{proof}

The estimator error contributes \(-\ip{e_k}{p_k}\) to the deterministic
descent calculation.  Since \(v_k=\grad f(x_k)+e_k\), Young's inequality
bounds this term by \((\beta/8)\norm{p_k}^2\) plus
\((2/\beta)\norm{e_k}^2\), and hence
$$
    \cL_\rho(x_{k+1},\lambda_k)
    \le\cL_\rho(x_k,\lambda_k)
       -\frac{\beta}{2}\norm{p_k}^2
       +\frac{2}{\beta}\norm{e_k}^2.
$$

For \(k\ge1\), subtracting \eqref{eq:stoch-identity} at \(k-1\) from that
at \(k\), and using
\(v_k-v_{k-1}
=\grad f(x_k)-\grad f(x_{k-1})+e_k-e_{k-1}\), gives
$$
\sigma\norm{\lambda_{k+1}-\lambda_k}
\le A_p\norm{p_k}+B_p\norm{p_{k-1}}
   +\norm{e_k}+\norm{e_{k-1}}.
$$
Set $C_{\lambda,e}:=4/\sigma^2$.  Squaring the preceding estimate yields
\begin{equation}
\norm{\lambda_{k+1}-\lambda_k}^2
\le C_{\lambda,p}\bigl(\norm{p_k}^2+\norm{p_{k-1}}^2\bigr)
+C_{\lambda,e}\bigl(\norm{e_k}^2+\norm{e_{k-1}}^2\bigr).
\label{eq:stoch-dualinc}
\end{equation}
Here we used $(a+b+c+d)^2\le4(a^2+b^2+c^2+d^2)$. Set \(C_e^\star:=2/\beta+C_{\lambda,e}/\rho\).  Combining the stochastic
primal bound, \eqref{eq:dualincrease}, and \eqref{eq:stoch-dualinc} with the
memory term in \eqref{eq:potential}, and then using \eqref{eq:safe}, gives the following bound for any $k\ge1$ on a sample path whose iterates remain in the regularity region up to iteration $k+1$:
\begin{equation}
    \Phi_{k+1}
    \le \Phi_k-\frac{\beta}{4}\norm{p_k}^2
       +C_e^\star
        \big(\norm{e_k}^2+\norm{e_{k-1}}^2\big).
    \label{eq:stoch-lyap}
\end{equation}
If, almost surely, the stochastic iterates remain in the regularity region up to iteration $K$, Lemma~\ref{lem:invariant} gives $\norm{\lambda_k}\le\Lambda$ almost surely for $1\le k\le K$. The completion-of-the-square argument in \eqref{eq:lowerphi} then yields $\Phi_k\ge\underline\Phi$ almost surely over the same range.

Thus, \eqref{eq:spidersum} controls the accumulated estimator error by the
step energy, whereas \eqref{eq:stoch-lyap} controls the step energy by the
estimator error.  To close these two estimates, for \(K\ge2\), define
$$
P_K:=\sum_{k=0}^{K-1}\E\norm{p_k}^2,
\qquad
E_K:=\sum_{k=0}^{K-1}\E\norm{e_k}^2,
$$
and set
$$
D_0:=\Delta^2+\frac{4(\overline\Phi_1-\underline\Phi)}{\beta},
\qquad
D_1:=\frac{8C_e^\star}{\beta}.
$$

\begin{lemma}
\label{lem:coupled}
\leanDeclTag{Lemma_3_4.lean}{2219}
\leanDeclTagExtra{Lemma_3_4.lean}{2241}
\leanDeclTagEnd
Under Assumptions~\ref{ass:smooth} and~\ref{ass:safe}, let $K\ge2$ and suppose that, almost surely, the stochastic NR-LALM iterates remain in the regularity region up to iteration $K$. Then
$$
P_K\le D_0+D_1E_K.
$$
If, in addition, Assumption~\ref{ass:stoch} holds and the projected SPIDER estimator is used with \(b\ge2D_1QL_s^2\), then
$$
\frac{E_K}{K}
\le\frac{2\sigma_f^2}{B}+\frac{D_0}{D_1K},
\qquad
\frac{P_K}{K}
\le\frac{2D_1\sigma_f^2}{B}+\frac{2D_0}{K}.
$$
\end{lemma}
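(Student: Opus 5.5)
The first bound comes from summing the stochastic Lyapunov inequality \eqref{eq:stoch-lyap} over $k=1,\ldots,K-1$ and taking expectations. On the almost-sure event that the iterates remain in the regularity region up to iteration $K$, each summand is integrable because $\norm{p_k}\le\Delta$ and $\norm{\lambda_k}\le\Lambda$ almost surely. Telescoping gives
\[
\frac{\beta}{4}\sum_{k=1}^{K-1}\E\norm{p_k}^2\le \E\Phi_1-\E\Phi_K+C_e^\star\sum_{k=1}^{K-1}\bigl(\E\norm{e_k}^2+\E\norm{e_{k-1}}^2\bigr).
\]
We then use the almost-sure lower bound $\Phi_K\ge\underline\Phi$ established just before the lemma. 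For the initial term, we bound $\Phi_1\le\overline\Phi_1$ pathwise by repeating the first step of the proof of Theorem~\ref{thm:localization} with $\grad f(x_0)$ replaced by $v_0$. Here $\norm{v_0}\le G$ holds by projection, $\norm{p_0}\le\Delta$, $\norm{\lambda_1}\le\Lambda$, and $\rho\norm{c(x_1)}\le2\Lambda$. The descent term from $x_0$ to $x_1$ is bounded by $f(x_1)\le f(x_0)+G\Delta$, using $\norm{\grad f}\le G$ on the segment $[x_0,x_1]\subset\mathcal U$.

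Each $\E\norm{e_j}^2$ with $0\le j\le K-1$ appears at most twice in the error sum, so the error sum is at most $2E_K$. Adding $\E\norm{p_0}^2\le\Delta^2$ to both sides and multiplying by $4/\beta$ gives
\[
P_K\le \Delta^2+\frac{4(\overline\Phi_1-\underline\Phi)}{\beta}+\frac{8C_e^\star}{\beta}E_K=D_0+D_1E_K.
\]

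For the second part, Lemma~\ref{lem:spider} applies because $\E\norm{p_k}^2\le\Delta^2<\infty$. It gives $E_K\le K\sigma_f^2/B+(QL_s^2/b)P_K$. Substituting the first bound and using $QL_s^2/b\le1/(2D_1)$ yields $E_K\le K\sigma_f^2/B+D_0/(2D_1)+E_K/2$. Since $E_K$ is finite (indeed $\norm{e_k}\le 2G$), we can absorb the last term and obtain $E_K\le 2K\sigma_f^2/B+D_0/D_1$. Dividing by $K$ gives the stated bound on $E_K/K$. Plugging this back into $P_K\le D_0+D_1E_K$ gives $P_K\le 2D_0+2D_1K\sigma_f^2/B$, and dividing by $K$ gives the bound on $P_K/K$.

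The main obstacle is the careful bookkeeping in the first step. Three points need attention. First, the pathwise bound $\Phi_1\le\overline\Phi_1$ in the stochastic setting must use the projected $v_0$, and the $f$-increase must be controlled by the true gradient bound along $[x_0,x_1]$. Second, the index shift in $e_{k-1}$ must be counted correctly so that the coefficient is exactly $2C_e^\star$. Third, every expectation must be finite before terms are rearranged or absorbed; the almost-sure bounds from Lemma~\ref{lem:invariant} and the projection guarantee this.
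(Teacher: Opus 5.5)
Your proposal is correct and follows the paper's proof essentially step for step. It uses the pathwise bound $\Phi_1\le\overline\Phi_1$, sums \eqref{eq:stoch-lyap} with the almost-sure lower bound $\Phi_K\ge\underline\Phi$ and the factor $2$ from the lagged error terms, adds $\E\norm{p_0}^2\le\Delta^2$, and closes the loop with Lemma~\ref{lem:spider} via $QL_s^2/b\le 1/(2D_1)$. You also check explicitly that $E_K<\infty$ (since $\norm{e_k}\le 2G$) before absorbing $E_K/2$, a detail the paper leaves implicit.
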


\begin{proof}
The bounds on \(x_1,c(x_1),p_0\) give
\(\Phi_1\le\overline\Phi_1\).  Summing \eqref{eq:stoch-lyap}, taking
expectations, and using \eqref{eq:lowerphi} gives
$$
    \frac{\beta}{4}\sum_{k=1}^{K-1}\E\norm{p_k}^2
    \le \overline\Phi_1-\underline\Phi
       +2C_e^\star\sum_{k=0}^{K-1}\E\norm{e_k}^2.
$$
Adding \(\E\norm{p_0}^2\le\Delta^2\) proves
\(P_K\le D_0+D_1E_K\).  If the projected SPIDER estimator is used, then,
with \(\alpha=QL_s^2/b\), \eqref{eq:spidersum} and the preceding bound give
\begin{align*}
 E_K&\le K\sigma_f^2/B+\alpha P_K,\qquad
 P_K\le D_0+D_1E_K,\\
 E_K&\le2K\sigma_f^2/B+D_0/D_1,\qquad
 P_K\le2D_1K\sigma_f^2/B+2D_0,
\end{align*}
where \(b\ge2D_1QL_s^2\) gives \(\alpha D_1\le1/2\).
Division by \(K\) proves the two asserted average bounds.
\end{proof}

The stochastic residual bound additionally contains the estimator error.
Define
$$
    C_R^{\mathrm{st}}
    :=\max\left\{
       2C_s^2+\frac{C_{\lambda,p}}{\rho^2},
       2+\frac{C_{\lambda,e}}{\rho^2}
    \right\}.
$$

\begin{lemma}
\label{lem:stochres}
\leanDeclTag{Lemma_3_5.lean}{1457}
\leanDeclTagEnd
Under Assumptions~\ref{ass:smooth} and~\ref{ass:safe}, let $N\ge2$ and suppose that, almost surely, the stochastic NR-LALM iterates remain in the regularity region up to iteration $N$. Then, almost surely, for every
\(1\le k\le N-1\),
\begin{equation}
    \mathcal{R}_{k+1}^2
    \le C_R^{\mathrm{st}}\Big(
       \norm{p_k}^2+\norm{p_{k-1}}^2+\norm{e_k}^2+\norm{e_{k-1}}^2\Big).
    \label{eq:stochres}
\end{equation}
\end{lemma}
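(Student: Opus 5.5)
The proof mirrors Lemma~\ref{lem:kktres}, with the exact gradient $\grad f(x_k)$ in the identity \eqref{eq:identity} replaced by $v_k=\grad f(x_k)+e_k$ as in \eqref{eq:stoch-identity}. Fix a sample path outside the null event, so that $[x_j,x_{j+1}]\subset\mathcal U$ for $0\le j\le N-1$. By the pathwise version of Lemma~\ref{lem:invariant} (valid since $\norm{v_k}\le G$), we have $\norm{p_j}\le\Delta$ and $\norm{\lambda_j}\le\Lambda$ along this path. Consequently \eqref{eq:linerrorbounds} gives $\norm{d_k}\le\kappa_{\mathrm{lin}}\Delta\norm{p_k}$.

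\textbf{Stationarity residual.} From \eqref{eq:stoch-identity} I write
\begin{align*}
\grad f(x_{k+1})+\grad c(x_{k+1})\lambda_{k+1}
={}&-e_k-\beta p_k+\rho\grad c(x_k)d_k
+\grad f(x_{k+1})-\grad f(x_k)\\
&+\big(\grad c(x_{k+1})-\grad c(x_k)\big)\lambda_{k+1}.
\end{align*}
Bounding the terms exactly as in \eqref{eq:statbound} gives a stationarity norm at most $C_s\norm{p_k}+\norm{e_k}$. Squaring with $(a+b)^2\le 2a^2+2b^2$ yields the bound $2C_s^2\norm{p_k}^2+2\norm{e_k}^2$.

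\textbf{Feasibility residual.} The multiplier update gives $\rho c(x_{k+1})=\lambda_{k+1}-\lambda_k$. The stochastic multiplier-increment estimate \eqref{eq:stoch-dualinc}, valid for $k\ge1$ on such a path, then gives
$$\norm{c(x_{k+1})}^2\le\rho^{-2}\bigl[C_{\lambda,p}(\norm{p_k}^2+\norm{p_{k-1}}^2)+C_{\lambda,e}(\norm{e_k}^2+\norm{e_{k-1}}^2)\bigr].$$

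\textbf{Combining.} Adding the two bounds, the coefficient of $\norm{p_k}^2$ is $2C_s^2+C_{\lambda,p}/\rho^2$, that of $\norm{e_k}^2$ is $2+C_{\lambda,e}/\rho^2$, and those of $\norm{p_{k-1}}^2$ and $\norm{e_{k-1}}^2$ are smaller. Hence every coefficient is at most $C_R^{\mathrm{st}}$, which proves \eqref{eq:stochres}.

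The only delicate point is checking that \eqref{eq:stoch-dualinc} and the invariant bounds hold pathwise for every $k\le N-1$ simultaneously, so that the conclusion is almost sure. This follows because the regularity-region hypothesis holds outside a single null event and all the estimates are deterministic on each such path.
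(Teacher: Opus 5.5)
Your proposal is correct and takes essentially the same approach as the paper. You substitute $v_k=\grad f(x_k)+e_k$ into the residual decomposition to get the stationarity bound $C_s\norm{p_k}+\norm{e_k}$, then square it and add the feasibility bound obtained from $\rho c(x_{k+1})=\lambda_{k+1}-\lambda_k$ and \eqref{eq:stoch-dualinc}; your coefficient bookkeeping reproduces the definition of $C_R^{\mathrm{st}}$ exactly.
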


\begin{proof}
Since \(v_k=\grad f(x_k)+e_k\), \eqref{eq:stoch-identity} gives
\begin{align*}
&\grad f(x_{k+1})+\grad c(x_{k+1})\lambda_{k+1}\\
&=-\beta p_k+\rho\grad c(x_k)d_k-e_k
  +\grad f(x_{k+1})-\grad f(x_k)\\
&\quad+\big(\grad c(x_{k+1})-\grad c(x_k)\big)\lambda_{k+1}.
\end{align*}
The estimates used in \eqref{eq:statbound} therefore give
$$
    \norm{\grad f(x_{k+1})
       +\grad c(x_{k+1})\lambda_{k+1}}
    \le C_s\norm{p_k}+\norm{e_k}.
$$
Square this bound and combine it with
\(c(x_{k+1})=(\lambda_{k+1}-\lambda_k)/\rho\) and
\eqref{eq:stoch-dualinc}; the definition of \(C_R^{\mathrm{st}}\) gives
\eqref{eq:stochres}.
\end{proof}

For an integer horizon $K\ge2$, choose
\begin{equation}
    B=K,
    \qquad Q=\lceil\sqrt K\rceil,
    \qquad
    b=\max\left\{1,
       \left\lceil 2D_1L_s^2Q\right\rceil\right\},
    \label{eq:spiderparams}
\end{equation}
and define the horizon- and accuracy-independent constants
$$
\Gamma_e:=2\sigma_f^2+\frac{D_0}{D_1},
\qquad
\Gamma_p:=2D_1\sigma_f^2+2D_0,
\qquad
C_{\mathrm{st}}
:=2C_R^{\mathrm{st}}(\Gamma_p+\Gamma_e).
$$
The choice of $b$ satisfies the condition in Lemma~\ref{lem:coupled}, which gives $E_K\le\Gamma_e$ and $P_K\le\Gamma_p$, uniformly in $K$.  Together with Lemma~\ref{lem:stochres}, these bounds yield the following direct
complexity result.  Its almost-sure containment condition is automatic when $\mathcal U=\R^n$; under local regularity, the stopped-process and safeguarded-restart analysis below provides the corresponding unconditional guarantee under the stated localization conditions.

\begin{theorem}
\label{thm:stochastic}
\leanDeclTag{Theorem_3_6.lean}{309}
\leanDeclTagExtra{Theorem_3_6.lean}{731}
\leanDeclTagEnd
Let Assumptions~\ref{ass:smooth}, \ref{ass:stoch}, and~\ref{ass:safe} hold
and fix \(K\ge2\).  Run
stochastic NR-LALM with the projected SPIDER estimator and the parameters in
\eqref{eq:spiderparams}, and suppose that, almost surely, its iterates remain in the regularity region up to iteration $K$. Let \(\widehat{k}\) be independent of the oracle
samples and uniform on \(\{1,\ldots,K-1\}\).  Then
\begin{equation}
    \E[\mathcal{R}_{\widehat{k}+1}^2]
    \le \frac{C_{\mathrm{st}}}{K-1},
    \label{eq:stochrate}
\end{equation}
where the expectation is over both the oracle samples and \(\widehat{k}\), and
\(C_{\mathrm{st}}\) is independent of \(K\) and \(\eps\).  Consequently, if
\(K-1\ge C_{\mathrm{st}}\eps^{-2}\), then
\((x_{\widehat{k}+1},\lambda_{\widehat{k}+1})\) is a stochastic
\(\eps\)-KKT pair.  Hence the iteration, deterministic constraint/Jacobian
evaluation, and exact linear-system solve complexities are
\(\cO(\eps^{-2})\), whereas the stochastic-gradient complexity is
\(\cO(\eps^{-3})\).
\end{theorem}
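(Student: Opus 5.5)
The plan is to combine the pathwise residual bound of Lemma~\ref{lem:stochres} with the uniform-in-$K$ bounds on $P_K$ and $E_K$ from Lemma~\ref{lem:coupled}, and then count oracle calls under the parameter choice \eqref{eq:spiderparams}.

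\emph{Verifying the hypotheses.} Almost-sure containment up to iteration $K$ together with $\norm{v_k}\le G$ gives $\norm{p_k}\le\Delta$ and $\norm{\lambda_k}\le\Lambda$ almost surely (via the pathwise version of Lemma~\ref{lem:invariant}). Hence $\E\norm{p_k}^2<\infty$, so Lemma~\ref{lem:spider} applies. The choice $b\ge\lceil 2D_1L_s^2Q\rceil$ meets the condition of Lemma~\ref{lem:coupled}. With $B=K$, that lemma gives
\[
E_K\le 2\sigma_f^2+\frac{D_0}{D_1}=\Gamma_e,
\qquad
P_K\le 2D_1\sigma_f^2+2D_0=\Gamma_p .
\]

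\emph{Residual bound.} I would take expectations in \eqref{eq:stochres} and sum over $1\le k\le K-1$. Each of $\E\norm{p_j}^2$ and $\E\norm{e_j}^2$ with $0\le j\le K-1$ then appears at most twice, once from index $k$ and once from $k-1$. This yields
\[
\sum_{k=1}^{K-1}\E\mathcal R_{k+1}^2\le 2C_R^{\mathrm{st}}(P_K+E_K)\le C_{\mathrm{st}} .
\]
Because $\widehat k$ is independent of the samples and uniform on $\{1,\ldots,K-1\}$, conditioning on $\widehat k$ gives $\E[\mathcal R_{\widehat k+1}^2]=(K-1)^{-1}\sum_{k=1}^{K-1}\E\mathcal R_{k+1}^2$, which is \eqref{eq:stochrate}. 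If $K-1\ge C_{\mathrm{st}}\eps^{-2}$, then $\E\mathcal R^2\le\eps^2$. Since $\mathcal R^2$ is the sum of the two squared residuals, the stochastic $\eps$-KKT conditions of Definition~\ref{def:stoch-eps-kkt} follow. Measurability of the randomly indexed pair is immediate, since $\widehat k$ takes finitely many values.

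\emph{Complexity count.} Take $K=\lceil C_{\mathrm{st}}\eps^{-2}\rceil+1=\cO(\eps^{-2})$. Each iteration uses one Jacobian evaluation, one constraint evaluation and one linear solve, so these counts are $\cO(\eps^{-2})$. For the stochastic gradients:
\begin{itemize}
\item there are $\lceil K/Q\rceil\le\sqrt K+1$ checkpoints, each costing $B=K$ samples, for a total of $\cO(K^{3/2})$;
\item noncheckpoint iterations cost $2b=\cO(Q)=\cO(\sqrt K)$ gradients each, for a total of $\cO(K^{3/2})$.
\end{itemize}
Hence the stochastic-gradient complexity is $\cO(K^{3/2})=\cO(\eps^{-3})$.

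\emph{Main point to check.} The only delicate step is confirming that the constants do not depend on $K$ or $\eps$. This reduces to noting that $D_0$ and $D_1$ are defined from $\overline\Phi_1$, $\underline\Phi$, $\beta$ and $\rho$ alone, and that $B=K$ cancels the factor $K$ in $K\sigma_f^2/B$.
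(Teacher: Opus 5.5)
Your proposal is correct and follows the paper's proof essentially step for step. Lemma~\ref{lem:coupled} with $B=K$ gives $P_K\le\Gamma_p$ and $E_K\le\Gamma_e$; summing Lemma~\ref{lem:stochres} (each $\E\norm{p_j}^2$ and $\E\norm{e_j}^2$ counted at most twice) and averaging over the independent uniform $\widehat k$ gives \eqref{eq:stochrate}; and the count $\lceil K/Q\rceil B+2Kb=\cO(K^{3/2})$ gives the oracle complexities, with only the small imprecision that $D_0$ also involves $\Delta^2$ and $D_1$ depends on $\sigma$ through $C_e^\star$, both equally independent of $K$ and $\eps$.
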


\begin{proof}
By Lemma~\ref{lem:coupled} and \eqref{eq:spiderparams},
\(P_K\le\Gamma_p\) and \(E_K\le\Gamma_e\).
By Lemma~\ref{lem:stochres} and the independence and uniformity of \(\widehat{k}\),
\begin{equation*}
    \E[\mathcal R_{\widehat{k}+1}^2]
    =\frac1{K-1}\sum_{k=1}^{K-1}\E[\mathcal R_{k+1}^2]
    \le\frac{2C_R^{\mathrm{st}}(P_K+E_K)}{K-1}
    \le\frac{C_{\mathrm{st}}}{K-1}.
\end{equation*}
If \(K-1\ge C_{\mathrm{st}}\eps^{-2}\), then the preceding bound gives \(\E[\mathcal R_{\widehat{k}+1}^2]\le\eps^2\), so
Definition~\ref{def:stoch-eps-kkt} gives the asserted stochastic \(\eps\)-KKT pair. There are at most \(\lceil K/Q\rceil\) checkpoint
batches, each of size \(B\).  At each noncheckpoint iteration, the same batch of \(b\) samples is evaluated at both \(x_k\) and \(x_{k-1}\), resulting in \(2b\) stochastic gradient evaluations.  Therefore
$$
    N_{\mathrm{grad}}
    \le \left\lceil\frac KQ\right\rceil B+2Kb
    =\cO(K^{3/2})=\cO(\eps^{-3}).
$$
The deterministic evaluation and solve counts follow from the per-iteration cost stated after Algorithm~\ref{alg:lalm}.
\end{proof}

The direct theorem still requires every sampled trajectory to remain in \(\mathcal U\).  Under local assumptions, an expected Lyapunov bound does not provide this pathwise guarantee.  We therefore stop a run when it first leaves a prescribed set whose neighborhood lies in \(\mathcal U\).  Theorem \ref{thm:stochlocalization} bounds the exit probability over a finite horizon, and restarting failed runs yields the unconditional expected bound in Corollary~\ref{cor:restart}.  This construction uses an exact membership test and is primarily a theoretical localization device.

Fix \(K\ge2\) and \(\delta\in(0,1)\), and use
\eqref{eq:spiderparams}.  Define
$$
    H_{\mathrm{st}}(\delta)
    :=H_{\mathrm{det}}+\frac{2C_e^\star\Gamma_e}{\delta},
    \qquad
    \mathcal S_{\mathrm{st}}(\delta)
    :=\left\{x:f(x)\le H_{\mathrm{st}}(\delta),
       \norm{c(x)}\le\frac{2\Lambda}{\rho}\right\}.
$$
Suppose there is a known Borel set $\mathcal X_\delta$ with an exact
membership test such that
\begin{equation}
    x_0\in\mathcal X_\delta,
    \qquad
    \mathcal S_{\mathrm{st}}(\delta)
    \subseteq\mathcal X_\delta,
    \qquad
    \mathcal N_{\Delta}
       \big(\mathcal X_\delta\big)
    \subset\mathcal U.
    \label{eq:stochbuffer}
\end{equation}
The buffer gives \(x_0\in\mathcal U\), hence \(f(x_0)\ge\underline f\),
\(D_0,\Gamma_e>0\), and \(H_{\mathrm{st}}(\delta)\ge H_{\mathrm{det}}\).
Together with \(\norm{p_0}\le\Delta\), it places
\([x_0,x_1]\subset\mathcal N_\Delta(\mathcal X_\delta)\subset\mathcal U\)
and initializes the stopped recursion.
Consider the stochastic NR-LALM with projected SPIDER, constructed recursively
while its current iterate lies in \(\mathcal X_\delta\), and let
\(\tau_{\rm ex}:=\inf\{k\ge1:x_k\notin\mathcal X_\delta\}\), with
\(\tau_{\rm ex}=\infty\) if no exit occurs.

\begin{theorem}
\label{thm:stochlocalization}
\leanDeclTag{Theorem_3_7.lean}{5068}
\leanDeclTagExtra{Theorem_3_7.lean}{5558}
\leanDeclTagEnd
Under the preceding setup, including \(x_0\in\mathcal X_\delta\), suppose that
Assumptions~\ref{ass:smooth}, \ref{ass:stoch}, and~\ref{ass:safe} hold.  Then
\begin{equation}
    \mathbb P(\tau_{\rm ex}\le K)\le\delta.
    \label{eq:exitprob}
\end{equation}
If \(\widehat{k}\) is independent of the oracle samples and uniform on
$\{1,\ldots,K-1\}$, then
\begin{equation}
    \E\left[\mathcal R_{\widehat{k}+1}^2\mid\tau_{\rm ex}>K\right]
    \le\frac{C_{\mathrm{st}}}
             {(1-\delta)(K-1)}.
    \label{eq:conditionalstochrate}
\end{equation}
\end{theorem}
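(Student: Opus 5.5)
The plan is a stopped-process argument.

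\textbf{Stopped process.} Define a stopped process that freezes the run once it leaves $\mathcal X_\delta$. Set $\tau:=\tau_{\rm ex}\wedge K$; this is a stopping time for $\{\mathcal G_k\}$. For $k<\tau$ we have $x_k\in\mathcal X_\delta$ with $\mathcal X_\delta\subseteq\mathcal U$. The pathwise argument of Lemma~\ref{lem:invariant}, with $\nabla f(x_k)$ replaced by $v_k$ and $\norm{v_k}\le G$, then gives $\norm{p_k}\le\Delta$ and $\norm{\lambda_{k+1}}\le\Lambda$. The buffer in \eqref{eq:stochbuffer} yields $[x_k,x_{k+1}]\subset\mathcal N_\Delta(\mathcal X_\delta)\subset\mathcal U$. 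This holds by induction on $k$ along each sample path, up to and including the exit step $k=\tau-1$. Hence \eqref{eq:stoch-lyap}, \eqref{eq:stoch-dualinc}, the SPIDER recursion of Lemma~\ref{lem:spider}, and \eqref{eq:lowerphi} all hold for indices $k<\tau$.

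\textbf{Truncated sums.} Replace $P_K,E_K$ by the sums $\widetilde P_K:=\sum_{k<K}\E[\norm{p_k}^2\mathbf 1_{k<\tau}]$ and $\widetilde E_K:=\sum_{k<K}\E[\norm{e_k}^2\mathbf 1_{k<\tau}]$. The indicator $\mathbf 1_{k<\tau}$ is $\mathcal G_k$-measurable, so the conditional-variance computation in Lemma~\ref{lem:spider} survives multiplication by it. Since $\mathbf 1_{k<\tau}\le\mathbf 1_{k-1<\tau}$, the recursion for $\E[\norm{\widehat e_k}^2\mathbf 1_{k<\tau}]$ telescopes as before and gives $\widetilde E_K\le K\sigma_f^2/B+(QL_s^2/b)\widetilde P_K$.

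Next, sum \eqref{eq:stoch-lyap} up to the stopped index $\tau-1$ to obtain
\[
\Phi_{\tau}+\tfrac\beta4\sum_{1\le k<\tau}\norm{p_k}^2\le\overline\Phi_1+C_e^\star\sum_{k<\tau}(\norm{e_k}^2+\norm{e_{k-1}}^2),
\]
using $\Phi_1\le\overline\Phi_1$ and $\Phi_\tau\ge\underline\Phi$. Taking expectations and repeating the proof of Lemma~\ref{lem:coupled} gives $\widetilde E_K\le\Gamma_e$ and $\widetilde P_K\le\Gamma_p$ under \eqref{eq:spiderparams}.

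\textbf{Exit probability.} This is the main obstacle. On $\{\tau_{\rm ex}\le K\}$ the exit iterate $x_\tau$ satisfies $\norm{\lambda_\tau}\le\Lambda$, hence $\norm{c(x_\tau)}\le2\Lambda/\rho$. Since $x_\tau\notin\mathcal S_{\rm st}(\delta)$, it must be that $f(x_\tau)>H_{\rm st}(\delta)$. Completing the square gives
\[
f(x_\tau)\le\Phi_\tau+\Lambda^2/(2\rho)\le H_{\rm det}+2C_e^\star Z,\qquad Z:=\sum_{k<\tau}\norm{e_k}^2,
\]
where the stopped Lyapunov sum is dropped (it is nonnegative) and each $\norm{e_j}^2$ is counted at most twice. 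Therefore $\{\tau_{\rm ex}\le K\}\subseteq\{2C_e^\star Z>2C_e^\star\Gamma_e/\delta\}$. Since $\E Z=\widetilde E_K\le\Gamma_e$, Markov's inequality gives \eqref{eq:exitprob}. The delicate point is to check carefully that the bounds at the exit index itself (step $\tau-1$ and $\lambda_\tau$) are legitimately available, and that the measurability and integrability of the stopped quantities hold.

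\textbf{Conditional rate.} On $\{\tau_{\rm ex}>K\}$, Lemma~\ref{lem:stochres} applies for every $k\le K-1$. Therefore
\[
\E[\mathcal R_{\widehat k+1}^2\mathbf 1_{\tau_{\rm ex}>K}]\le\frac{2C_R^{\rm st}(\widetilde P_K+\widetilde E_K)}{K-1}\le\frac{C_{\rm st}}{K-1}.
\]
Dividing by $\mathbb P(\tau_{\rm ex}>K)\ge1-\delta$ yields \eqref{eq:conditionalstochrate}.
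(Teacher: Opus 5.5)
Your proposal is correct and follows essentially the same route as the paper. Both arguments use a stopped process with $\mathcal G_k$-measurable, nonincreasing indicators $I_k=\mathbf 1_{\{k<\tau_{\rm ex}\}}$, and indicator-weighted versions of Lemmas~\ref{lem:spider} and~\ref{lem:coupled} that yield $E_K^{\rm stop}\le\Gamma_e$ and $P_K^{\rm stop}\le\Gamma_p$. Both then bound the exit iterate by $f(x_{\tau_{\rm ex}})\le H_{\mathrm{det}}+2C_e^\star\sum_k I_k\norm{e_k}^2$, apply Markov's inequality, and divide by $\mathbb P(\tau_{\rm ex}>K)\ge1-\delta$. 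The paper also states the base step $x_1\in\mathcal S_{\mathrm{st}}(\delta)$ explicitly, but your exit-iterate bound already covers $\tau_{\rm ex}=1$, since $\Phi_1\le\overline\Phi_1$ gives $f(x_1)\le H_{\mathrm{det}}$.
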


\begin{proof}
After an exit, no further oracle calls are made.  To write the stopped sums on a fixed horizon, set \(x_k=x_{\tau_{\rm ex}}\),
\(\lambda_k=\lambda_{\tau_{\rm ex}}\), and \(p_k=e_k=0\) for
\(k\ge\tau_{\rm ex}\); the estimators \(h_k\) and \(v_k\) are not evaluated. Also set
\(I_k=\mathbf1_{\{k<\tau_{\rm ex}\}}\) and
$$
 P_K^{\rm stop}:=\sum_{k=0}^{K-1}\E[I_k\norm{p_k}^2],
 \qquad
 E_K^{\rm stop}:=\sum_{k=0}^{K-1}\E[I_k\norm{e_k}^2].
$$
Before exit, \(x_k\in\mathcal X_\delta\); the buffer
\eqref{eq:stochbuffer} ensures $[x_k,x_{k+1}]\subset\mathcal U$ whenever $k<\tau_{\rm ex}$.  Moreover, \(I_k\) is \(\mathcal G_k\)-measurable and \(I_k\le I_{k-1}\) for $k\ge1$.  Repeating the epochwise proof of Lemma~\ref{lem:spider} with these indicators gives the first
inequality below.  The active indices form an initial block, so the Lyapunov sum telescopes to
\(\Phi_1-\Phi_{\min\{K,\tau_{\rm ex}\}}\); repeating the proof of
Lemma~\ref{lem:coupled} gives the second:
$$
 E_K^{\rm stop}\le\frac{K\sigma_f^2}{B}
 +\frac{QL_s^2}{b}P_K^{\rm stop},
 \qquad
 P_K^{\rm stop}\le D_0+D_1E_K^{\rm stop}.
$$
Thus \(B=K\) and \eqref{eq:spiderparams} imply
\begin{equation}
 E_K^{\rm stop}\le\Gamma_e,\qquad
 P_K^{\rm stop}\le\Gamma_p.
 \label{eq:stoppedbounds}
\end{equation}

The base-step estimates give \(f(x_1)\le H_{\mathrm{det}}\) and
\(\rho\norm{c(x_1)}\le2\Lambda\), so
\(x_1\in\mathcal S_{\mathrm{st}}(\delta)\).  If \(\tau_{\rm ex}\le K\),
the stopped Lyapunov inequality and completion of the square give
$$
 f(x_{\tau_{\rm ex}})
 \le H_{\mathrm{det}}+
 2C_e^\star\sum_{k=0}^{K-1}I_k\norm{e_k}^2.
$$
Lemma~\ref{lem:invariant}, applied before exit, also gives
\(\rho\norm{c(x_{\tau_{\rm ex}})}\le2\Lambda\).  Since
\(\mathcal S_{\mathrm{st}}(\delta)\subseteq\mathcal X_\delta\), exit
therefore implies
\(f(x_{\tau_{\rm ex}})>H_{\mathrm{st}}(\delta)\).  Hence
\(\sum_kI_k\norm{e_k}^2>\Gamma_e/\delta\), and Markov's inequality
with \eqref{eq:stoppedbounds} proves \eqref{eq:exitprob}.

On \(\{\tau_{\rm ex}>K\}\), Lemma~\ref{lem:stochres} and
\eqref{eq:stoppedbounds} give
$$
 \E\!\left[
 \mathbf1_{\{\tau_{\rm ex}>K\}}\mathcal R_{\widehat{k}+1}^2\right]
 \le\frac{2C_R^{\mathrm{st}}
 (P_K^{\rm stop}+E_K^{\rm stop})}{K-1}
 \le\frac{C_{\mathrm{st}}}{K-1}.
$$
Dividing by
\(\mathbb P(\tau_{\rm ex}>K)\ge1-\delta\) proves
\eqref{eq:conditionalstochrate}.
\end{proof}

For fixed \(\delta\), the level \(H_{\mathrm{st}}(\delta)\) is independent of \(K\) and \(\eps\).  The set \(\mathcal X_\delta\) may, for example, be a known box or ball; under global regularity one may take \(\mathcal X_\delta=\mathbb R^n\).  To remove the conditioning in Theorem~\ref{thm:stochlocalization}, restart from $(x_0,\lambda_0)$ after each exit, using fresh independent oracle samples.  From the first attempt that completes $K$ steps, draw $\widehat{k}$ independently and uniformly from $\{1,\ldots,K-1\}$ and return the primal--dual pair $(x_{\widehat{k}+1},\lambda_{\widehat{k}+1})$.

\begin{corollary}
\label{cor:restart}
\leanDeclTag{Corollary_3_8.lean}{822}
\leanDeclTagExtra{Corollary_3_8.lean}{2059}
\leanDeclTagExtra{Corollary_3_8.lean}{2316}
\leanDeclTagEnd
Under the conditions of Theorem~\ref{thm:stochlocalization}, let \(T\) be the
number of attempts and
\(N_{\mathrm{grad}}\) the total stochastic-gradient count.  The procedure
terminates almost surely and
\begin{equation}
    \E[T]\le\frac{1}{1-\delta},\quad
    \E[\mathcal R_{\widehat{k}+1}^2]
    \le\frac{C_{\mathrm{st}}}{(1-\delta)(K-1)},\quad
    \E[N_{\mathrm{grad}}]
    \le\frac{\lceil K/Q\rceil B+2Kb}{1-\delta}.
    \label{eq:restartbounds}
\end{equation}
The expected numbers of deterministic constraint/Jacobian evaluations,
exact linear-system solves, and localization-set membership tests are each
$\cO(K/(1-\delta))$.
\end{corollary}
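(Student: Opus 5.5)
The argument treats the restart procedure as a sequence of independent, identically distributed Bernoulli trials and then applies Wald-type identities. Each attempt starts from the same deterministic pair $(x_0,\lambda_0)$ with fresh oracle samples, so the success events $\{\tau_{\rm ex}^{(t)}>K\}$, $t=1,2,\ldots$, are i.i.d. By \eqref{eq:exitprob}, each has probability $q:=\mathbb P(\tau_{\rm ex}>K)\ge1-\delta>0$. Hence $T$ is geometric with parameter $q$. It follows that $\mathbb P(T=\infty)=\lim_t(1-q)^t=0$, so the procedure terminates almost surely, and $\E[T]=1/q\le 1/(1-\delta)$.

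For the residual bound, the plan is to show that the returned pair has the law of a single run conditioned on $\{\tau_{\rm ex}>K\}$. To do this, decompose on $\{T=t\}$. The event $\{T=t\}$ is the intersection of failures in attempts $1,\ldots,t-1$ and success in attempt $t$. The returned quantity $\mathcal R_{\widehat k+1}^2$ is a function only of attempt $t$'s samples and of $\widehat k$, which is drawn independently. By independence across attempts,
$$\E\bigl[\mathcal R_{\widehat k+1}^2\mathbf 1_{\{T=t\}}\bigr]=(1-q)^{t-1}\,\E\bigl[\mathcal R_{\widehat k+1}^2\mathbf 1_{\{\tau_{\rm ex}>K\}}\bigr].$$
Summing over $t$ gives $\E[\mathcal R_{\widehat k+1}^2]=\E[\mathcal R_{\widehat k+1}^2\mid\tau_{\rm ex}>K]$, and \eqref{eq:conditionalstochrate} then yields the second bound in \eqref{eq:restartbounds}. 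A cleaner alternative is to use the unconditioned intermediate estimate from the proof of Theorem~\ref{thm:stochlocalization}, namely $\E[\mathbf 1_{\{\tau_{\rm ex}>K\}}\mathcal R^2]\le C_{\mathrm{st}}/(K-1)$, and divide by $q$. This requires care with the measurability of the attempt-$t$ quantities as functions of that attempt's sample sequence only. One must also note that $\widehat k$ is drawn only for the successful attempt, which is harmless because it is independent of all samples.

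For the costs, the first step is to bound each attempt, successful or failed, by at most $K$ iterations. Each iteration uses at most the per-iteration gradient budget, so the attempt uses at most $\lceil K/Q\rceil B+2Kb$ stochastic gradients, since stopped runs make no further oracle calls. The same reasoning gives at most $\cO(K)$ constraint/Jacobian evaluations, linear solves, and membership tests per attempt. Then $N_{\mathrm{grad}}\le T\cdot(\lceil K/Q\rceil B+2Kb)$ pathwise. Taking expectations with $\E[T]\le1/(1-\delta)$ gives the third bound in \eqref{eq:restartbounds} without needing Wald's identity, because the per-attempt bound is deterministic. The deterministic counts follow in the same way.

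I expect the main obstacle to be the conditioning step for the returned residual: making precise that the selection by $T$ does not bias the distribution of the successful run. This rests on the i.i.d. structure of attempts and the independence of $\widehat k$. The remaining steps are routine geometric-distribution calculations.
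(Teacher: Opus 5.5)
Your proposal is correct and follows essentially the same route as the paper. Fresh independent restarts make \(T\) geometric with success probability \(\mathbb P(\tau_{\rm ex}>K)\ge1-\delta\), the accepted attempt inherits the conditional law in \eqref{eq:conditionalstochrate}, and the pathwise bound \(N_{\mathrm{grad}}\le T(\lceil K/Q\rceil B+2Kb)\) gives the cost estimates; your decomposition over \(\{T=t\}\) just spells out the conditioning step that the paper states in one line.
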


\begin{proof}
Let \(\pi_K:=\mathbb P(\tau_{\rm ex}>K)\).  By
Theorem~\ref{thm:stochlocalization}, \(\pi_K\ge1-\delta\).  Independent fresh
attempts make \(T\) geometric, so
\(\E[T]=1/\pi_K\le1/(1-\delta)\); the accepted attempt has the conditional
law in \eqref{eq:conditionalstochrate}.  Finally, pathwise,
$$
 N_{\mathrm{grad}}\le
 T(\lceil K/Q\rceil B+2Kb),
$$
and the other work counts are bounded by constant multiples of \(KT\).
Taking expectations proves the claims.
\end{proof}

For any fixed $\delta\in(0,1)$, choose
$K=\left\lceil C_{\mathrm{st}}/[(1-\delta)\eps^2]\right\rceil+1$.
Then Corollary~\ref{cor:restart} gives a stochastic $\eps$-KKT pair with $\cO(\eps^{-3})$ expected stochastic gradient evaluations and $\cO(\eps^{-2})$ expected deterministic evaluations and exact linear-system solves. Thus, under the additional exact-membership assumption, restarting removes the pathwise containment condition without changing the complexity exponents.

\section{A second-order correction}
\label{sec:correction}

We append an optional minimum-norm second-order correction to the NR-LALM primal step.  Given $p_k$, let $z_k=x_k+p_k$ and define
\begin{equation}
\begin{aligned}
r_k&=c(z_k)-c(x_k)-\grad c(x_k)^\top p_k,\\
q_k&=-\grad c(z_k)
\big(\grad c(z_k)^\top\grad c(z_k)\big)^{-1}r_k,\\
x_{k+1}&=z_k+q_k,
\end{aligned}
\label{eq:correctionstep}
\end{equation}
The multiplier update remains $\lambda_{k+1}=\lambda_k+\rho c(x_{k+1})$.  We call the resulting variant NR-LALM+SOC.  Uniform LICQ makes $q_k$ the minimum-norm solution of $\grad c(z_k)^\top q_k=-r_k$.  The local correction requires one additional evaluation of $c$ and $\grad c$ and one $m\times m$ linear solve; it is not a
globalization procedure. For NR-LALM+SOC, we say that the iterates remain in the regularity region up to iteration $N$ if $[x_k,z_k]\cup[z_k,x_{k+1}]\subset\mathcal U$ for $0\le k\le N-1$.

Define $d_k^{\mathrm{cor}} :=c(x_{k+1})-c(x_k)-\grad c(x_k)^\top p_k$.  Taylor's theorem gives $\norm{r_k}\le\kappa_{\mathrm{lin}}\norm{p_k}^2$, and uniform LICQ gives $\norm{q_k}\le\sigma^{-1}\norm{r_k}$.  Since
$\grad c(z_k)^\top q_k=-r_k$, we have $d_k^{\mathrm{cor}}=c(z_k+q_k)-c(z_k)-\grad c(z_k)^\top q_k$; hence
$d_k^{\mathrm{cor}}$ is the Taylor remainder along the correction step.  Set $\kappa_q=L_c/(2\sigma)$, $\kappa_d=L_c^3/(8\sigma^2)$, and $\chi^{\mathrm{cor}}=1+\kappa_q\Delta$. Whenever $[x_k,z_k]\cup[z_k,x_{k+1}]\subset\mathcal U$ and $\norm{p_k}\le\Delta$, these relations yield
\begin{equation}
\begin{aligned}
 \norm{r_k}&\le\kappa_{\mathrm{lin}}\norm{p_k}^2,&
 \norm{q_k}&\le\kappa_q\norm{p_k}^2,\\
 \norm{d_k^{\mathrm{cor}}}
 &\le\kappa_d\norm{p_k}^4
 \le\kappa_d\Delta^2\norm{p_k}^2,&
 \norm{x_{k+1}-x_k}
 &\le\chi^{\mathrm{cor}}\norm{p_k}.
\end{aligned}
\label{eq:corrected-error-bounds}
\end{equation}
The primal optimality condition and nonlinear-residual update consequently give
$\grad f(x_k)+\grad c(x_k)\lambda_{k+1}+\beta p_k
=\rho\grad c(x_k)d_k^{\mathrm{cor}}$.  For the stochastic method, the same
identity holds with $\grad f(x_k)$ replaced by $v_k$.  Table~\ref{tab:corrected-constants}
lists the corresponding constant replacements.

\begin{table}[!t]
\caption{Constants used in the analysis of NR-LALM+SOC.}
\label{tab:corrected-constants}
\centering
\scriptsize
\renewcommand{\arraystretch}{0.94}
\begin{tabularx}{\textwidth}{@{}>{\raggedright\arraybackslash}p{0.25\textwidth}>{\raggedright\arraybackslash}X@{}}
\toprule
NR-LALM & NR-LALM+SOC\\
\midrule
\(\kappa_{\mathrm{lin}}\), \(1\)
& \(\kappa_d\Delta^2\),
  \(\chi^{\mathrm{cor}}=1+\kappa_q\Delta\)\\
\(C_{\mathrm{mod}}\)
& \(\displaystyle C_{\mathrm{mod}}^{\mathrm{cor}}
 =G\kappa_q+\frac{L_f}{2}(\chi^{\mathrm{cor}})^2
 +\kappa_d\Delta^2(3\Lambda+\rho M\Delta)
 +\frac{\rho}{2}\kappa_d^2\Delta^6\)\\
\(A_p\)
& \(\displaystyle A_p^{\mathrm{cor}}
 =\beta+\rho M\kappa_d\Delta^3\)\\
\(B_p\)
& \(\displaystyle B_p^{\mathrm{cor}}
 =A_p^{\mathrm{cor}}+(L_f+L_c\Lambda)\chi^{\mathrm{cor}}\)\\
\(C_{\lambda,p}\)
& \(\displaystyle C_{\lambda,p}^{\mathrm{cor}}
 =\frac{4}{\sigma^2}
 \max\{(A_p^{\mathrm{cor}})^2,(B_p^{\mathrm{cor}})^2\}\)\\
\(C_s\)
& \(\displaystyle C_s^{\mathrm{cor}}
 =\beta+\rho M\kappa_d\Delta^3
 +(L_f+L_c\Lambda)\chi^{\mathrm{cor}}\)\\
\(\overline\Phi_1\)
& \(\displaystyle \overline\Phi_1^{\mathrm{cor}}
 =f(x_0)+G\chi^{\mathrm{cor}}\Delta+\frac{4\Lambda^2}{\rho}
 +\frac{C_{\lambda,p}^{\mathrm{cor}}}{\rho}\Delta^2\)\\
\(H_{\mathrm{det}}\)
& \(\displaystyle H_{\mathrm{det}}^{\mathrm{cor}}
 =\overline\Phi_1^{\mathrm{cor}}+\frac{\Lambda^2}{2\rho}\)\\
\(L_s^2\) in SPIDER displacement bounds
& \(L_s^2(\chi^{\mathrm{cor}})^2\)\\
\bottomrule
\end{tabularx}
\end{table}

For the deterministic analysis of NR-LALM+SOC, use the four inequalities in \eqref{eq:safe} with
$\kappa_{\mathrm{lin}}$, $C_{\mathrm{mod}}$, and $C_{\lambda,p}$
replaced by $\kappa_d\Delta^2$, $C_{\mathrm{mod}}^{\mathrm{cor}}$, and $C_{\lambda,p}^{\mathrm{cor}}$, respectively.  The initialization conditions are unchanged.  The deterministic localization set and buffer become
\begin{equation}
 \mathcal S_{\mathrm{det}}^{\mathrm{cor}}
 :=\left\{x:f(x)\le H_{\mathrm{det}}^{\mathrm{cor}},
          \norm{c(x)}\le2\Lambda/\rho\right\},\qquad
 \mathcal N_{\chi^{\mathrm{cor}}\Delta}
 \bigl(\mathcal S_{\mathrm{det}}^{\mathrm{cor}}\bigr)\subset\mathcal U.
\label{eq:corrected-det-buffer}
\end{equation}
Indeed, if $x_k\in\mathcal S_{\mathrm{det}}^{\mathrm{cor}}$ and
$\norm{p_k}\le\Delta$, the displacement bounds in
\eqref{eq:corrected-error-bounds} keep both
$[x_k,z_k]$ and $[z_k,x_{k+1}]$ in this neighborhood.

For the stochastic analysis,
$\norm{x_{k+1}-x_k}\le\chi^{\mathrm{cor}}\norm{p_k}$ changes the SPIDER
displacement factor from $L_s^2$ to
$L_s^2(\chi^{\mathrm{cor}})^2$.  The quantities
$C_e^\star$, $C_{\lambda,e}$, and $D_1$ remain unchanged, while
$D_0^{\mathrm{cor}}$, $\Gamma_e^{\mathrm{cor}}$,
$\Gamma_p^{\mathrm{cor}}$, $C_R^{\mathrm{st,cor}}$, and
$C_{\mathrm{st}}^{\mathrm{cor}}$ are defined by the corresponding formulas
in Section~\ref{sec:stochastic} with the corrected constants.  Set
$H_{\mathrm{st}}^{\mathrm{cor}}(\delta)
:=H_{\mathrm{det}}^{\mathrm{cor}}
+2C_e^\star\Gamma_e^{\mathrm{cor}}/\delta$ and
$\mathcal S_{\mathrm{st}}^{\mathrm{cor}}(\delta)
:=\{x:f(x)\le H_{\mathrm{st}}^{\mathrm{cor}}(\delta),
\ \norm{c(x)}\le2\Lambda/\rho\}$.  The stopped-process argument uses
\begin{equation}
 x_0\in\mathcal X_\delta,
 \qquad
 \mathcal S_{\mathrm{st}}^{\mathrm{cor}}(\delta)
 \subseteq\mathcal X_\delta,
 \qquad
 \mathcal N_{\chi^{\mathrm{cor}}\Delta}(\mathcal X_\delta)
 \subset\mathcal U.
\label{eq:corrected-stoch-buffer}
\end{equation}
together with $B=K$, $Q=\lceil\sqrt K\rceil$, and
$b=\max\{1,\lceil2D_1L_s^2(\chi^{\mathrm{cor}})^2Q\rceil\}$.

The correction reduces the linearization-error coefficient from
$\kappa_{\mathrm{lin}}$ to $\kappa_d\Delta^2$, but increases the displacement factor from $1$ to $\chi^{\mathrm{cor}}$.  The next result compares the combined effect of these changes on the deterministic sufficient conditions.

\begin{proposition}
\label{prop:parameter-region}
\leanDeclTag{Proposition_4_1.lean}{322}
\leanDeclTagExtra{Proposition_4_1.lean}{493}
\leanDeclTagEnd
Suppose Assumption~\ref{ass:smooth} holds with \(m\ge1\), and use the same regularity constants for both methods.  Every positive tuple \((\Delta,\beta,\rho,\Lambda)\) satisfying \eqref{eq:safe} for NR-LALM also satisfies the corrected inequalities for NR-LALM+SOC.  Consequently, under global regularity, any parameters and initial pair certified for NR-LALM are also certified for NR-LALM+SOC.  This inclusion can be strict: there exist a globally regular scalar problem and fixed \((\beta,\rho,x_0,\lambda_0)\) for which the NR-LALM+SOC conditions hold, whereas the NR-LALM conditions admit no positive auxiliary pair
\((\widehat\Delta,\widehat\Lambda)\).
\end{proposition}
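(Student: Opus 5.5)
We check the four corrected inequalities one at a time against their NR-LALM counterparts in \eqref{eq:safe}, using \eqref{eq:safe} itself to extract a few auxiliary smallness facts. Then we build a scalar example for strictness.

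\emph{Auxiliary consequences of \eqref{eq:safe}.} First, $C_{\lambda,p}\ge 4A_p^2/\sigma^2\ge 4\beta^2/\sigma^2$, so the fourth inequality gives $\rho\ge 32\beta/\sigma^2$. Second, the third inequality gives $(\rho/2)\kappa_{\mathrm{lin}}^2\Delta^2\le C_{\mathrm{mod}}\le 3\beta/8$. Combining the two yields $L_c^2\Delta^2\le 3\beta/\rho\le 3\sigma^2/32$, so $\kappa_q\Delta=L_c\Delta/(2\sigma)$ is an absolute small number. Moreover,
$$\kappa_d\Delta^2=\kappa_{\mathrm{lin}}\,\theta,\qquad \theta:=\frac{L_c^2\Delta^2}{4\sigma^2}\le\frac{3}{128}.$$
Third, the first inequality gives $\Lambda\ge G/\sigma$ and $\Lambda\ge\beta\Delta/\sigma$. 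Fourth, $L_f\le 2C_{\mathrm{mod}}\le 3\beta/4$.

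\emph{The four inequalities.} The corrected first inequality follows directly from the original one, since $\kappa_d\Delta^2\le\kappa_{\mathrm{lin}}$. The second inequality involves only $G,M,\Lambda,\beta,\rho,\sigma$ and is identical for both methods.

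For the third, I will show $C_{\mathrm{mod}}^{\mathrm{cor}}\le C_{\mathrm{mod}}$. The two error terms shrink by the factor $\theta$ or $\theta^2$, which saves at least $(1-\theta)\,3\kappa_{\mathrm{lin}}\Lambda$. This saving must cover two new terms.
\begin{itemize}[leftmargin=1.8em,itemsep=0em]
\item The term $G\kappa_q$: since $\Lambda\ge G/\sigma$, we have $G\kappa_q\le\kappa_{\mathrm{lin}}\Lambda$.
\item The term $\tfrac{L_f}{2}\bigl((\chi^{\mathrm{cor}})^2-1\bigr)$: using $L_f\le 3\beta/4$ and $\kappa_q\Delta$ small, it is at most roughly $\tfrac34\beta\kappa_q\Delta$. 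Since $\Lambda\ge\beta\Delta/\sigma$, this is bounded by $\tfrac12\kappa_{\mathrm{lin}}\Lambda$.
\end{itemize}
Since $3(1-\theta)\ge 1+\tfrac12$, the two new terms are covered.

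For the fourth, I aim to show $C_{\lambda,p}^{\mathrm{cor}}\le C_{\lambda,p}$. It suffices to show $A_p^{\mathrm{cor}}\le A_p$, which is immediate, and $B_p^{\mathrm{cor}}\le B_p$. The latter reduces to
$$(L_f+L_c\Lambda)\,\kappa_q\Delta\;\le\;\rho M\kappa_{\mathrm{lin}}\Delta\,(1-\theta).$$
\emph{This is the step I expect to be the main obstacle.}

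My first attempt uses the second inequality as an upper bound on $\Lambda$, namely $\Lambda\le\Delta(\beta+\rho\sigma^2)/(3M)$, together with $L_c^2\Delta^2\le 3\beta/\rho$. This gives
$$L_c\Lambda\kappa_q\Delta\le\frac{\beta(\beta+\rho\sigma^2)}{2\rho M\sigma}.$$
This must then be compared with $\rho M\kappa_{\mathrm{lin}}\Delta$ using $M\ge\sigma$ and $\rho\ge 32\beta/\sigma^2$.

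If the direct comparison fails, the fallback is to avoid the termwise comparison. Instead, I would use the slack in $\rho\ge 8C_{\lambda,p}/\beta$, writing $\max\{A,B\}^2$ with $B_p^{\mathrm{cor}}\le\chi^{\mathrm{cor}}B_p$ and absorbing the factor $(\chi^{\mathrm{cor}})^2$ via the gap between $\rho M\kappa_{\mathrm{lin}}\Delta$ and $\rho M\kappa_d\Delta^3$. With the four inequalities established and the initialization unchanged, certification transfers under global regularity, since then both buffer conditions hold trivially.

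\emph{Strictness.} I would take $m=n=1$ and a globally regular scalar problem. The idea is a constraint $c$ with $c'\in[\sigma,M]$ whose curvature constant $L_c$ is large relative to $\sigma$, together with a bounded-gradient smooth objective, for instance $f=G\sin$ or a smoothed bounded-slope function.

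I would then fix $(\beta,\rho,x_0,\lambda_0)$ with $x_0$ feasible and $\lambda_0=0$, and show that no positive pair $(\widehat\Delta,\widehat\Lambda)$ satisfies \eqref{eq:safe}. The incompatibility comes from combining the first and third inequalities. The third inequality needs $\kappa_{\mathrm{lin}}\cdot3\widehat\Lambda\le 3\beta/8$, hence $\widehat\Lambda\le\beta/(4L_c)$. The first inequality needs $\widehat\Lambda\ge G/\sigma$. So choosing $G L_c>\beta\sigma/4$ gives a contradiction for every $\widehat\Delta$.

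For NR-LALM+SOC, the corresponding third inequality only requires $\kappa_d\Delta^2\cdot3\Lambda$ to be small, which can be arranged by taking $\Delta$ small. I would then exhibit explicit numbers satisfying all four corrected inequalities, checking in particular that $G\kappa_q$ and the $L_f$ term fit under $3\beta/8$; this is a finite numeric verification.
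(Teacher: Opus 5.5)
Your handling of the first three corrected inequalities is essentially the paper's: the same saving-versus-cost bookkeeping, with $\theta=(\kappa_q\Delta)^2$. There is one slip there. You bound the $L_f$ term by $\tfrac12\kappa_{\mathrm{lin}}\Lambda$, but the correct bound is about $0.81\,\kappa_{\mathrm{lin}}\Lambda$. This is harmless, because $3(1-\theta)\ge 2.9$. Two other steps, however, do not go through as proposed.

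\textbf{The comparison $B_p^{\mathrm{cor}}\le B_p$.} Your first attempt cannot close.
After you substitute $\Lambda\le\Delta(\beta+\rho\sigma^2)/(3M)$ and $L_c^2\Delta^2\le3\beta/\rho$, the left side has lost its factor $L_c\Delta$. The right side, $(1-\theta)\rho M\kappa_{\mathrm{lin}}\Delta$, keeps it. You would therefore need a lower bound on $L_c\Delta$, and none exists.
The fallback needs a spare factor $(\chi^{\mathrm{cor}})^2$ in $\rho\ge8C_{\lambda,p}/\beta$, which is not available. ``Absorbing it via the gap'' is just the termwise comparison again.
The missing point is homogeneity. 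Since $\kappa_{\mathrm{lin}}\Delta=\sigma\kappa_q\Delta$, dividing by $\kappa_q\Delta$ reduces the claim to $L_f+L_c\Lambda\le(1-\theta)\rho M\sigma$.
The third inequality gives $L_f/2+\tfrac32L_c\Lambda\le C_{\mathrm{mod}}\le3\beta/8$, so $L_f+L_c\Lambda\le3\beta/4$. You use the $\Lambda$ half of this yourself in the strictness paragraph.
On the other side, $M\ge\sigma$ and $\rho\ge32\beta/\sigma^2$ give $(1-\theta)\rho M\sigma\ge31\beta$. This is exactly how the paper argues.

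\textbf{Strictness.} Choosing $GL_c>\beta\sigma/4$ makes the corrected inequalities infeasible as well, so no numerical check can complete this plan.
\begin{enumerate}
\item You cannot take $\Delta$ small. The corrected second inequality forces $\Delta\ge G/\beta$, i.e.\ $\delta:=L_c\Delta/\sigma>1/4$.
\item The corrected first inequality then gives $L_c\Lambda\ge L_c(G+\beta\Delta)/\sigma>\beta/2$.
\item Since $G\kappa_q=GL_c/(2\sigma)>\beta/8$, the corrected third inequality leaves $u:=\rho M\kappa_d\Delta^3<\beta/4$.
\item The corrected fourth inequality reads $\rho\sigma^2\beta\ge32(B_p^{\mathrm{cor}})^2$, with $B_p^{\mathrm{cor}}\ge\beta+u+\chi^{\mathrm{cor}}L_c\Lambda$. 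It yields $u\ge4\delta^3(B_p^{\mathrm{cor}})^2/\beta$.
\item Tracking $\chi^{\mathrm{cor}}$, the term $3M\Lambda/(\beta+\rho\sigma^2)$ in the lower bound for $\Delta$, and the term $\rho M\kappa_d\Delta^4$ in $\Lambda$ pushes this lower bound above $0.26\beta$. That contradicts $u<\beta/4$.
\end{enumerate}

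The term that actually separates the two methods is $\rho M\kappa_{\mathrm{lin}}\Delta$. It is linear in $\Delta$ for NR-LALM but becomes the cubic $\rho M\kappa_d\Delta^3$ after correction.
The paper exploits it with $f=\sin x$, $c=2x-\cos x$, $\beta=50$, $\rho=2200$, and $\lambda_0=4$.
The initialization forces $\widehat\Lambda\ge4$, and the second inequality then forces $\widehat\Delta\ge9/250$. The fourth inequality at this fixed $\rho$ gives $32(1+66\widehat\Delta)^2\le44$, i.e.\ $\widehat\Delta<1/330$.

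If you prefer $\lambda_0=0$, combine three facts: $\widehat\Delta\ge G/\beta$, $\rho\ge32\beta/\sigma^2$, and $\kappa_{\mathrm{lin}}\rho M\widehat\Delta\le3\beta/8$. These are incompatible once $GL_cM>3\beta\sigma^2/128$. The same example and corrected tuple still work in that case.
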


\begin{proof}
Set \(\vartheta=\kappa_q\Delta\), \(\tau_\rho=\rho/\beta\), and
\(\mu=M\kappa_{\mathrm{lin}}\Delta=M\sigma\vartheta\).  The fourth inequality
in \eqref{eq:safe} gives
\(\tau_\rho\ge(32/\sigma^2)(1+\tau_\rho\mu)^2\).
Since \(\sup_{t>0}t/(1+t\mu)^2=1/(4\mu)\) and Assumption~\ref{ass:smooth}
gives \(M\ge\sigma\) when \(m\ge1\),
\(\mu\le\sigma^2/128\), \(\vartheta\le1/128\), and
\(\kappa_d\Delta^2=\vartheta^2\kappa_{\mathrm{lin}}\).  Thus the first corrected inequality is weaker, and the second is unchanged.

Let \(T_d=\kappa_{\mathrm{lin}}(3\Lambda+\rho M\Delta)\) and
\(W_d=(\rho/2)\kappa_{\mathrm{lin}}^2\Delta^2\).  The first and third inequalities in \eqref{eq:safe} give, respectively,
\(T_d\ge3G\kappa_q+3\beta\vartheta\) and \(L_f\le3\beta/4\).  Hence
\begin{equation*}
 C_{\mathrm{mod}}-C_{\mathrm{mod}}^{\mathrm{cor}}
 =(1-\vartheta^2)T_d+(1-\vartheta^4)W_d-G\kappa_q
   -\frac{L_f}{2}(2\vartheta+\vartheta^2)
 \ge0,
\end{equation*}
where the last inequality uses \(3(1-\vartheta^2)\ge1\) and
\(3(1-\vartheta^2)\ge(3/8)(2+\vartheta)\).

Finally, put \(D_d=\rho M\kappa_{\mathrm{lin}}\Delta\) and
\(H_d=L_f+L_c\Lambda\).  The third and fourth inequalities in
\eqref{eq:safe} imply
\(H_d\le\beta\), \(\rho\ge32\beta/\sigma^2\), and therefore
\((1-\vartheta^2)\rho M\sigma>H_d\).  Consequently,
$$
 A_p^{\mathrm{cor}}\le A_p,\qquad
 B_p-B_p^{\mathrm{cor}}
 =\vartheta[(1-\vartheta^2)\rho M\sigma-H_d]\ge0.
$$
Thus \(C_{\lambda,p}^{\mathrm{cor}}\le
C_{\lambda,p}\).  Together with the preceding comparisons, this proves that all four corrected inequalities hold for the same tuple.

For strictness, take the globally regular scalar problem
\(f(x)=\sin x\), \(c(x)=2x-\cos x\), and \(\mathcal U=\mathbb R\), with
the sharp constants \(G=L_f=L_c=\sigma=1\), \(M=3\), and
\(\underline f=-1\).  Since \(c'(x)\ge1\) and
\(c(0)<0<c(1/2)\), \(c\) has a unique root \(x_0\).  Choose
\(\lambda_0=\Lambda=4\), \(\Delta=1/20\), \(\beta=50\), and \(\rho=2200\).
Here \(\kappa_d\Delta^2=1/3200\),
\(\chi^{\mathrm{cor}}=41/40\),
\(A_p^{\mathrm{cor}}=16033/320\), and
\(B_p^{\mathrm{cor}}=17673/320<56\).  Direct substitution gives
\begin{align*}
 \frac{G+\beta\Delta+\rho M\kappa_d\Delta^4}{\sigma}
 &=\frac{22433}{6400}<4,\qquad
 \frac{G}{\beta}+\frac{3M\Lambda}{\beta+\rho\sigma^2}
 =\frac9{250}<\frac1{20},\\
 C_{\mathrm{mod}}^{\mathrm{cor}}&<\frac65<\frac{3\beta}{8},\qquad
 \frac{8C_{\lambda,p}^{\mathrm{cor}}}{\beta}<2008<\rho.
\end{align*}
Thus the corrected parameter and initialization conditions hold, and
localization is automatic.  Conversely, if the NR-LALM sufficient conditions
held for the same \((\beta,\rho,x_0,\lambda_0)\), they would require
\(\widehat\Lambda\ge4\).  The second inequality in \eqref{eq:safe} then gives
\(\widehat\Delta\ge9/250\).  On the other hand, with
\(\widehat A_p:=\beta+\rho M\kappa_{\mathrm{lin}}\widehat\Delta\), the fourth
inequality in \eqref{eq:safe} gives
\(44\ge32(1+66\widehat\Delta)^2\), and hence
\(\widehat\Delta\le
(\sqrt{11/8}-1)/66<1/330\).
Since \(9/250>1/330\), no such auxiliary pair exists.
\end{proof}

Proposition~\ref{prop:parameter-region} compares only the sufficient
conditions derived above, not the actual convergence domains of the two methods.  We now transfer the convergence and complexity results of Sections~\ref{sec:deterministic} and~\ref{sec:stochastic} to NR-LALM+SOC.

\begin{corollary}
\label{cor:corrected-transfer}
Under Assumption~\ref{ass:smooth}, suppose the parameters satisfy
\eqref{eq:safe} with  $\kappa_{\mathrm{lin}}$, $C_{\mathrm{mod}}$, and $C_{\lambda,p}$ replaced by $\kappa_d\Delta^2$, $C_{\mathrm{mod}}^{\mathrm{cor}}$, and
$C_{\lambda,p}^{\mathrm{cor}}$, respectively, and the initialization
conditions in Assumption~\ref{ass:safe} hold.  Use the other constants from
Table~\ref{tab:corrected-constants}.

\medskip
\noindent\textup{(i)}
\leanDeclTag{Corollary_4_2.lean}{1836}
\leanDeclTagExtra{Corollary_4_2.lean}{1865}
\leanDeclTagEnd
If
\eqref{eq:corrected-det-buffer} holds, deterministic NR-LALM+SOC has
\(\mathcal O(\varepsilon^{-2})\) iteration, first-order-oracle, primal-solve,
and correction-solve complexities.

\medskip
\noindent\textup{(ii)}
\leanDeclTag{Corollary_4_2.lean}{4788}
\leanDeclTagExtra{Corollary_4_2.lean}{7246}
\leanDeclTagEnd
If, in addition,
Assumption~\ref{ass:stoch} holds, stochastic NR-LALM+SOC with the corrected projected-SPIDER parameters has \(\mathcal O(\varepsilon^{-3})\) stochastic-gradient complexity and \(\mathcal O(\varepsilon^{-2})\) constraint/Jacobian evaluation and solve complexities under almost-sure admissibility.  Under \eqref{eq:corrected-stoch-buffer}, safeguarded restarts
give the same orders in expectation.

\medskip
\noindent\textup{(iii)}
\leanDeclTag{Corollary_4_2.lean}{8415}
\leanDeclTagExtra{Corollary_4_2.lean}{8674}
\leanDeclTagEnd
For deterministic NR-LALM+SOC, if \eqref{eq:corrected-det-buffer} holds, \(\mathcal S_{\mathrm{det}}^{\mathrm{cor}}\) is compact, and the function \(\mathcal E\) defined in \eqref{eq:KLenergy} has the KL property at every point of the cluster set of \((x_k,\lambda_k,p_{k-1})\), then the primal--dual
trajectory has finite length and converges to a KKT pair.
\end{corollary}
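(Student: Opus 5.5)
The plan is to rerun the chain Lemma~\ref{lem:invariant} $\to$ Lemmas~\ref{lem:primaldescent}--\ref{lem:dualinc} $\to$ Theorems~\ref{thm:lyap}--\ref{thm:localization} $\to$ Lemma~\ref{lem:kktres} $\to$ Theorem~\ref{thm:det} for NR-LALM+SOC. Along the way, each occurrence of the pair $(d_k,\ x_{k+1}-x_k=p_k)$ is replaced by $(d_k^{\mathrm{cor}},\ x_{k+1}-x_k=p_k+q_k)$, and the bounds \eqref{eq:corrected-error-bounds} are used in place of \eqref{eq:linerrorbounds}.

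The key structural fact is the corrected identity $\grad f(x_k)+\grad c(x_k)\lambda_{k+1}+\beta p_k=\rho\grad c(x_k)d_k^{\mathrm{cor}}$. It has exactly the form of \eqref{eq:identity}, with $\norm{d_k^{\mathrm{cor}}}\le\kappa_d\Delta^2\norm{p_k}^2$. Hence the proof of Lemma~\ref{lem:invariant} goes through verbatim with $\kappa_{\mathrm{lin}}$ replaced by $\kappa_d\Delta^2$, since the step equation and \eqref{eq:p-resolvent} are unchanged. For localization, I would repeat the induction of Theorem~\ref{thm:localization} with the buffer \eqref{eq:corrected-det-buffer}. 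If $x_k\in\mathcal S^{\mathrm{cor}}_{\mathrm{det}}$ and $\norm{p_k}\le\Delta$, then $z_k$ and $x_{k+1}$ lie within $\chi^{\mathrm{cor}}\Delta$ of $x_k$, so both segments lie in $\mathcal U$. The base bound on $\Phi_1$ uses $f(x_1)\le f(x_0)+G\chi^{\mathrm{cor}}\Delta$, which produces $\overline\Phi_1^{\mathrm{cor}}$.

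The main obstacle is the primal-descent step, Lemma~\ref{lem:primaldescent}, because $x_{k+1}\ne x_k+p_k$. I would expand $f(x_{k+1})\le f(x_k)+\ip{\grad f(x_k)}{p_k+q_k}+(L_f/2)\norm{p_k+q_k}^2$. Then $\abs{\ip{\grad f(x_k)}{q_k}}\le G\kappa_q\Delta\norm{p_k}\cdot\norm{p_k}/\Delta$. More carefully, $\norm{q_k}\le\kappa_q\norm{p_k}^2$ gives a term $\le G\kappa_q\norm{p_k}^2\cdot(\norm{p_k}/1)$. To match the table constant $G\kappa_q$, I would bound it as $G\kappa_q\Delta\norm{p_k}^2$, or directly as the table states if $\Delta\le1$; this is one place to double-check. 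The quadratic term is at most $(L_f/2)(\chi^{\mathrm{cor}})^2\norm{p_k}^2$. The constraint part is identical to before, because $c(x_{k+1})=c(x_k)+\grad c(x_k)^\top p_k+d_k^{\mathrm{cor}}$. The model optimality condition is paired with $p_k$ only, so the resulting estimate is $(-\beta+C^{\mathrm{cor}}_{\mathrm{mod}})\norm{p_k}^2\le-(5\beta/8)\norm{p_k}^2$.

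Lemma~\ref{lem:dualinc} is then redone by differencing the corrected identity. The gradient and Jacobian differences now involve $\norm{x_k-x_{k-1}}\le\chi^{\mathrm{cor}}\norm{p_{k-1}}$, which yields $A_p^{\mathrm{cor}}$, $B_p^{\mathrm{cor}}$, and $C_{\lambda,p}^{\mathrm{cor}}$. Similarly, Lemma~\ref{lem:kktres} yields $C_s^{\mathrm{cor}}$. Theorem~\ref{thm:lyap} and Theorem~\ref{thm:det} then follow verbatim, and each iteration costs one extra $c,\grad c$ evaluation and one $m\times m$ solve, which proves (i).

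For (ii), I would repeat Lemma~\ref{lem:spider} with the displacement $\norm{x_k-x_{k-1}}\le\chi^{\mathrm{cor}}\norm{p_{k-1}}$, which gives the factor $L_s^2(\chi^{\mathrm{cor}})^2$, hence the corrected $b$. Lemmas~\ref{lem:coupled} and~\ref{lem:stochres} go through with the corrected constants, as do Theorems~\ref{thm:stochastic} and~\ref{thm:stochlocalization}, and Corollary~\ref{cor:restart} follows under \eqref{eq:corrected-stoch-buffer}.

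For (iii), I would reuse the proof of Theorem~\ref{thm:KL}. Descent \eqref{eq:KLdescent} holds with the corrected constants. The length control now uses $\norm{x_{k+1}-x_k}\le\chi^{\mathrm{cor}}\norm{p_k}$. The relative-error bound \eqref{eq:KLrelativeerror} holds with $C_s^{\mathrm{cor}}$. Finite length then follows from the same telescoping argument.
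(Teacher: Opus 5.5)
Your proposal is correct and follows the paper's own proof. You use the corrected identity with $\norm{d_k^{\mathrm{cor}}}\le\kappa_d\Delta^2\norm{p_k}^2$, the displacement factor $\chi^{\mathrm{cor}}$ in the gradient, Jacobian, and SPIDER differences, and the resulting corrected descent, multiplier-increment, residual, and KL relative-error estimates, which is exactly what the paper feeds back into Sections~\ref{sec:deterministic} and~\ref{sec:stochastic}.

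The step you flagged is not a problem. Cauchy--Schwarz with $\norm{q_k}\le\kappa_q\norm{p_k}^2$ gives $\ip{\grad f(x_k)}{q_k}\le G\kappa_q\norm{p_k}^2$ directly, which is precisely the table term; your extra factor of $\norm{p_k}$ was a slip, and no condition $\Delta\le1$ is needed. When only the two-segment admissibility is assumed, as in the almost-sure setting of (ii), expand $f$ in two steps through $z_k$ rather than along $[x_k,x_{k+1}]$. This gives the same constants, since $1+\kappa_q^2\Delta^2\le(\chi^{\mathrm{cor}})^2$.
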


\begin{proof}
After the tabulated substitutions, the corrected identity and
\eqref{eq:corrected-error-bounds} give, whenever $[x_k,z_k]\cup[z_k,x_{k+1}]\subset\mathcal U$,
\begin{align*}
 \cL_\rho(x_{k+1},\lambda_k)-\cL_\rho(x_k,\lambda_k)
 &\le(-\beta+C_{\mathrm{mod}}^{\mathrm{cor}})\norm{p_k}^2
 \le-\frac{5\beta}{8}\norm{p_k}^2,\\
 \sigma\norm{\lambda_{k+1}-\lambda_k}
 &\le A_p^{\mathrm{cor}}\norm{p_k}
 +B_p^{\mathrm{cor}}\norm{p_{k-1}}.
\end{align*}
Applying the deterministic estimates of Section~\ref{sec:deterministic} with these constants proves \textup{(i)}.  For the stochastic method, replacing $L_s^2$ by $L_s^2(\chi^{\mathrm{cor}})^2$ in the SPIDER estimate and using the
corrected constants gives the coupling, residual, and restart bounds of Section~\ref{sec:stochastic}, proving \textup{(ii)}.

For \textup{(iii)}, let
\(u_k^{\mathrm{cor}}=(x_k,\lambda_k,p_{k-1})\),
\(\mathcal E_k^{\mathrm{cor}}=\mathcal E(u_k^{\mathrm{cor}})\), and
\(\ell_k^{\mathrm{cor}}=\norm{p_{k-1}}+
\norm{\lambda_k-\lambda_{k-1}}\).  The corrected estimates give
\begin{align*}
 \mathcal E_k^{\mathrm{cor}}-\mathcal E_{k+1}^{\mathrm{cor}}
 &\ge\left(\frac{\beta}{4}
 -\frac{C_{\lambda,p}^{\mathrm{cor}}}{\rho}\right)
 (\norm{p_k}^2+\norm{p_{k-1}}^2),\\
 \norm{\grad\mathcal E(u_k^{\mathrm{cor}})}
 &\le\left(C_s^{\mathrm{cor}}+\frac{\beta}{2}+M+\frac1\rho\right)
 \ell_k^{\mathrm{cor}} .
\end{align*}
The first coefficient is at least \(\beta/8\).  These are precisely the sufficient-decrease and relative-error estimates used in Theorem~\ref{thm:KL}; uniformized KL therefore gives
\(\sum_k(\norm{p_k}+\norm{\lambda_{k+1}-\lambda_k})<\infty\).
Since \(\norm{x_{k+1}-x_k}\le\chi^{\mathrm{cor}}\norm{p_k}\), the corrected trajectory has finite length, and the multiplier update and stationarity estimate identify its limit as a KKT pair.
\end{proof}

Since \(q_k=\mathcal O(\norm{p_k}^2)\), the correction changes neither the complexity exponents nor the first-order local iteration map.

\section{Numerical experiments}
\label{sec:numerics}

The experiments begin with a controlled example showing that the sufficient parameter conditions can hold simultaneously and confirming the quadratic and quartic constraint-model error orders. We then compare NR-LALM and NR-LALM+SOC with representative methods on high-dimensional deterministic problems under a common stopping and timing protocol.  Finally, we evaluate the stochastic variants under matched stochastic-gradient budgets.  The source code is available at \url{https://github.com/bqliu815/NR-LALM}.

\subsection{Sufficient parameter regions and constraint-linearization errors}
\label{subsec:mechanism-verification}

We illustrate the sufficient parameter conditions and the predicted orders of
the constraint-linearization errors on the following globally regular problem:
\begin{equation}
 f(x)=\sin\big((Bx)_1\big),\qquad
 c(x)=U\big(2Bx-\cos(Bx)\big),\qquad
 BB^\top=U^\top U=I_m,
 \label{eq:mechanism-problem}
\end{equation}
where $B\in\R^{m\times n}$, $U\in\R^{m\times m}$, and the cosine is
applied componentwise.  We take $(n,m)=(100,20)$.  If $r$ denotes the
unique root of $2r-\cos r=0$, then $x_0=B^\top(r\bm{1}_m)$ is feasible.
The global regularity constants may be chosen as
$G=L_f=L_c=\sigma=1$, $M=3$, and $\underline f=-1$.

Figure~\ref{fig:mechanism-verification}\textup{(a)} displays the pairs
$(\Delta,\Lambda)$ satisfying the sufficient conditions for
$\beta=3\times10^6$, $\rho=1.92\times10^8$, and
$\norm{\lambda_0}=1049$ over
$[10^{-5},0.018]\times[1049,50000]$.  Both sets are nonempty, and the
NR-LALM set is strictly contained in the NR-LALM+SOC set on the displayed
grid.  Panel \textup{(b)} plots the median base and corrected
constraint-linearization errors over $32$ unit directions and $12$
logarithmically spaced step norms in $[0.01,0.2]$.  Fitting each direction
separately gives median log--log slopes of $2.001$ and $4.022$, respectively,
with $R^2>0.9999$ for every fit.  These values agree with the second- and
fourth-order bounds in \eqref{eq:linerrorbounds} and
\eqref{eq:corrected-error-bounds}.

\begin{figure}[H]
\centering
\includegraphics[width=0.62\textwidth]
{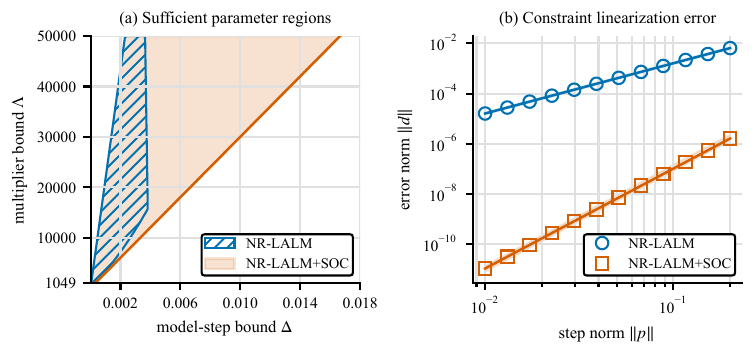}
\caption{Parameter regions and error orders.
\textup{(a)} Pairs satisfying the sufficient conditions for NR-LALM
(hatched) and NR-LALM+SOC (shaded).
\textup{(b)} Median constraint-linearization errors over $32$ directions;
bands show interquartile ranges, and solid lines show log--log fits.}
\label{fig:mechanism-verification}
\end{figure}

\FloatBarrier
\Needspace{8\baselineskip}

\subsection{Deterministic performance on high-dimensional data}
\label{subsec:deterministic-comparison}

We use the $15$ binary-classification data sets from the Library for Support Vector Machines (LIBSVM)~\cite{ChangLin2011} satisfying $n\ge2561$ and having compressed size at most $5$ GiB.  For larger data sets, we retain at most $20{,}000$ observations.  After a stratified $80/20$ split, the training feature vectors are normalized and used to define
\begin{equation}
 \min_{x\in\R^n}
 \frac{1}{N}\sum_{i=1}^N
 \log\!\left(1+\exp(-y_i d_i^\top x)\right)
 \quad\text{subject to}\quad
 Ax=b,\qquad \frac{\norm{x}^2-1}{2}=0.
 \label{eq:libsvm-constrained-logistic}
\end{equation}
The ten rows of $A$ have disjoint signed supports of cardinality $256$ and satisfy $AA^\top=I_{10}$.  We set $b=0.5e_1$ and use the common initial pair
$$
x_0=A^\top b+\sqrt{1-\norm{b}^2}\,v,\qquad \lambda_0=0,
$$
where $v$ is a unit vector in $\ker(A)$.  Thus, $x_0$ is feasible and satisfies the linear independence constraint qualification.

We compare NR-LALM, NR-LALM+SOC, L-AL~\cite{BourkhissiNecoara2025}, and the Interior Point OPTimizer (IPOPT)~\cite{WachterBiegler2006}. IPOPT uses its limited-memory Hessian approximation and the Parallel Direct Sparse Solver (PARDISO) from the Intel oneAPI Math Kernel Library~\cite{SchenkEtAl2001}. The three linearized methods use $\rho=100$, the same initial proximal coefficient $\beta_0=10$,
and the same $11$-dimensional Woodbury implementation.  These parameters were selected on separate development problems and then fixed for all reported data sets.  The method-specific acceptance tests and numerical safeguards are provided in the public code. For each method and data set, we perform eight single-thread runs on identical Intel Xeon Platinum 8558P nodes, with the execution order balanced across repeats.  Each run is limited to $200$ iterations and $1800$ seconds.  We report the median elapsed time to the first iterate satisfying $\mathcal R(x_k,\lambda_k)^2\le10^{-8}$, as measured by the common residual
evaluator.  Data loading and instance construction are excluded.  In
Table~\ref{tab:libsvm-stage-b-timing}, boldface indicates the row minimum.

\begin{table}[!t]
\centering
\caption{Median first-hit time (seconds).}
\label{tab:libsvm-stage-b-timing}
\scriptsize
\setlength{\tabcolsep}{2.8pt}
\renewcommand{\arraystretch}{0.88}
\begin{tabular}{@{}lrrrrrr@{}}
\toprule
Data set & $N$ & $n$ & NR-LALM & NR-LALM+SOC & L-AL & IPOPT \\
\midrule
avazu-app
& $16{,}000$ & $1{,}000{,}000$
& \textbf{0.71} & 0.88 & 0.93 & 9.05 \\
avazu-site
& $16{,}000$ & $1{,}000{,}000$
& 0.78 & \textbf{0.52} & 1.11 & 9.24 \\
criteo
& $16{,}000$ & $1{,}000{,}000$
& 0.75 & \textbf{0.38} & 0.88 & 9.09 \\
duke breast-cancer
& $35$ & $7{,}129$
& 0.047 & \textbf{0.030} & 0.055 & 0.22 \\
gisette
& $4{,}800$ & $5{,}000$
& 3.86 & 2.43 & 3.98 & \textbf{2.30} \\
kdd2010 (algebra)
& $16{,}000$ & $20{,}216{,}830$
& 33.6 & \textbf{22.1} & 39.9 & 379.9 \\
kdd2010 (bridge-to-algebra)
& $16{,}000$ & $29{,}890{,}095$
& 53.3 & \textbf{29.3} & 61.9 & 574.4 \\
kdd2010 raw (bridge-to-algebra)
& $16{,}000$ & $1{,}163{,}024$
& 0.79 & \textbf{0.75} & 1.12 & 11.2 \\
kdd2012
& $16{,}000$ & $54{,}686{,}452$
& 57.0 & \textbf{33.2} & 78.7 & 807.5 \\
leukemia
& $31$ & $7{,}129$
& 0.038 & \textbf{0.036} & 0.051 & 0.23 \\
news20.binary
& $15{,}997$ & $1{,}355{,}191$
& 3.23 & \textbf{1.22} & 3.95 & 22.9 \\
rcv1.binary
& $16{,}000$ & $47{,}236$
& 0.25 & \textbf{0.16} & 0.29 & 0.97 \\
real-sim
& $16{,}000$ & $20{,}958$
& 0.17 & \textbf{0.12} & 0.20 & 0.45 \\
url
& $16{,}000$ & $3{,}231{,}961$
& 4.09 & 5.77 & \textbf{3.38} & 33.5 \\
webspam
& $16{,}000$ & $16{,}609{,}143$
& 42.0 & \textbf{26.0} & 48.5 & 201.0 \\
\bottomrule
\end{tabular}
\end{table}

All four methods reach the stopping threshold in every run on all $15$ data sets.  The proposed methods achieve the lowest median first-hit time on $13$ data sets, with NR-LALM+SOC being fastest on $12$.  Each proposed method is faster than L-AL on $14$ data sets, with geometric-mean speedups of $1.19$ for NR-LALM and $1.71$ for NR-LALM+SOC.  On the four largest problems, whose dimensions range from $16.6$ million to $54.7$ million, NR-LALM+SOC is the fastest method, with speedup factors ranging from $1.81$ to $2.37$ over L-AL and from $7.73$ to $24.3$ over IPOPT.  Across all $15$ data sets, the geometric means of the IPOPT-to-NR-LALM and IPOPT-to-NR-LALM+SOC first-hit-time ratios are $6.64$ and $9.48$, respectively.

\subsection{Stochastic performance under matched gradient budgets}
\label{subsec:stochastic-comparison}

Following \cite{BerahasBollapragadaGupta2025}, we consider stochastic multiclass logistic regression on the LIBSVM covtype and MNIST data sets~\cite{ChangLin2011}. For observations \((a_j,y_j)\), class vectors \(w_i\in\R^{n_f}\), and
\(x=(w_1,\ldots,w_{K_c})\in\R^{K_c n_f}\), the problem is
\begin{equation}
\begin{aligned}
 \min_x\quad
 f(x)&:=\frac1N\sum_{j=1}^N
 \left[\log\!\left(\sum_{i=1}^{K_c}\exp(a_j^\top w_i)\right)
       -a_j^\top w_{y_j}\right],\\
 \text{subject to}\quad
 c_i(x)&:=\frac{\norm{w_i}^2-1}{2}=0,
 \qquad i=1,\ldots,K_c.
\end{aligned}
 \label{eq:stochastic-multiclass-sphere}
\end{equation}
Only objective component gradients are sampled; the constraints and their Jacobians are evaluated exactly.  With a bias feature, \((N,n,m)\) equals
\((581{,}012,385,7)\) for covtype and \((60{,}000,7{,}810,10)\) for MNIST.
All methods use the same preprocessed initial point and \(\lambda_0=0\), with
preprocessing excluded from the algorithmic budgets.

We compare NR-LALM, NR-LALM+SOC, MLALM~\cite{ShiWangWang2025}, and stochastic
sequential quadratic programming (S-SQP)~\cite{BerahasEtAl2021}.  To avoid
dependence on the returned multiplier, we use the full-data residual
$$
\mathcal R_{\min}^2(x)
:=\min_{\lambda\in\R^m}
\left\{\norm{\grad f(x)+\grad c(x)\lambda}^2+\norm{c(x)}^2\right\}.
$$
It gives a common primal-point comparison, but
\(\mathcal R_{\min}^2(x)\le\mathcal R(x,\lambda)^2\) for every \(\lambda\), so
it does not assess the multiplier returned by a method. The proposed methods share samples and use projected SPIDER with
\((\rho,\beta)=(300,48)\), selected on separate development runs and fixed for
both data sets; the baselines use the parameterizations in their cited
references.  Each method receives \(2^{18}\) stochastic objective-gradient
evaluations per run.  Figure~\ref{fig:stochastic-kkt-residual} reports
unsmoothed arithmetic means over ten runs; common full-data evaluations are
excluded from this budget.  Complete settings are provided in the public code.

\begingroup
\setlength{\intextsep}{4pt}
\begin{figure}[H]
\centering
\includegraphics[width=0.50\textwidth]
{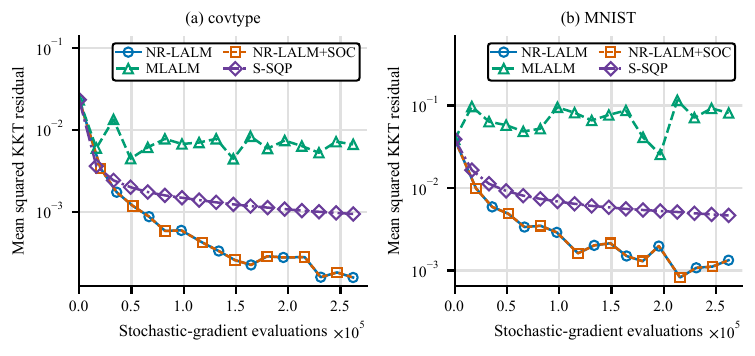}
\caption{Mean \(\mathcal R_{\min}^2\) over ten runs on \textup{(a)} covtype and \textup{(b)} MNIST.}
\label{fig:stochastic-kkt-residual}
\end{figure}
\endgroup

Both proposed methods outperform both baselines at all \(16\) post-initial checkpoints. Their final mean \(\mathcal R_{\min}^2\) values on covtype and MNIST are \(1.95\times10^{-4}\) and \(1.27\times10^{-3}\), versus \(9.22\times10^{-4}\) and \(4.66\times10^{-3}\) for the best baseline, reductions by factors of about \(4.7\) and \(3.7\). The proposed curves nearly coincide. Final mean feasibility is below \(10^{-6}\), whereas least-squares stationarity is \(1.37\times10^{-2}\) and \(3.48\times10^{-2}\); hence stationarity dominates \(\mathcal R_{\min}^2\). Figure~\ref{fig:mechanism-verification}\textup{(b)} isolates SOC's higher-order constraint effect.

\FloatBarrier

\section{Conclusion}
\label{sec:conclusion}

We introduced NR-LALM, combining a linearized primal step with the classical nonlinear-residual multiplier update. Our analysis controls the second-order constraint-linearization error and establishes bounded multipliers, trajectory localization, and $\mathcal O(\varepsilon^{-2})$ deterministic iteration and first-order-oracle complexity with fixed parameters. With safeguarded restarts, projected SPIDER achieves expected $\mathcal O(\varepsilon^{-3})$ stochastic-gradient and $\mathcal O(\varepsilon^{-2})$ constraint/Jacobian evaluation complexities under local regularity. The optional SOC reduces the error from second to fourth order without changing these orders; compactness and the KL property give finite-length convergence of the deterministic primal--dual sequence to a KKT pair. Numerics confirm the predicted orders and favorable performance. The Lean formalization and public code support independent checking of the theory and experiments. Inequalities, inexact solves, adaptive parameters, and globalization remain open.

\FloatBarrier
\vspace{-0.5\baselineskip}

\section*{Acknowledgments}

Generative AI assisted manuscript preparation and parts of the mathematical and computational work. The authors verified all results and assume responsibility for all content.

\bibliographystyle{abbrv}
\bibliography{references}

\end{document}